\newtheorem{theorem}{Theorem}
\newtheorem{assumption}{Assumption}
\newtheorem{remark}{Remark}
\newtheorem{notation}{Notation}
\newtheorem{lemma}{Lemma}
\newtheorem{corollary}{Corollary}
\newcommand{\norm}[1]{\left\| #1 \right\|}
\newcommand{\abs}[1]{\left| #1 \right|}
\newcommand{\ceil}[1]{\left \lceil #1 \right \rceil}
\newcommand{\prt}[1]{\left( #1 \right)}
\newcommand{\cost}[1]{\mathrm{Cost}}
\newcommand{\Frob}[1]{\left \langle #1 \right \rangle_{\mathtt{F}}}
\newcommand{\bF}{\mathbb{F}}
\newcommand{\bN}{\mathbb{N}}
\newcommand{\bP}{\mathbb{P}}
\newcommand{\bR}{\mathbb{R}}
\newcommand{\bZ}{\mathbb{Z}}
\newcommand{\bT}{\mathbf{T}}
\newcommand{\ff}{\mathbf{f}}
\newcommand{\fc}{\mathbf{c}}
\newcommand{\cB}{\mathcal{B}}
\newcommand{\cF}{\mathcal{F}}
\newcommand{\cO}{\mathcal{O}}
\newcommand{\cI}{\mathcal{I}}
\newcommand{\cJ}{\mathcal{J}}
\newcommand{\cK}{\mathcal{K}}
\newcommand{\cN}{\mathcal{N}}
\newcommand{\dimObs}{d_O}
\newcommand{\ML}{\mathrm{ML}}
\newcommand{\vBar}{\bar v}
\newcommand{\vHat}{\hat{v}}
\newcommand{\vBarHat}{\hat{\bar{v}}}
\newcommand{\yTilde}[1]{\tilde{y}_{#1}}
\newcommand{\cBarTilde}{\tilde{\bar{C}}}
\newcommand{\Prob}[1]{\bP_{#1}}
\newcommand{\Ex}[1]{\E \left[ #1 \right]}
\newcommand{\Cov}{\overline{\mathrm{Cov}}}
\newcommand{\E}{\mathbb{E}}
\title[MLEnKF based on independent samples ]{Multilevel Ensemble Kalman Filtering based on a sample average of independent EnKF estimators}
\author[H. Hoel]{H{\aa}kon Hoel} \address[{H{\aa}kon Hoel}]{\newline
  Chair of Mathematics for Uncertainty Quantification, RWTH Aachen
  University, Aachen, Germany \newline (hoel@uq.rwth-aachen.de)}
\author[G. Shaimerdenova]{Gaukhar
  Shaimerdenova$^*$}\thanks{$^*$Corresponding author: G.Shaimerdenova
  (gaukhar.shaimerdenova@kaust.edu.sa)} \address[{Gaukhar
    Shaimerdenova}]{\newline Applied Mathematics and Computational
  Sciences, KAUST, Thuwal, Saudi Arabia \newline
  (gaukhar.shaimerdenova@kaust.edu.sa)}
\author[R. Tempone]{Ra\'ul Tempone} \address[{Raul Tempone}]{\newline
  Chair of Mathematics for Uncertainty Quantification, RWTH Aachen
  University, Aachen, Germany \newline (tempone@uq.rwth-aachen.de)
  \newline
  \and
  \newline
  Applied Mathematics and Computational Sciences, KAUST, Thuwal, Saudi Arabia \newline (raul.tempone@kaust.edu.sa)}
\begin{document}

\begin{abstract}
  We introduce a new multilevel ensemble Kalman filter method (MLEnKF)
  which consists of a hierarchy of independent samples of ensemble
  Kalman filters (EnKF). This new MLEnKF method is fundamentally
  different from the preexisting method introduced by Hoel, Law and
  Tempone in 2016, and it is suitable for extensions towards
  multi-index Monte Carlo based filtering methods.  Robust theoretical
  analysis and supporting numerical examples show that under
  appropriate regularity assumptions, the MLEnKF method has better
  complexity than plain vanilla EnKF in the
  large-ensemble and fine-resolution limits, for weak approximations
  of quantities of interest. The method is developed for
  discrete-time filtering problems with finite-dimensional state
  space and linear observations polluted by additive Gaussian noise.

 \bigskip
\noindent \textbf{Key words}: Monte Carlo, multilevel, convergence
rates, Kalman filter, ensemble Kalman filter.

\noindent \textbf{AMS subject classification}: 65C30, 65Y20. 

\end{abstract}

\maketitle

\section{Introduction}
We develop a new multilevel ensemble Kalman filter method (MLEnKF) for
the setting of finite-dimensional state space and discrete-time
partial observations polluted by additive Gaussian noise. Our method
makes use of recent hierarchical variance-reduction
techniques~\cite{heinrich2001,giles2008,abdo2016} to improve the
asymptotic efficiency of weak approximations of filtering
distributions compared to standard ensemble Kalman
filtering (EnKF). We consider settings where solutions of
nonlinear dynamics models must be approximated by numerical methods. 

The herein introduced MLEnKF method consists of a hierarchy of
independent samples of pairwise-coupled EnKF estimators where,
in particular, the Kalman gains of every EnKF sample thus also are
independent. Our method is fundamentally
different from the ``canonical" MLEnKF~\cite{hoel2016}, which consists
of a hierarchy of coupled ensembles on different resolution levels,
all sharing one global ``multilevel'' Kalman gain.

The motivations for developing the new MLEnKF method are
threefold. First, the method is closer to classic EnKF, and we
therefore believe it will be easier to implement for
practitioners. Second, imposing slightly stricter regularity
assumptions, we can prove better asymptotic efficiency results for
this method than for the ``canonical'' one. And third, creating a
rigorous convergence theory for the new MLEnKF method is a stepping
stone towards a multi-index Ensemble Kalman filtering (MIEnKF) method;
See~\cite{abdo2018} for highly efficient approximations of
McKean--Vlasov dynamics by the multi-index Monte Carlo method, and
Appendix~\ref{app:extension} for a sketch of the said extension to
MIEnKF.

The main theoretical contributions of this work are
Theorems~\ref{thm:enkfConv} and~\ref{thm:mlenkfConv}, which respectively
derive
$L^p$-convergence rates for weak approximations in the large-ensemble
and fine-numerical-resolution limits with EnKF. Theorem~\ref{thm:mlenkfConv} is
novel, and while Theorem~\ref{thm:enkfConv} is
similar to~\cite[Theorem 3.11]{hoel2016}; but, to the best of our
knowledge, this is the first fully proved convergence result for EnKF in the said
limits (i.e., in both limits simultaneously). Estimates for EnKF's and
MLEnKF's asymptotic computational cost versus accuracy for the
respective methods are provided in Corollaries~\ref{cor:enkf}
and~\ref{cor:mlenkf}, respectively. From these estimates we conclude
that MLEnKF asymptotically outperforms EnKF whenever
Assumptions~\ref{ass:psi} and~\ref{ass:psi2} hold.

\subsection{Literature review}
The EnKF method was first introduced in the seminal
work~\cite{evensen1994sequential}, and due to its ease of use and
impressive performance in high dimensions, it quickly became a popular
method for weather prediction, ocean-atmosphere science, and oil
reservoir
simulations~\cite{kalnay2003atmospheric,houtekamer2005atmospheric,aanonsen2009ensemble}.
The standard version of EnKF with perturbed observations, which is the
method we will study and extend to the multilevel setting in this
work, first appeared in~\cite{houtekamer1998data} and ensuing
analysis~\cite{burgers1998analysis} showed that adding artificial
noise to the observations may be viewed as a consistency step for
avoiding covariance deflation of the empirical filtering
distribution. $L^p$-convergence of the first and second sample moments in
the large-ensemble limit for EnKF was first treated by a short
argument in~\cite{mandel2011convergence} for the linear model setting
and the result was subsequently extended to a set of nonlinear
filtering problems by a more technical argument in~\cite{le2011large}
(in the sense of deriving $L^p$-convergence rates for weak
approximations of sufficiently smooth quantities of interest).

The multilevel Monte Carlo (MLMC) method was introduced for efficient
weak approximations of random fields in~\cite{heinrich2001} and for
stochastic differential equations in~\cite{giles2008}.  The MLEnKF
method was developed for finite-dimensional settings
in~\cite{hoel2016} and countable, infinite-dimensional settings
in~\cite{chernov2017multilevel}.  In~\cite{fossum2019assessment} a
multilevel hybrid EnKF method was developed for solving reservoir
history matching problems. Multilevel particle filters using a
multilevel-coupled resampling algorithm was introduced
in~\cite{jasra2017multilevel}. The multilevel transform particle
filter, applying optimal transportation mapping in the analysis step,
was treated in~\cite{gregory2016multilevel, gregory2017seamless}. In
the context of Bayesian inverse problems, multilevel sequential Monte
Carlo methods have been studied
in~\cite{beskos2017multilevel,beskos2018multilevel,moral2017multilevel,latz2018multilevel}
and a multilevel Markov Chain Monte Carlo method was developed
in~\cite{dodwell2015}. The multi-fidelity Monte Carlo
method~\cite{peherstorfer2016optimal} is a recent and close kin of
MLMC that differs from MLMC by having fixied its estimator's finest
resolution level and by having more extensive coupling of samples than
only pairwise. This often reduces the estimator's statistical error
even more effectively than MLMC. A multi-fidelity Monte Carlo EnKF
method was developed in~\cite{popov2020multifidelity}.

Under sufficient regularity, EnKF converges to the so-called
mean-field Kalman filter in the large-ensemble limit. In
nonlinear-dynamics or observation settings, however, the mean-field
Kalman filter is not equal to the Bayes
filter~\cite{law2016deterministic,le2011large,ernst2015analysis,rosic2013parameter}.
Due to this discrepancy between EnKF and the Bayes filter even in the
large-ensemble limit, due to the large uncertainty in the estimation
of the model error and due to the constraints imposed by challening
high-dimensional problems and limited computational budgets, a
considerable number of works have, instead of studying the
large-ensemble limit, focused on the large-time and/or continuous-time
limit of the fixed-ensemble-size EnKF
cf.~\cite{kelly2014well,tong2016nonlinear,schillings2017analysis,schillings2018convergence,blomker2019well,de2018long,lange2019continuous}.
However, a recurring problem for fixed-ensemble-size EnKF is to
determine how large the ensemble ought to be to equilibrate the model
error with the other error contributions (statistical error and
bias). And the large-ensemble limit convergence rates for EnKF and
MLEnKF that are presented in this work may be helpful for determining
the ensemble size of a finite-ensemble EnKF method.

\subsection{Organization of this work}
Section~\ref{sec:problem} describes the problem, the MLEnKF method,
and $L^p$-convergence results for EnKF and MLEnKF towards the
mean-field EnKF.  Section~\ref{sec:numerics} presents numerical
performance studies of EnKF and MLEnKF for two different problems.
Appendix~\ref{sec:MFEnKF} presents a brief overview of the mean-field
EnKF method. Appendix~\ref{sec:proofs} contains proofs of the main
theoretical results, Appendix~\ref{sec:dmfenkf} describes an
algorithm for obtaining pseudo-reference solutions for nonlinear filtering
problems, and 
Appendix~\ref{app:extension} sketches an extension from
MLEnKF to multi-index EnKF.

\section{Problem setting and main results}
\label{sec:problem}
\subsection{Problem setting}
Let $(\Omega, (\cF_t), \cF=\cF_\infty, \bP)$ denote a complete
filtered probability space, and for any $k \in \bN$ and $p\ge 1$, let
$L^p_t(\Omega,\bR^k)$ denote the space of
$\cF_t \backslash \cB^k-$measurable functions $u:\Omega \to \bR^k$
such that $\Ex{|u|^p}< \infty$. Here, $\cB^k$ represents the Borel
$\sigma$-algebra on $\bR^k$ and $u$ is
said to be $\cF_t \backslash \cB^k-$measurable if and only if 
$u^{-1}(B)\in \cF_t$ for all $B\in \cB^k$. For a given state-space dimension
$d \in \bN$ and initial data $u_0 \in \cap_{p\ge 2} L^p_0(\Omega, \bR^d)$, we
consider the discrete stochastic dynamics for $n=0,1,\ldots$
\begin{equation}\label{eq:hmmDynamics}
u_{n+1}(\omega) = \Psi_n(u_n, \omega), \qquad \omega \in \Omega,
\end{equation}
for a sequence of mappings $\Psi_n: \bR^d \times \Omega \to \bR^d$.
The dynamics is associated with the stochastic differential equation (SDE)
\begin{equation}\label{eq:SDE}
\Psi_n(u_n,\omega) = u_n + \int_{n}^{n+1} a(u_{t}) dt  + \int_{n}^{n+1} b(u_{t}) dW_{t}(\omega), 
\end{equation}
with coefficients $a:\bR^d \to \bR^d$,
$b: \bR^{d}\to \bR^{d \times d_W}$ and the driving noise
$W:[0,\infty)\times \Omega \to \bR^{d_W}$ denoting a $d_W$-dimensional
standard Wiener process. We further assume the coefficients $a$ and $b$ 
are sufficiently smooth so that 
\[
u_n \in L_n^2(\Omega, \bR^d) \implies u_{n+1}  \in L^2_{n+1}(\Omega, \bR^d),
\]
and we note, from now on suppressing the dependence on $\omega$ whenever
confusion is not possible, that $u_{n+1}$
may be expressed as a quasi-iterated mapping of $u_0$:
\[
u_{n+1} = \Psi_n\circ \Psi_{n-1} \circ \cdots  \circ\Psi_0(u_0).
\] 
%
Associated with the dynamics~\eqref{eq:hmmDynamics}, there exists a
series of noisy observations
\begin{equation}\label{eq:observations}
y_{n} = H u_{n} + \eta_n, \quad  n=1,2, \ldots,   
\end{equation}
where $H \in \bR^{\dimObs \times d}$ and $\eta_1, \eta_2, \ldots$ is a
sequence of independent and identically distributed (iid) random
variables satisfying $\eta_1 \sim N(0,\Gamma)$ with positive definite
covariance matrix $\Gamma \in \bR^{\dimObs\times \dimObs}$ and the
independence property $\{\eta_j\}_{j\ge1} \perp \{u_k\}_{k\ge0}$.
The filtration $\cF_t$ is the completion of the smallest
$\sigma$-algebra generated by $u_0$, $\{W_s\}_{s \in [0,t]}$ and
$\{\eta_{j}\}_{j=1}^{\lfloor t \rfloor}$, with $\lfloor t \rfloor = \max\{ k \in \bZ \mid k \le t\}$.
That is,
\[
\cF_t = \overline{ \sigma\Big( \sigma(u_0) \cup \sigma(\{W_s\}_{s \in [0,t]}) \cup \sigma(\{\eta_{j}\}_{j=1}^{\lfloor t \rfloor}) \Big) }.
\]
with $\overline{\sigma}$ denoting the completion of $\sigma$.

The series of observations up to time $k$ is denoted by
\begin{equation}\label{eq:obsSeq}
Y_k := \begin{cases}
  (y_1, y_2, \ldots ,y_k) & \text{if} \quad  k\ge 1,\\
  \emptyset & \text{if} \quad  k =0.
\end{cases}
\end{equation}
For notational convenicence and since no observations have been made
at time $0$, we defined $Y_0 =\emptyset$ above. In consistency with
the propery that $Y_0$ holds no information we write
$\Prob{u_{0} \mid Y_0}:=\Prob{u_{0}}$.
The Bayes filter is a sequential procedure for
determining the (conditional) prediction $\Prob{u_{k} \mid Y_{k-1}}$
and update $\Prob{u_k \mid Y_k}$. Assuming that the densities of said
distributions exist, the stochastic dynamics~\eqref{eq:hmmDynamics}
and the conditional independence $(u_{k}|u_{k-1}) \perp Y_{k-1}$ yield
the proportionality
\begin{equation*}\label{eq:bayesFilter1}
\begin{split}
\rho_{u_k\mid Y_{k-1}}(u) &= \int_{\bR^d} \rho_{u_k, u_{k-1} \mid Y_{k-1}}(u,v) dv\propto \int_{\bR^d} \rho_{u_{k} \mid u_{k-1}}(u) \rho_{u_{k-1} \mid Y_{k-1}}(v) \, dv,
\end{split}
\end{equation*}
and Bayesian inference implies that
\begin{equation*}\label{eq:bayesFilter2}
\begin{split}
  \rho_{u_k \mid Y_k}(u) & \propto \mathcal{L}_{u_k|y_{k}}(u) \rho_{u_k \mid Y_{k-1}}(u),
\end{split}
\end{equation*}
where the likelihood function is given by 
\[
  \mathcal{L}_{u_{k}|y_{k}}(u) 
  = \frac{\exp(-|\Gamma^{-1/2}(y_{k}-Hu )|^2/2)}{  \sqrt{(2\pi)^{p} |\mathrm{det}(\Gamma)}|}.
\]
In other words, if the updated distribution $\Prob{u_{k-1} \mid
  y_{k-1}}$ is known and suitable regularity assumptions hold,
then the prediction and updated distribution at the next
time can be computed up to a proportionality constant (although this
step may be computationally intractable).

In settings where $u_0$ is a Gaussian random variable and the dynamics
$\Psi$ is linear with additive Gaussian noise, the Kalman
filter~\cite{Kalman60} solves the above filtering problem exactly.
When $\Psi$ is nonlinear, however, it is often not possible to solve
the filtering problem exactly and one must resort to approximation
methods.  EnKF is a nonlinear and ensemble-based filtering method that
preserves that tends to be particularly efficient when the ``true''
dimension of a filtering problem is far smaller than the state-space
dimension.

For linear-Gaussian filtering problems, the empirical measure of EnKF
converges towards the exact (Bayes filter) density $\Prob{u_k\mid
  Y_k}$ in the large-ensemble limit~\cite{mandel2011convergence}.
More generally, for instance when $\Psi$ is nonlinear, EnKF converges
to the mean-field EnKF in the large-ensemble limit,
cf.~Section~\ref{sec:MFEnKF}. As a consequence of EnKF employing a
Gaussian-like update of its particles, the mean-field EnKF will in many cases
not be equal to the Bayes
filter~\cite{law2016deterministic,ernst2015analysis}. But, to the best of
our knowledge, there does not exist any thorough scientific comparison
of the mean-field EnKF and the Bayes Filter, and 
Figure~\ref{fig:contrProp} shows that for the nonlinear dynamics
$\Psi$ defined by the SDE
\begin{equation}\label{eq:nonlinearDynamicsFigure}
du=-(u+ \pi\cos(\pi u/5)/5)dt+\sigma dW
\end{equation}
and~\eqref{eq:hmmDynamics}, the dissipative/contractive properties of
the associated Fokker-Planck equation can produce prediction densities
for the respective filtering methods that are indistinguishable to the
naked eye.
\begin{figure}[htbp]
  \includegraphics[width=0.75\textwidth]{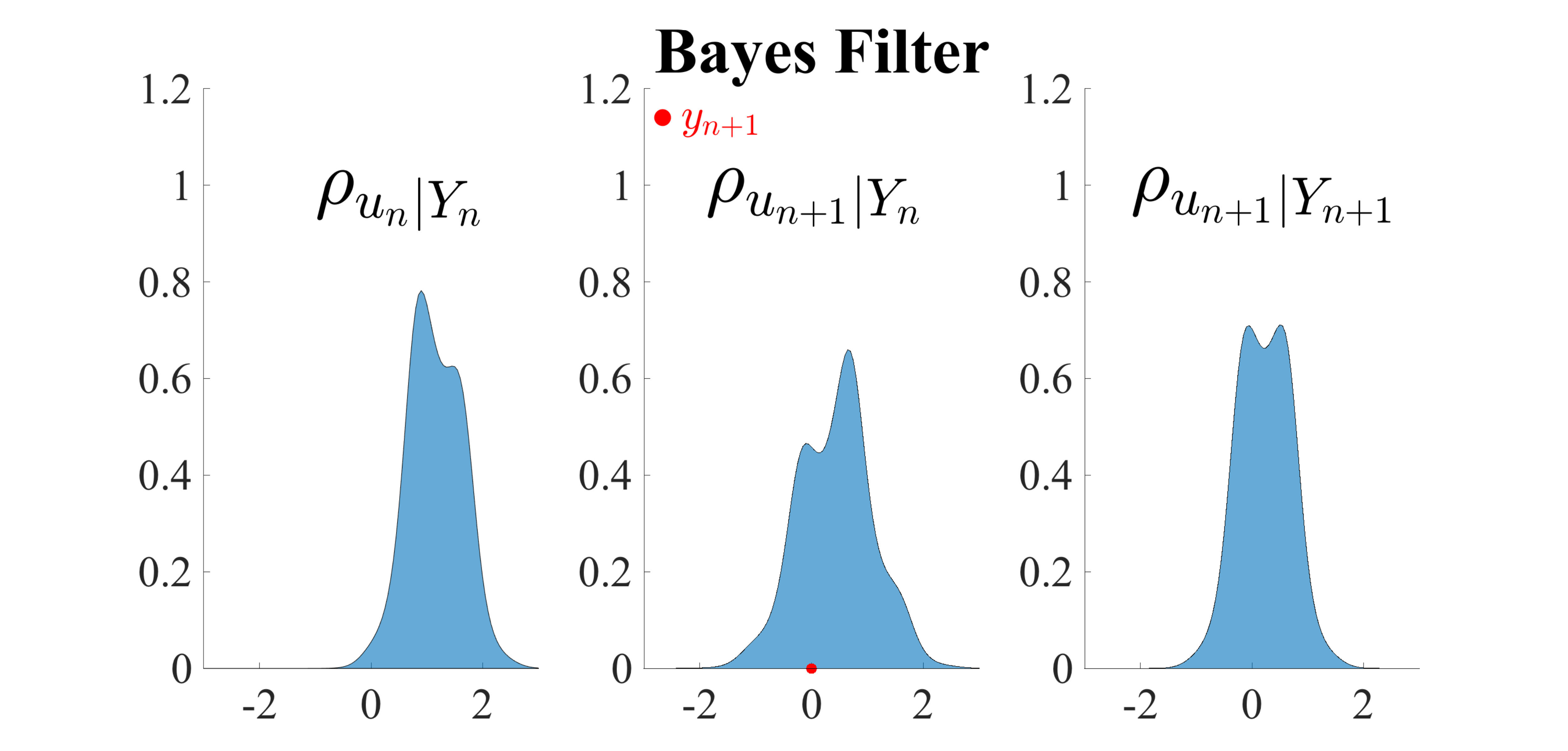}
  \includegraphics[width=0.75\textwidth]{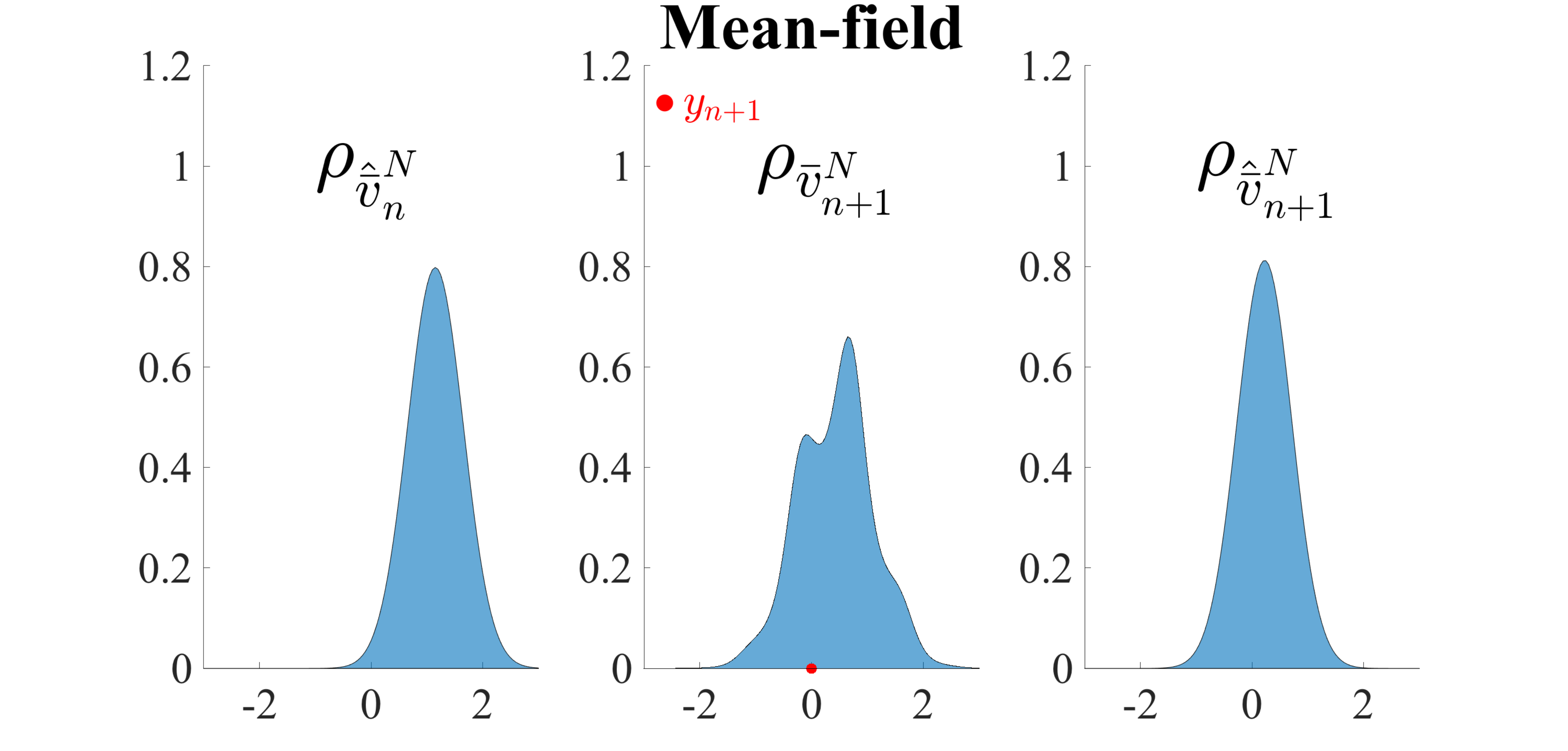}
  \caption{Illustration, based on the nonlinear dynamics~\eqref{eq:nonlinearDynamicsFigure}, of the contracting property which can produce
    almost identical prediction densities (middle panels) for the Bayes filter and
    MFEnKF even when the preceding updated densities differ
    notably.}
  \label{fig:contrProp}
\end{figure}

\subsubsection*{Objective}
For a given quantity of interest (QoI) $\varphi: \bR^d \to \bR$, our objective is
to construct an efficient filtering method for computing
\begin{equation}\label{eq:simplProblem}
\mathbb{E}^{\bar \mu_n}[ \varphi(u) ] = \int_{\bR^d} \varphi(u) \bar \mu_n(du),
\end{equation}
where $\bar \mu_n$ denotes the mean-field EnKF updated measure at time
$n$, cf.~Appendix~\ref{sec:MFEnKF}.  In the best of worlds, one would
rather seek a method computing the exact expectation of $\varphi$
with respect to the Bayes filter posterior, i.e.,
$\Ex{\varphi(u_n)\mid Y_n}$, but since the said posterior $\Prob{u_n
  \mid Y_n}$ is generally not attainable by EnKF-based
filtering methods, and since both EnKF and MLEnKF converge weakly towards mean-field
EnKF, cf.~Theorems~\ref{thm:enkfConv} and~\ref{thm:mlenkfConv}, we
will in this work focus on the simpler (but still very challenging)
goal~\eqref{eq:simplProblem}.

\begin{notation}
~\\
\begin{itemize}

\item For $f,g:(0,\infty) \to [0,\infty)$ the notation $f \lesssim  g$ implies there exists a
  $C>0$ such that
  \[
  f(x) \le  C g(x), \quad  \forall x \in (0,\infty).
  \]

\item The notation $f \eqsim  g$ implies that $f \lesssim g$ and $g\lesssim f$.

  
\item For $r,s \in \bN$,
  $|x|$ denotes the Euclidean norm of a vector $x \in \bR^s$
  and for $A \in \bR^{r\times s}$, $|A|_2:= \sup_{|x|=1} |Ax|$.

\item
For $\cF\backslash  \cB^d$-measurable functions $u:\Omega \to \bR^d$ and $p\ge 1$,
\[
\norm{u}_{p} := \norm{u}_{L^p(\Omega, \bR^d)} = \prt{\int_{\Omega} |u(\omega)|^p \, \mathbb{P}(d\omega)}^{1/p}.
\]

\item For any $\kappa \in \bN_0^r$, with $\bN_0 := \bN \cup \{0\}$,
  and any sufficiently smooth functions
  of the form $f: \bR^r \to \bR$ and $g:\bR^r \times \Omega \to \bR$, the respective
  $\partial^\kappa$-partial derivatives are defined by
\[
\partial^{\kappa} f(x) = \frac{\partial^{\abs{\kappa}_1 }f}{\partial x_1^{\kappa_1} \ldots \partial x_r^{\kappa_d} } (x) \quad 
\text{and} \quad 
\partial^{\kappa} g(x,\omega) = \frac{\partial^{\abs{\kappa}_1 }g}{\partial x_1^{\kappa_1} \ldots \partial x_r^{\kappa_d} } (x,\omega)
\]
for all $x\in \bR^r$ and $\bP-$almost all $\omega \in \Omega$, where
$|\kappa|_1 := \sum_{i=1}^r \kappa_i$. This extends to
vector-valued mappings of the form $f: \bR^r \to \bR^s$ by component-wise
partial derivatives:
$\partial^{\kappa} f(x) = (\partial^{\kappa} f_1(x), \partial^{\kappa}
f_2(x), \ldots, \partial^{\kappa} f_s(x))$, and so on.

\item For $k \in \bN_0$, $C^k_B(\bR^{r}, \bR^s)$
  denotes the space of $k$-times continuously differentiable functions
  $\eta:\bR^r \to \bR^s$ for which $\eta$ and all of its partial
  derivatives of order up to and including $k$ is uniformly bounded.

\item $C^{k}_P(\bR^r, \bR^s)$ denotes the space of
  $k$-times continuously differentiable functions whose
  partial derivatives of order up to and including $k$
  have polynomial growth.

\item For a mapping $\eta: \bR^{r} \to \bR^{s}$,
  its Jacobian is denoted $D\eta$, and its Hessian is
  $D^2\eta$.

\item $\bR_r[x_1,\ldots,x_d]$ denotes the set of polynomials in $d$
  variables that are of total degree smaller or equal to $r \in \bN_0$
  and have coefficients in $\bR$.
  


\end{itemize}

\end{notation}

\subsection{Numerical approximation of the stochastic dynamics}

We assume that for every time $n\ge 0$, there exists a collection of
progressively more accurate numerical solvers
$\{\Psi^{N}_n: \bR^d\times \Omega \to \bR^d\}_{N \in \bN}$ satisfying
the following assumptions:

\begin{assumption}\label{ass:psi}
~\\
The initial distribution $u_0 \in \cap_{p \ge 2} L^p_0(\Omega, \bR^d)$ and for any $n \in \bN_0$,
\begin{itemize} 

\item[(i)] for any $p\ge 2$, there exists a $c_p >0$ such that
  
  \[
    \norm{\Psi^{N}_n(u)}_p \leq c_p (1+\norm{u}_p) \quad \; \forall \big (N \in \bN  \quad \& \quad u \in \cap_{p \ge 2} L^p_n(\Omega, \bR^d) \, \big)\, ;
  \]

\item[(ii)] there exists an $\alpha>0$ and
  a set of mappings $\bF$ with $\bR_2[x_1,\ldots,x_d] \subset \bF \subset C^2_P(\bR^d,\bR)$
  such that if 
  \[
    \begin{split}
      \abs{\Ex{\varphi(u^N) - \varphi(u)}} \le c_\varphi N^{-\alpha} \quad \; \forall (N\ge 1 \quad \& \quad \varphi \in \bF),
    \end{split}
  \]
  for some $u \in \cap_{p\ge 2} L^p_n(\Omega, \bR^d)$ and
  $\{u^N\}_N \subset \cap_{p\ge 2} L^p_n(\Omega, \bR^d)$ and
  (observable-dependent constant) $c_{\varphi}>0$, then there exists another
  (observable-dependent constant) $\tilde c_{\varphi}>0$ such that 
  \[
    \abs{\Ex{\varphi( \Psi^N_n(u^N)) - \varphi(\Psi_n(u))}} \le \tilde c_\varphi N^{-\alpha} \quad\forall (N\ge 1 \quad \& \quad \varphi \in \bF) \, ;
  \]
  
\item[(iii)]  for any $p\ge 2$, there exists a $c_p>0$ such that 
  \[
    \norm{\Psi^{N}_n(u)- \Psi^{N}_n(v)}_p < c_{p}\norm{u-v}_p \quad \; \forall \big( N \in \bN  \quad \& \quad u,v \in \cap_{p \ge 2} L^p_n(\Omega, \bR^d) \, \big);
  \]

\item[(iv)] there exists a $c_3>0$ such that
  the computational cost of the numerical solution $\Psi^N_n$ is
  $\bP$-almost surely bounded by
  \[
   \mathrm{Cost}(\Psi^N_n(u)) \le c_{3} N
  \quad \; \forall \big( N \in \bN  \quad  \& \quad u \in \cap_{p \ge 2} L^p_n(\Omega, \bR^d) \, \big);
  \]

\end{itemize}
\end{assumption}

\begin{remark}\label{rem:firstOnSde}
  If the dynamics~\eqref{eq:hmmDynamics} is given by the
  SDE~\eqref{eq:SDE} where the coefficients $a:\bR^d \to \bR^d$, $b:
  \bR^{d}\to \bR^{d \times d_W}$ satisfy $\partial^{\kappa}a_j,
  \partial^{\kappa}b_{jk} \in C_B(\bR^d, \bR)$ for all $1\le j\le d$,
  $1 \le k \le d_W$ and $|\kappa|\ge1$, and if $\Psi^N_n$ denotes the
  Euler--Maruyama numerical solution of~\eqref{eq:SDE} using $N$
  uniform timesteps, then Assumption~\ref{ass:psi} holds with $\alpha=
  1$ and $\bF=C^{4}_P(\bR^d,\bR)$, cf.~\cite[Chapter 7]{talayGraham}
  and~\cite[Thm 14.5.2]{KloedenPlaten}. Noting that the
  uniformly-bounded constraint is only imposed on partial derivatives
  of the coefficients, and not the coefficients themselves, the stated
  convergence rates for Euler--Maruyama apply for instance to the
  nonlinear dynamics~\eqref{eq:nonlinearDynamicsFigure}.
\end{remark}

\subsection{EnKF} \label{sec:EnKF}

For a solver $\Psi^N$ with fixed resolution $N\ge1$ and a fixed
ensemble size $P \ge1$, the EnKF method consists of an ensemble of $P$
particles that are iteratively simulated forward and updated
in a way that can be viewed as a nonlinear extension of
Kalman filtering.
The evolution of the EnKF ensemble over the times $n=0,1,\ldots$ can be described
as follows:

We denote the updated ensemble at time $n$ by
$\vHat_{n,1:P}^{N,P} := \{ \vHat_{n,i}^{N,P}\}_{i=1}^P$
and the prediction ensemble at the same time by
$v_{n,1:P}^{N,P} := \{ v_{n,i}^{N,P}\}_{i=1}^P$.
At time $0$, $\vHat_{0,1:P}^{N,P} := \{ \vHat_{0,i}^{N,P}\}_{i=1}^P$,
consists of $P$ independent and $\Prob{u_0|Y_0}$-distributed particles,
where we assume the initial distribution can be sampled exactly. The
empirical measure induced by the ensemble $\vHat_{0,1:P}^{N,P}$ may
be viewed as the EnKF approximation of $\Prob{u_0|Y_0}$.  Given
an updated ensemble at time $n\ge0$, the prediction ensemble at time
$n+1$ is computed by simulating each particle forward
\[
v_{n+1,i}^{N,P} = \Psi^N_n(\vHat_{n,i}^{N,P}) \quad \text{for} \quad i =1,2,\ldots,P.
\]
Thereafter, the updated ensemble at time $n+1$
is computed through  assimilating the new measurement $y_{n+1}$
through the Kalman-filter-like and particle-wise formula
\[
\vHat_{n+1,i}^{N,P} = (I-K_{n+1}^{N,P} H) v_{n+1,i}^{N,P} + K_{n+1}^{N,P} \yTilde{n+1,i} \quad \text{for} \quad i =1,2,\ldots,P.
\]
Here
\[
K_{n+1}^{N,P}   = C^{N,P}_{n+1} H^\bT (H C^{N,P}_{n+1} H^\bT + \Gamma)^{-1}
\]
denotes the Kalman gain,
\begin{equation}\label{enkf:biasedCov}
  \begin{split}
C_{n+1}^{N,P} &= \Cov[v_{n+1}^{N,P}] := \sum_{i=1}^P \frac{v_{n+1,i}^{N,P} \prt{v_{n+1,i}^{N,P}}^\bT }{P} - 
\sum_{i=1}^P \frac{v_{n+1,i}^{N,P} }{P} \prt{\sum_{i=1}^P \frac{v_{n+1,i}^{N,P} }{P}}^\bT
\end{split}
\end{equation}
denotes the \emph{biased} sample covariance of the ensemble
$\vHat_{n,1:P}^{N,P}$,
\[ 
\yTilde{n+1,i}	 = y_{n+1} + \eta_{n+1,i}, \quad \text{for} \quad i =1,2,\ldots, P
\]
are iid perturbed observations with $\eta_{n+1,1} \sim N(0, \Gamma)$
and $\eta_{j,i} \perp u_k$ for all $j\ge 1$, $k \ge 0$.
Perturbed observations were originally introduced in \cite{burgers1998analysis} 
to correct an unwanted covariance-deflation feature
in the original formulation of EnKF~\cite{evensen1994sequential}.

We note that for all filtering methods considered in this work, the
iteratively augmented observation sequence $y_1,y_2, y_3, \ldots$ is
assumed to be given, i.e., \emph{non-random}. The sources of
randomness in the EnKF filter are therefore the driving noise
in the dynamics and the perturbed observations. For EnKF with
dynamics resolution $N$, randomness thus enter through
$\{\Psi_n^{N,P}\}_n$ and $\{\eta_{n,i}\}_{n,i}$. Observe further
that since the particles
$\vHat_{n,1}^{N,P}, \vHat_{n,2}^{N,P}, \ldots,\vHat_{n,P}^{N,P}$ are
identically distributed, the distribution of $\vHat_{n}^{N,P}$
will depend on the ensemble size $P$ and the model resolution $N$.

If Assumption~\ref{ass:psi} holds, then Theorem~\ref{thm:enkfConv} implies
that 
$\vHat_{n,1:P}^{N,P} \subset \cap_{p\ge 2} L^p_n(\Omega, \bR^d)$
for any $P\in \bN$, $N \in \bN$, and $n \in \bN_0$.
This ensures the existence of the EnKF empirical measure
\begin{equation*}
\mu_n^{N,P}(dv) = \frac{1}{P} \sum_{i=1}^P \delta(dv;\hat v_{n,i}^{N,P}),
\label{eq:emp}
\end{equation*}
with the Dirac measure 
\[
  \delta(dv;y) := \begin{cases}
    1 &  \text{if} \quad y \in dv \\
    0 & \text{otherwise}.
  \end{cases}
\]

\begin{notation}\label{notation:not2}
The expectation of a QoI $\varphi : \bR^d \to \bR$
with respect to the a probability measure $\mu: \cB^d \to [0,1]$
is denoted by
\begin{equation*}
\mu [\varphi] := \int_{\bR^d} \varphi(v) \, \mu(dv).
\label{eq:empObs}
\end{equation*}
\end{notation}
Note that the expectation with respect to the EnKF empirical measure
takes the form of a sample average
\[
\mu_n^{N,P} [\varphi] = \frac{1}{P} \sum_{i=1}^P \varphi(\vHat_{n,i}^{N,P}),
\]
and that $\mu_n^{N,P} [\varphi]$ in fact is a random variable since it is a
function of the ensemble's $P$ particles $\vHat_{n,i}^{N,P}$
(where $P$ obviously is a finite number). For the mean-field
measure $\bar \mu_n = \mu_{n}^{\infty,\infty}$, on the other hand,
$\bar \mu_n [\varphi]$ is a deterministic value;
this is a consequence of $P = \infty$.

\subsection{MLEnKF}\label{sec:mlenkfIntro}
The MLEnKF estimator that we introduce in this work is a sample
average of pairwise coupled EnKF estimators
$\{\mu_n^{N_\ell,P_\ell}[\varphi]\}_{\ell=0}^{L}$ on a hierarchy of
resolution levels $\{N_\ell\}_{\ell=0}^L \subset \bN$ and
ensemble-size levels $\{P_\ell\}_{\ell=0}^L\subset \bN$.
Here, $L$
refers to the finest resolution level of the MLEnKF estimator, and we
impose the following exponential-growth constraints on the resolution
and ensemble-size sequences:
\begin{equation*}
  P_{\ell+1}= 2P_{\ell} \quad \text{and}\quad  N_\ell \eqsim 2^{s \ell} \quad \text{for some } s>0.
\end{equation*}

Before writing the explicit form of the MLEnKF estimator, note that
since our goal, the value we want to approximate, is deterministic, 
cf.~Notation~\ref{notation:not2}, the following equality holds
\[
  \bar{\mu}_n[\varphi]=\mu_n^{\infty,\infty}[\phi] =
  \Ex{\mu_n^{\infty,\infty}[\phi]}.
\]
This motivates the approximation
\[
  \bar{\mu}_n[\varphi] \approx \Ex{\mu_n^{N_L,P_L}[\phi]} =
  \Ex{\mu_n^{N_{0},P_{0}}[\varphi] } +
  \sum_{\ell=1}^{L}\Ex{\mu_n^{N_\ell,P_\ell}[\varphi]-\mu_n^{N_{\ell-1},P_{\ell-1}}[\varphi]},
\]
where the last equality holds due to the linearity of the expectation
operator. In the expectations for $\ell \ge 1$ we seek to employ
pairwise coupling between the fine-level estimator
$\mu_n^{N_\ell,P_\ell}[\varphi]$ and the coarse-level estimator
$\mu_n^{N_{\ell-1},P_{\ell-1}}[\varphi]$ through associating the
driving noise and perturbed observations for each particle on level
$\ell$ uniquely to one particle on level $\ell-1$. As the number of
particles on the neighboring levels are not equal
($P_{\ell} = 2 P_{\ell-1}$), this is clearly not possible in the
current form of expectations of differences, so we replace
$\mu_n^{N_{\ell-1},P_{\ell-1}}[\varphi]$ by the average of two
identically distributed random variables
$\mu_n^{N_{\ell-1},P_{\ell-1},1}[\varphi]$ and
$\mu_n^{N_{\ell-1},P_{\ell-1},2}[\varphi]$. Importantly, these satisfy
\[
\Ex{\frac{\mu_n^{N_{\ell-1},P_{\ell-1},1}[\varphi] + \mu_n^{N_{\ell-1},P_{\ell-1},2}[\varphi] }{2}} = \Ex{\mu_n^{N_{\ell-1},P_{\ell-1}}[\varphi]} \quad k =1,2,
\]
which implies that the following equality holds
\[
\begin{split}
\Ex{\mu_n^{N_L,P_L}[\phi]} =  &\Ex{\mu_n^{N_{0},P_{0}}[\varphi] } 
  +
 \sum_{\ell=1}^{L}\Ex{\mu_n^{N_\ell,P_\ell}[\varphi]-\frac{\mu_n^{N_{\ell-1},P_{\ell-1},1}[\varphi] + \mu_n^{N_{\ell-1},P_{\ell-1},2}[\varphi]}{2}}.
\end{split}
\]
By approximating the $\ell$-th
the expectation by a sample average using $M_\ell$ iid pairwise-coupled samples
\[
\left\{ \mu_n^{N_\ell,P_\ell,m}[\varphi] - \frac{\mu_n^{N_{\ell-1},P_{\ell-1},1,m}[\varphi] + \mu_n^{N_{\ell-1},P_{\ell-1},2,m}[\varphi]}{2} \right\}_{m=1}^{M_\ell},
\]
we obtain the MLEnKF estimator
\begin{equation}\label{eq:telescsum}
\begin{split}
  \mu^{ML}[\varphi] :=   &\sum_{m=1}^{M_0}\frac{\mu_n^{N_0,P_0,m}[\varphi]}{M_0}\\
  &+
  \sum_{\ell=1}^{L} \sum_{m=1}^{M_\ell}\frac{\mu_n^{N_\ell,P_\ell,m}[\varphi]}{M_\ell}-\frac{ \Big(\mu_n^{N_{\ell-1},P_{\ell-1},1,m}+\mu_n^{N_{\ell-1},P_{\ell-1},2,m} \Big)[\varphi]}{2M_\ell}.
\end{split}
\end{equation}
As for classic MLMC estimators, the degrees of freedom $L$ and $\{M_\ell\}_{\ell=0}^L \subset \bN$
are determined through a constrained optimization problem where one
seeks to equilibrate the variance contribution from all of the
sample averages and the bias error cf.~Corollary~\ref{cor:mlenkf}.
The purpose of pairwise-coupled particles is to reduce the variance
of the MLEnKF estimator and thereby improve the efficiency of the 
method by reducing the number of samples $\{M_\ell\}$ needed to
control the approximation error.

To describe the coupling between $\mu_n^{N_\ell,P_\ell}$ and
$(\mu_n^{N_{\ell-1},P_{\ell-1},1}$, $\mu_n^{N_{\ell-1},P_{\ell-1},2})$,
we first introduce the following notation for the
particles of the three underlying EnKF ensembles: for $\quad i =1,2\ldots,P_\ell$,
\[
  \begin{split}
    \vHat_{n, i}^{\ell,\ff} &:= \vHat_{n, i}^{N_\ell,P_\ell} \\
    \vHat_{n, i}^{\ell,\fc} &:=
    \begin{cases}
      \vHat_{n, i}^{\ell,\fc_1}:= \vHat_{n, i}^{N_{\ell-1},P_{\ell-1},1}& \text{if} \quad i \le P_{\ell-1}\\
      \vHat_{n, i}^{\ell,\fc_2}:= \vHat_{n, i-P_{\ell-1}}^{N_{\ell-1},P_{\ell-1},2}& \text{if} \quad P_{\ell-1} <  i \le P_\ell,
      \end{cases}
  \end{split} 
\]
where $\vHat_{n, i}^{N_{\ell-1},P_{\ell-1},k}$ refers to the $i$-th
updated particle at time $n$ of the EnKF measure
$\mu_n^{N_{\ell-1},P_{\ell-1},k}$, for $k=1,2$.  The superscript $\ff$
refers to fine-level particles simulated with numerical resolution 
$N_\ell$ whereas $\fc$ refers to the coarse-level
particles simulated with numerical resolution $N_{\ell-1}$.  The
set of all the fine- and coarse-level particles are respectively
denoted by
\[
  \vHat_{n, 1:P_\ell}^{\ell,\ff}:=\{\vHat_{n,
    i}^{\ell,\ff}\}_{i=1}^{P_\ell} \quad \text{and} \quad \vHat_{n,
    1:P_\ell}^{\ell,\fc}:=\{\vHat_{n, i}^{\ell,\fc}\}_{i=1}^{P_\ell},
\]
with the EnKF ensemble $\vHat_{n, 1:P_\ell}^{\ell,\ff}$ inducing the empirical
measure $\mu_n^{\ell,\ff} := \mu_n^{N_{\ell},P_{\ell}}$.
The set of all the coarse-level particles is a union of two EnKF ensembles with
$P_{\ell-1}$ particles in each, namely  
$\vHat_{n, 1:P_\ell}^{\ell,\fc}=\vHat_{n, 1:P_{\ell-1}}^{\ell,\fc_1}
\cup \vHat_{n, 1:P_{\ell-1}}^{\ell,\fc_2}$, where
$\vHat_{n, 1:P_{\ell-1}}^{\ell,\fc_1}:=\{\vHat_{n,
  i}^{\ell,\fc}\}_{i=1}^{P_{\ell-1}}$ and
$\vHat_{n, 1:P_{\ell-1}}^{\ell,\fc_2}:=\{\vHat_{n,
  i}^{\ell,\fc}\}_{i=P_{\ell-1}+1}^{P_{\ell}}$.
The EnKF ensemble $\vHat_{n, 1:P_\ell}^{\ell,\fc_k}$ induces the empirical
measure $\mu_n^{\ell,\fc_k} := \mu_n^{N_{\ell-1},P_{\ell-1},k}$ for $k=1,2$.
By defining $\mu_{n}^{\ell,\fc}:=\prt{\mu_{n}^{\ell,\fc_1}+\mu_{n}^{\ell,\fc_2}}/2$ and imposing the convention
\[
  \mu_{n}^{0,\fc}=\mu_{n}^{0,\fc_1}=\mu_{n}^{0,\fc_2}=0 \quad \forall n \ge 0,
\]
the MLEnKF estimator~\eqref{eq:telescsum} can be written
\begin{equation}\label{eq:mlEstimator}
  \begin{split}
\mu_n^{\ML}[\varphi]=
  \sum_{\ell=0}^{L} \sum_{m=1}^{M_\ell}\frac{\mu_n^{\ell,\ff,m}[\varphi]- \mu_n^{\ell,\fc,m}[\varphi]}{M_\ell}
\end{split} 
\end{equation}
where the sequence $\{(\mu_n^{\ell,\ff,m}[\varphi], \mu_n^{\ell,\fc,m}[\varphi])\}_m$ are iid draws from the joint distribution of coupled random variables $\bP_{(\mu_n^{\ell,\ff}[\varphi], \mu_n^{\ell,\fc}[\varphi])}$.

\subsubsection{One prediction-update iteration of pairwise coupled ensembles}\label{sssec:alg}
For $\ell =0,1,\ldots, L$, the fine-level initial ensemble
$\{\vHat_{0, i}^{\ell,\ff}\}_{i=1}^{P_\ell}$ is independent and
identically $\Prob{u_0|Y_0}$-distributed and it is pairwise coupled to
the coarse-level initial ensemble through setting
$\vHat_{0, i}^{\ell,\ff}=\vHat_{0, i}^{\ell,\fc}$.  For
pairwise-coupled updated-state particles $\vHat_{n, i}^{\ell,\ff}$ and
$\vHat_{n, i}^{\ell,\fc}$ at time $n$, we denote by
the next-time prediction state respectively by 
$v^{\ell,\ff }_{n+1,i}$ and $v^{\ell,\fc }_{n+1,i}$.
The prediction state is obtained by
simulation of model dynamics on neighboring resolutions:
\begin{equation}\label{ml:prediction}
\left.\begin{split}
v^{\ell,\ff }_{n+1,i}&= \Psi^{N_\ell}_n(\hat{v}^{\ell,\ff }_{n,i}) \\
v^{\ell,\mathbf{c} }_{n+1,i} &= \Psi^{N_{\ell-1}}_n (\hat{v}^{\ell, \mathbf{c}}_{n,i})
\end{split}\right\} \quad i =1,\ldots, P_\ell.
\end{equation}
The simulations are pairwise coupled through sharing the same driving noise $W$.
Then the sample covariance matrices and Kalman gains for the
respective ensembles are computed by
\begin{equation*}
\begin{split}
C_{n+1}^{\ell, \ff}=\Cov [v^{\ell, \ff}_{n+1}], \qquad K_{n+1}^{ \ell, \ff }&=C_{n+1}^{\ell, \ff}H^\bT(HC_{n+1}^{\ell, \ff}H^\bT+\Gamma)^{-1},\\
C_{n+1}^{\ell, \mathbf{c_1} }=\Cov[v^{\ell, \mathbf{c_1} }_{n+1}], \qquad K_{n+1}^{\ell,\mathbf{c_1}}&= C_{n+1}^{\ell, \mathbf{c_1} }H^\bT(HC_{n+1}^{\ell, \mathbf{c_1} }H^\bT+\Gamma)^{-1},\\
C_{n+1}^{\ell, \mathbf{c_2} }=\Cov[v^{\ell, \mathbf{c_2} }_{n+1}], \qquad  K_{n+1}^{\ell, \mathbf{c_2} }&= C_{n+1}^{\ell, \mathbf{c_2} }H^\bT(HC_{n+1}^{\ell, \mathbf{c_2} }H^\bT+\Gamma)^{-1},
\end{split}
\end{equation*}
where the sample covariances are defined in a similar fashion as in for EnKF:
\begin{equation}\label{ml:biasedCov}
\begin{split}
\Cov[v^{\ell, \ff }_{n+1}]:&=\sum_{i=1}^{P_{\ell}} \frac{v^{\ell, \ff }_{n+1,i}\prt{v^{\ell, \ff }_{n+1,i}}^\bT }{P_{\ell}} - 
\sum_{i=1}^{P_{\ell}} \frac{v^{\ell, \ff}_{n+1,i} }{P_{\ell}} \prt{\sum_{i=1}^{P_{\ell}} \frac{v^{\ell,\ff }_{n+1,i} }{P_{\ell}}}^\bT, \\
\Cov[v^{\ell, \mathbf{c_j} }_{n+1}]:&=\sum_{i=1}^{P_{\ell-1}} \frac{v^{\ell, \mathbf{c_j} }_{n+1,i}\prt{v^{\ell, \mathbf{c_j} }_{n+1,i}}^\bT }{P_{\ell-1}} - 
\sum_{i=1}^{P_{\ell-1}} \frac{v^{\ell, \mathbf{c_j} }_{n+1,i} }{P_{\ell-1}} \prt{\sum_{i=1}^{P_{\ell-1}} \frac{v^{\ell, \mathbf{c_j} }_{n+1,i} }{P_{\ell-1}}}^\bT, \quad j=1,2.
\end{split}
\end{equation}
And the new observation $y_{n+1}$ is assimilated into the filter in the update step:
\begin{equation}\label{ml:update}
  \begin{split}
&\left.
  \begin{split}
\tilde{y}_{n+1,i}^{\ell}&=y_{n+1}+\eta_{n+1,i}^{\ell}  \\
\vHat_{n+1,i}^{\ell,\ff}&=(I-K_{n+1}^{ \ell, \ff }H)v^{\ell,\ff}_{n+1,i}+K_{n+1}^{ \ell, \ff }\tilde{y}_{n+1,i}^{\ell}
\end{split}
\right\}\quad i=1,...,P_\ell,\\
&\left.\begin{split}
\vHat_{n+1,i}^{\ell,\mathbf{c_1}}&=(I-K_{n+1}^{\ell,\mathbf{c_1}}H)v_{n+1,i}^{\ell,\mathbf{c_1}}+K_{n+1}^{\ell,\mathbf{c_1}}\tilde{y}_{n+1,i}^{\ell}\\
\vHat_{n+1,i}^{\ell,\mathbf{c_2}}&=(I-K_{n+1}^{\ell,\mathbf{c_2}}H)v_{n+1,i}^{\ell,\mathbf{c_2}}+K_{n+1}^{\ell,\mathbf{c_2}}\tilde{y}_{n+1,i+P_{\ell-1}}^{\ell}
\end{split}\right\} \quad i=1,...,P_{\ell-1},
\end{split}
\end{equation}
where $\{\eta_{n,i}^{\ell}\}_{n,\ell,i}$ are iid
$N(0,\Gamma)$-distributed random variables. We note that the pairwise
coupling of fine- and coarse-level particles is obtained through them
sharing the same initial condition, driving noise $W$ in the
dynamics~\eqref{ml:prediction}, and perturbed observations. And to further
elaborate on the relation between pairwise-coupled particles and
pairwise-coupled EnKF estimators, the numerator in the $(\ell,m)$-th
summand/sample of~\eqref{eq:mlEstimator} takes the following form when
represented in terms of its particles:
\[
\mu_n^{\ell,\ff,m}[\varphi]- \mu_n^{\ell,\fc,m}[\varphi] =
\begin{cases}
  P_0^{-1} \sum_{i=1}^{P_0} \varphi(\hat v^{0,\ff,m}_{n,i}) & \text{if} \quad \ell =0\\
  P_\ell^{-1}\sum_{i=1}^{P_\ell} \varphi(\hat v^{\ell,\ff,m}_{n,i}) - \varphi(\hat v^{\ell,\fc,m}_{n,i}) & \text{if} \quad \ell \ge 1.
\end{cases}
\]
The motivation for pairwise-coupled particles is primarily to reduce
the variance of the above sample/summand numerators
$\varphi(\vHat_{n}^{\ell,\ff,m})-\varphi(\vHat_{n}^{\ell,\fc,m})$, and
ultimately to improve the efficiency of your MLEnKF estimator.
Provided the variance of these terms are substantially reduced, the
number of samples $\{M_\ell\}$ needed to achieve a certain accuracy
with your MLEnKF estimator may be lowered substantially, and this
will improve the efficiency of your MLEnKF estimator.
See Figure~\ref{MLestimDiagram} for an illustration of one
prediction-update iteration of the full MLEnKF estimator.
\begin{figure}[ht!]
	\includegraphics[width=8cm,height=7cm]{{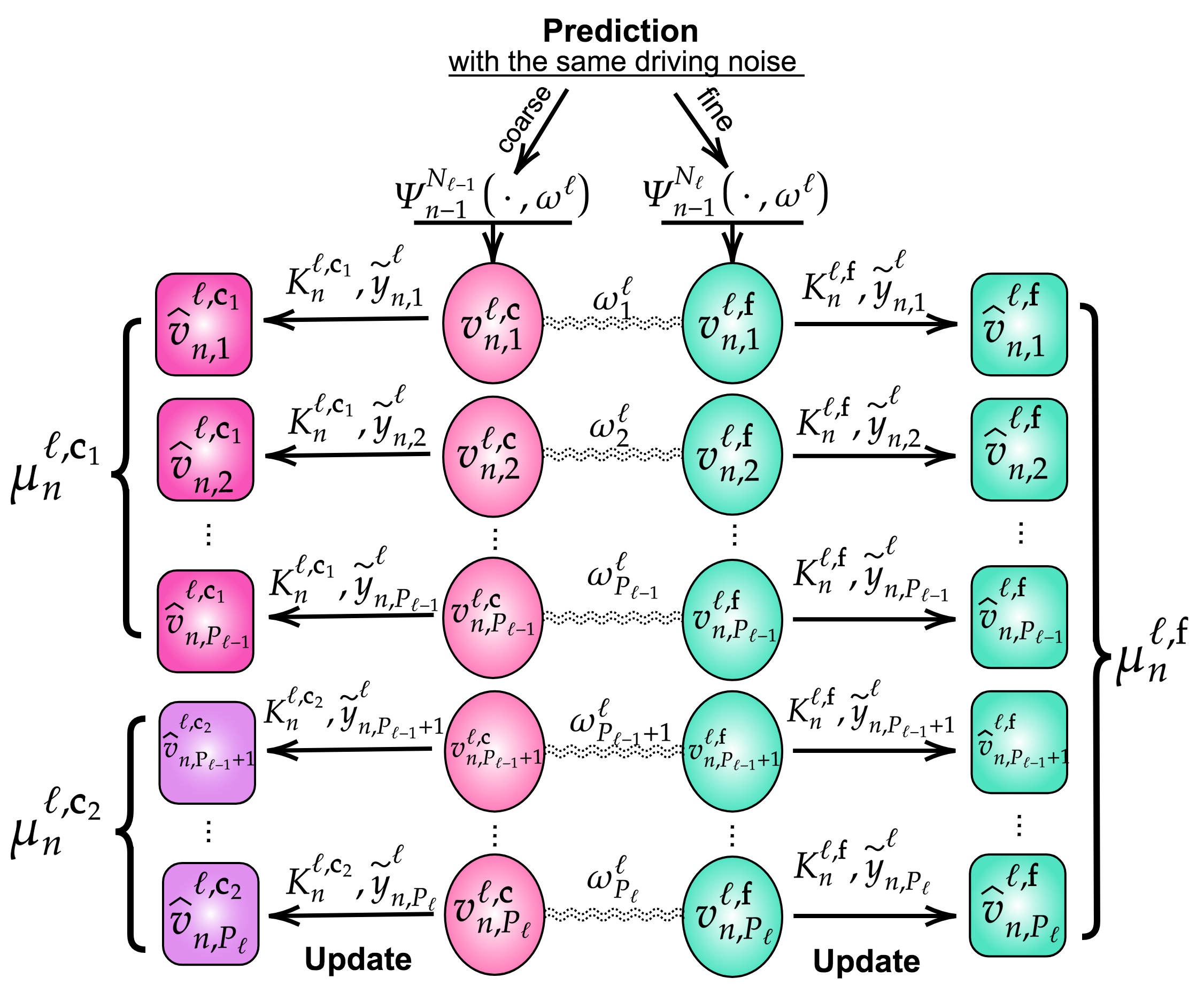}}
	\includegraphics[width=4.5cm,height=5.5cm]{{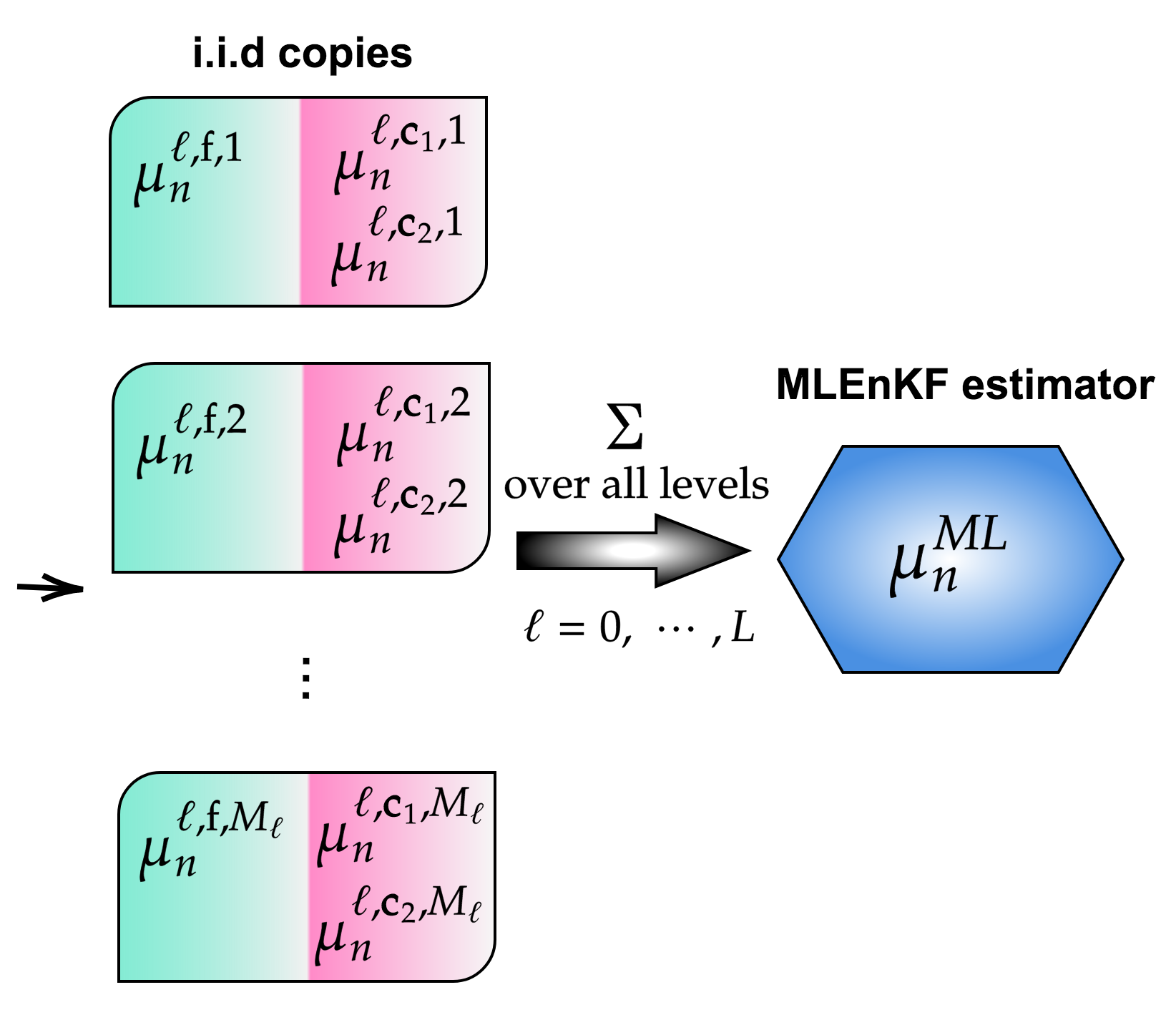}}
	\caption{One prediction-update iteration of the MLEnKF estimator described in Section~\ref{sssec:alg}. Green and pink ovals represent fine- and coarse-level prediction-state particles, respectively, sharing the same initial condition and driving noise $\omega^{\ell}$ and the respective squares represent fine- and coarse-level updated-state particles sharing the perturbed observartions. The MLEnKF estimator is obtained by iid copies of pairwise-coupled samples, cf~\eqref{eq:mlEstimator}.}
	\label{MLestimDiagram}
\end{figure}

\begin{remark}
  For the herein-introduced MLEnKF method all the EnKF estimators
  $\{\mu^{\ell, \cdot,m}[\varphi]\}_{\ell, m}$ are independent for
  different $(\ell,m)-$samples. This in particular implies that also
  all Kalman gains and particles from different $(\ell,m)$-samples are
  independent.  For the ``canonical'' MLEnKF method~\cite{hoel2016}, on
  the other hand, all the particles in the full estimator are updated
  using the one shared Kalman gain. Therefore, the particles in the
  ``canonical'' MLEnKF method are all correlated, while all particles
  from different $(\ell,m)$-samples in the herein-introduced
  estimator are independent. The more extensive
  independence properties of the new MLEnKF estimator simplifies the
  proofs of asymptotic convergence results and in some settings also
  improves theoretical complexity bounds. See
  Corollary~\ref{cor:mlenkf} and Remark~\ref{rem:oldMLEnKF} for a
  comparison of the theoretical complexity of the two methods.

\end{remark}


\subsection{Main results}\label{sec:mainResults}
We now present the main convergence and complexity results for EnKF
and MLEnKF. The proofs for these results are provided in
Appendix~\ref{sec:proofs}. We will need the following assumptions:

\begin{assumption}\label{ass:psi2}
  For any exponentially-growing sequence $\{N_\ell\}$ of natural
  numbers, there exists a constant $c_\Psi >0$ and $\beta>0$ such that
  for all $\ell \in \bN_0 \cup \{\infty\}$, $n \in \bN_0$, and
  $p \ge 2$,
\begin{itemize}
    
    \item[(i)] for all $|\kappa|_1 \le 1$,
    \[
     \norm{\partial^{\kappa} \Psi^{N_\ell}_n(u)}_p \leq c_{\Psi}
    (1+\norm{u}_p) \quad   \forall u \in L^p_n(\Omega, \bR^d),
    \]
    \item[(ii)] for all $|\kappa|_1 =2$,
    \[
    \norm{\partial^{\kappa} \Psi^{N_\ell}_n(u)}_p \leq c_{\Psi}
    (1+\norm{u}_{2p}) \quad   \forall u \in L^{2p}_n(\Omega, \bR^d),
    \]

  \item[(iii)] for all $|\kappa|_1 \le 1$,
    \[
    \norm{\partial^\kappa \Psi^{N_{\ell+1}}_n(u) - \partial^\kappa \Psi^{N_\ell}_n(u)}_p \le c_{\Psi}(1+\norm{u}_p)  N_\ell^{-\beta/2} \quad \forall u\in L^p_n(\Omega, \bR^d).
    \]
      
\end{itemize}
\end{assumption}

\begin{remark}
  If the dynamics~\eqref{eq:hmmDynamics} is given by the SDE
  in Remark~\ref{rem:firstOnSde} and $\Psi^N_n$ denotes the corresponding
  Euler-Maruyama numerical solution, then Assumption~\ref{ass:psi2} holds
  with $\beta= 1$, cf. \cite{szepessy2001adaptive, hoel2012adaptive, hoel2014implementation, hoel2016strong}. 
\end{remark}

\begin{theorem}[Convergence of EnKF]\label{thm:enkfConv}
  If Assumption~\ref{ass:psi} holds, then
  for any $\varphi \in \bF$, $p\ge 2$ and $n \in \bN_0$, 
\begin{equation}\label{eq:enkfConv}
  \|\mu^{N,P}_n [\varphi] - \bar{\mu}_n [\varphi] \|_p \lesssim P^{-1/2} + N^{-\alpha},
\end{equation}
where the hidden constant in $\lesssim$ depends on $p$, $n$ and $d$ (but not on $N$ and $P$).

\end{theorem}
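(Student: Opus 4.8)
The plan is to prove~\eqref{eq:enkfConv} by induction on the time index $n$, tracking simultaneously three quantities: a uniform-in-$(N,P)$ moment bound on the ensemble particles, the statistical (finite-ensemble) error, and the numerical bias. To disentangle the two error sources I would introduce the \emph{resolution-$N$ mean-field EnKF} measure $\mu_n^{N,\infty}$, obtained by running the mean-field EnKF recursion with the numerical dynamics $\Psi_n^N$ in place of the exact $\Psi_n$, and split by the triangle inequality
\[
\norm{\mu_n^{N,P}[\varphi] - \bar\mu_n[\varphi]}_p \le \norm{\mu_n^{N,P}[\varphi] - \mu_n^{N,\infty}[\varphi]}_p + \abs{\mu_n^{N,\infty}[\varphi] - \bar\mu_n[\varphi]}.
\]
The first term is the statistical error at fixed resolution, which I aim to bound by $\lesssim P^{-1/2}$ uniformly in $N$; the second term is deterministic, since both measures carry $P=\infty$, and I aim to bound it by $\lesssim N^{-\alpha}$.

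For the base case $n=0$ the ensemble consists of exact iid draws from $\Prob{u_0\mid Y_0}$, so the bias vanishes and the statistical error is controlled by a Marcinkiewicz--Zygmund inequality for iid sums, yielding $P^{-1/2}$, while the moment bounds follow from $u_0 \in \cap_{p\ge 2} L^p_0(\Omega,\bR^d)$. In the induction step, the prediction substep pushes the inductive bounds through $\Psi_n^N$ using the moment-growth bound Assumption~\ref{ass:psi}(i) and the Lipschitz bound (iii). The delicate substep is the update, which is nonlinear in the ensemble through the sample covariance $C_{n+1}^{N,P}$ and the associated Kalman gain $K_{n+1}^{N,P} = C_{n+1}^{N,P} H^\bT (H C_{n+1}^{N,P} H^\bT + \Gamma)^{-1}$. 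I would first establish $\norm{C_{n+1}^{N,P} - \bar C_{n+1}^{N}}_p \lesssim P^{-1/2}$, then transfer this to the gain using that the map $C \mapsto C H^\bT (H C H^\bT + \Gamma)^{-1}$ is Lipschitz on the relevant range of covariances, because $H C H^\bT + \Gamma \succeq \Gamma \succ 0$, with the Lipschitz constant tamed by the moment bounds through H\"older's inequality. Finally the affine update together with a Taylor expansion of $\varphi \in C_P^2(\bR^d,\bR)$ transfers the gain and prediction errors to $\norm{\mu_{n+1}^{N,P}[\varphi] - \mu_{n+1}^{N,\infty}[\varphi]}_p$.

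The main obstacle is that the particles within a single EnKF ensemble are identically distributed but \emph{not} independent: they are coupled through the shared sample covariance and Kalman gain inherited from every previous step, so a naive Monte Carlo bound on $\mu_n^{N,P}[\varphi]$ is unavailable. The way around this is a coupling argument. I would introduce auxiliary particles $\bar v_{n,i}^{N}$ driven by the \emph{same} noise and perturbed observations but updated with the deterministic mean-field gain $\bar K_{n+1}^{N}$; conditional on the fixed observation stream these are genuinely iid, so a Marcinkiewicz--Zygmund inequality supplies the $P^{-1/2}$ rate for their empirical average. One then bounds the per-particle discrepancy $\norm{\vHat_{n,i}^{N,P} - \bar v_{n,i}^{N}}_p$ recursively, the driving term of the recursion being exactly the gain difference $\norm{K_{n+1}^{N,P} - \bar K_{n+1}^{N}}_p \lesssim \norm{C_{n+1}^{N,P} - \bar C_{n+1}^{N}}_p$. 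Closing this recursion at order $P^{-1/2}$, while simultaneously maintaining the uniform moment bounds, is the technical heart of the statistical-error estimate.

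For the bias term I would run an analogous but purely deterministic induction, comparing the resolution-$N$ mean-field recursion with the exact one and using Assumption~\ref{ass:psi}(ii) to propagate the weak rate $N^{-\alpha}$ through the dynamics. Since the mean-field gain is a smooth function of the first and second moments of the prediction, and since $\bR_2[x_1,\ldots,x_d] \subset \bF$ guarantees that these moments correspond to admissible (degree-$\le 2$) test functions whose weak approximations converge at rate $N^{-\alpha}$, the mean-field gain $\bar K_{n+1}^{N}$ converges to $\bar K_{n+1}$ at the same rate; composing $\varphi \in \bF$ with the affine mean-field update and invoking the propagation property of Assumption~\ref{ass:psi}(ii) once more then yields $\abs{\mu_{n+1}^{N,\infty}[\varphi] - \bar\mu_{n+1}[\varphi]} \lesssim N^{-\alpha}$, which closes the induction and, combined with the statistical estimate, gives~\eqref{eq:enkfConv}.
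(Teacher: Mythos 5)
Your proposal follows essentially the same route as the paper's proof: the same splitting into statistical error plus deterministic bias (your statistical term is internally decomposed, exactly as in the paper, into a per-particle coupling error against an auxiliary ensemble driven by the same noise and perturbed observations but updated with the deterministic mean-field gain $\bar K_{n+1}^{N}$, plus a Marcinkiewicz--Zygmund bound for that iid auxiliary ensemble), the same key lemmas (covariance difference $\lesssim P^{-1/2}$ transferred to the gain via the Lipschitz property of $C \mapsto CH^\bT(HCH^\bT+\Gamma)^{-1}$ guaranteed by $\Gamma \succ 0$), and the same inductive bias argument exploiting $\bR_2[x_1,\ldots,x_d]\subset\bF$ to get $|\bar K_{n+1}^{N}-\bar K_{n+1}|_2 \lesssim N^{-\alpha}$ before propagating through Assumption~\ref{ass:psi}(ii). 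The proposal is correct and matches the paper's argument in all essential respects.
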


\begin{corollary}\label{cor:enkf}
  If Assumption~\ref{ass:psi} holds, then 
  $N \eqsim \epsilon^{-1/\alpha}$ and $P \eqsim \epsilon^{-2}$
  ensure that 
  \[
    \|\mu^{N,P}_n [\varphi] - \bar{\mu}_n [\varphi] \|_p \lesssim \epsilon
  \]
  for any $\varphi \in \bF$, $p\geq 2$ and $n \in \bN_0$.
  The resulting computational cost is
\begin{equation*}
    \mathrm{Cost}(\mathrm{EnKF}) \eqsim \epsilon^{-(2+\frac{1}{\alpha})}.
\end{equation*}
\end{corollary}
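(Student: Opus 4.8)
The plan is to obtain the corollary as a direct consequence of the $L^p$-error bound~\eqref{eq:enkfConv} in Theorem~\ref{thm:enkfConv} together with the per-solver cost bound in Assumption~\ref{ass:psi}(iv). There are really only two things to check: first, that the proposed parameter scalings drive the two error contributions below $\epsilon$ simultaneously, and second, that the total work of the method, expressed through $N$ and $P$, translates into the claimed cost-versus-accuracy rate.

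For the accuracy claim I would simply substitute the choices into~\eqref{eq:enkfConv}. With $P \eqsim \epsilon^{-2}$ the statistical contribution satisfies $P^{-1/2} \eqsim \epsilon$, and with $N \eqsim \epsilon^{-1/\alpha}$ the bias contribution satisfies $N^{-\alpha} \eqsim \epsilon$; summing the two and absorbing the constants yields $\|\mu^{N,P}_n[\varphi] - \bar\mu_n[\varphi]\|_p \lesssim \epsilon$ for every $\varphi \in \bF$, $p \ge 2$, and $n \in \bN_0$. This is precisely the equilibration of bias against statistical error that motivates the particular exponents $1/\alpha$ and $2$.

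For the cost I would tally the work over the fixed number $n$ of prediction--update iterations. At each iteration the dominant contribution is advancing the $P$ particles through the solver $\Psi^N$, each at a $\bP$-almost-sure cost $\lesssim N$ by Assumption~\ref{ass:psi}(iv), for a total of $\lesssim P N$. The remaining operations in the assimilation step, namely forming the biased sample covariance~\eqref{enkf:biasedCov}, computing the Kalman gain, and applying the particle-wise update, cost $\cO(P)$ and $\cO(1)$ respectively in the parameters $N,P$ (being merely polynomial in the fixed dimensions $d$ and $\dimObs$), and are hence subdominant to $PN$. Multiplying by the fixed $n$ and absorbing it into the hidden constant gives $\mathrm{Cost}(\mathrm{EnKF}) \eqsim P N \eqsim \epsilon^{-2}\,\epsilon^{-1/\alpha} = \epsilon^{-(2+1/\alpha)}$.

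Since this is a corollary rather than a theorem, I do not anticipate a genuine obstacle; the only point requiring a moment of care is the bookkeeping that the linear-algebra overhead of the assimilation step is truly of lower order than the forward-simulation cost. Because $d$, $\dimObs$, and $n$ are all held fixed while $N,P \to \infty$, every such term contributes only a lower-order factor and is safely swallowed by the constants implicit in $\eqsim$.
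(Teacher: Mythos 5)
Your proposal is correct and is essentially the argument the paper intends: Corollary~\ref{cor:enkf} is treated as an immediate consequence of Theorem~\ref{thm:enkfConv}, obtained by substituting $P \eqsim \epsilon^{-2}$ and $N \eqsim \epsilon^{-1/\alpha}$ into the bound $P^{-1/2}+N^{-\alpha}$ and counting the dominant cost as $P$ particles each advanced at cost $\cO(N)$ per step (Assumption~\ref{ass:psi}(iv)), with the $\cO(P)$ assimilation overhead subdominant. Your extra bookkeeping on the linear-algebra cost being lower order in the fixed dimensions $d$, $\dimObs$, and horizon $n$ is exactly the right justification for the $\eqsim PN$ claim.
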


\begin{remark}
  The proof of Theorem~\ref{thm:enkfConv} shows that
  Theorem~\ref{thm:enkfConv} and Corollary~\ref{cor:enkf} apply even
  if the constraint $\bF \subset C^2_P(\bR^d,\bR)$ in
  Assumption~\ref{ass:psi}(ii) is weakened to
  $\bF \subset C^1_P(\bR^d,\bR)$.
\end{remark}

\begin{theorem}[Convergence of MLEnKF]\label{thm:mlenkfConv}
  If Assumptions~\ref{ass:psi} and~\ref{ass:psi2} hold,
  then for any $L\ge 1$ and triplet of sequences
  $\{M_\ell\}, \{N_\ell\}, \{P_\ell\} \subset \bN$
  with $P_\ell \eqsim 2^\ell$ and exponentially-growing $N_\ell$,
  it holds for any
  $\varphi \in \bF$, $n\ge 0$, and $p\ge 2$ that
  \[
    \|\mu_n^{\ML}[\varphi] - \overline \mu_n[\varphi]\|_p \lesssim
    N_L^{-\beta/2}P_L^{-1/2} + P_L^{-1} +N_L^{-\alpha}+
    \sum_{\ell=0}^L M_\ell^{-1/2}(N_\ell^{-\beta/2}P_\ell^{-1/2}+ P_\ell^{-1}),
  \]
  where the hidden constant in $\lesssim$ depends on $p$, $n$ and $d$ (but not on
  $\{M_\ell\}, \{N_\ell\}, \{P_\ell\}$ and $L$).
\end{theorem}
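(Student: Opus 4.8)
The plan is to follow the classical multilevel Monte Carlo template: split the error into a statistical part and a bias part, and control each level's contribution through a propagation-of-chaos comparison with the mean-field EnKF. Writing $g_\ell := \mu_n^{\ell,\ff}[\varphi] - \mu_n^{\ell,\fc}[\varphi]$ for the level-$\ell$ summand in~\eqref{eq:mlEstimator}, we have $\mu_n^{\ML}[\varphi] = \sum_{\ell=0}^L M_\ell^{-1}\sum_{m=1}^{M_\ell} g_\ell^m$ with the $g_\ell^m$ iid copies of $g_\ell$ and the levels mutually independent. Since $\Ex{\mu_n^{\ell,\fc}[\varphi]} = \Ex{\mu_n^{N_{\ell-1},P_{\ell-1}}[\varphi]}$, the construction telescopes and $\Ex{\mu_n^{\ML}[\varphi]} = \Ex{\mu_n^{N_L,P_L}[\varphi]}$. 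First I would therefore write
\[
\mu_n^{\ML}[\varphi] - \overline\mu_n[\varphi] = \underbrace{\sum_{\ell=0}^L \frac{1}{M_\ell}\sum_{m=1}^{M_\ell}\big(g_\ell^m - \Ex{g_\ell}\big)}_{\text{statistical error}} + \underbrace{\big(\Ex{\mu_n^{N_L,P_L}[\varphi]} - \overline\mu_n[\varphi]\big)}_{\text{bias}}.
\]

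For the statistical error I would apply the triangle inequality across levels (the sharper route exploiting independence is not needed for the stated bound) and, within each level, the Marcinkiewicz--Zygmund inequality for the iid mean-zero average, reducing matters to
\[
\Big\| \tfrac{1}{M_\ell}\textstyle\sum_{m=1}^{M_\ell}(g_\ell^m - \Ex{g_\ell}) \Big\|_p \lesssim M_\ell^{-1/2}\, \|g_\ell - \Ex{g_\ell}\|_p .
\]
The heart of the matter is then the per-level variance estimate $\|g_\ell - \Ex{g_\ell}\|_p \lesssim N_\ell^{-\beta/2}P_\ell^{-1/2} + P_\ell^{-1}$ for $\ell \ge 1$ (and $\lesssim P_0^{-1/2}$ for $\ell=0$, consistent since $N_0 \eqsim 1$ and $\mu_n^{0,\fc}=0$). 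To obtain it I would introduce, for each particle index $i$, the mean-field EnKF particles $\vBarHat$ on resolutions $N_\ell$ and $N_{\ell-1}$, coupled to the empirical ones through the same initial data, driving noise and perturbed observations, and decompose
\[
g_\ell = \frac{1}{P_\ell}\sum_{i=1}^{P_\ell}\Big(\varphi(\vBarHat_{n,i}^{\ell,\ff}) - \varphi(\vBarHat_{n,i}^{\ell,\fc})\Big) + \frac{1}{P_\ell}\sum_{i=1}^{P_\ell}\Big(\big[\varphi(\vHat_{n,i}^{\ell,\ff}) - \varphi(\vBarHat_{n,i}^{\ell,\ff})\big] - \big[\varphi(\vHat_{n,i}^{\ell,\fc}) - \varphi(\vBarHat_{n,i}^{\ell,\fc})\big]\Big).
\]
Because the mean-field Kalman gains are deterministic, the summands of the first term are iid in $i$ and, by propagating Assumption~\ref{ass:psi2}(iii) through the prediction-update recursion, each has $L^p$-norm $\lesssim N_\ell^{-\beta/2}$; Marcinkiewicz--Zygmund then gives a centered fluctuation $\lesssim N_\ell^{-\beta/2}P_\ell^{-1/2}$. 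The second term measures the empirical-versus-mean-field discrepancy, which individually is $O(P_\ell^{-1/2})$ (mean-field convergence of EnKF); the difference-of-differences structure, enabled by the fine and coarse ensembles sharing noise, cancels this leading fluctuation and leaves a remainder $\lesssim P_\ell^{-1}$. Establishing this cancellation is the main obstacle: it rests on quantitative moment and Lipschitz bounds for the empirical gain $K^{\ell,\cdot}=K(C^{\ell,\cdot})$ as a smooth map of the biased sample covariance~\eqref{ml:biasedCov}, combined with the moment estimates of Assumptions~\ref{ass:psi} and~\ref{ass:psi2}, which I would isolate as preliminary lemmas before closing a recursion in $n$.

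For the bias I would insert the mean-field EnKF measure $\overline\mu_n^{N_L}$ at resolution $N_L$ and split $\Ex{\mu_n^{N_L,P_L}[\varphi]} - \overline\mu_n[\varphi]$ into a discretization part $\overline\mu_n^{N_L}[\varphi] - \overline\mu_n[\varphi] \lesssim N_L^{-\alpha}$, controlled by Assumption~\ref{ass:psi}(ii) propagated through the filter, and an ensemble part $\Ex{\mu_n^{N_L,P_L}[\varphi]} - \overline\mu_n^{N_L}[\varphi] \lesssim P_L^{-1}$, where the leading $O(P_L^{-1/2})$ gain fluctuation is mean-zero so that only its second-order (variance-type) contribution survives the expectation, via the same Taylor expansion of the gain map; the residual interaction between the resolution error and the ensemble fluctuation supplies the cross term $N_L^{-\beta/2}P_L^{-1/2}$. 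Collecting the bias bound $N_L^{-\beta/2}P_L^{-1/2} + P_L^{-1} + N_L^{-\alpha}$ with the per-level statistical bounds summed against $M_\ell^{-1/2}$ yields the claim. I expect the per-level variance estimate — specifically the $P_\ell^{-1}$ cancellation in the difference-of-differences and the uniform-in-$\ell$ control of the Kalman-gain moments through the recursion — to be the decisive technical difficulty.
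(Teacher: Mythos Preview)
Your strategy coincides with the paper's: both introduce auxiliary mean-field particles $(\vBarHat^{\ell,\ff},\vBarHat^{\ell,\fc})$ sharing all randomness with $(\vHat^{\ell,\ff},\vHat^{\ell,\fc})$, split $g_\ell$ into an iid mean-field part (centered fluctuation $\lesssim N_\ell^{-\beta/2}P_\ell^{-1/2}$ by M--Z) and the double-difference $E_{P_\ell}[\varphi(\vHat^{\ell,\ff})-\varphi(\vHat^{\ell,\fc})-\varphi(\vBarHat^{\ell,\ff})+\varphi(\vBarHat^{\ell,\fc})]$, and correctly flag the latter as the crux. The paper proves it (Lemma~\ref{lemma:dbDiffphi}) by iterated linearization: mean-value expansions of $\varphi$ and of $\Psi^{N_\ell}$ (using Assumption~\ref{ass:psi2}(i)--(ii)) reduce the question to tracking products $D\Psi^{N_\ell}(\vBarHat^{\ell,\fc})(I-K^{\ell,\ff}H)$ backwards through the recursion, together with a double-difference estimate on the gains themselves (Lemma~\ref{DoubleGainCont}), which is where the two independent coarse gains $K^{\ell,\fc_1},K^{\ell,\fc_2}$ enter nontrivially.

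Two points to sharpen. First, the double-difference is $\lesssim N_\ell^{-\beta/2}P_\ell^{-1/2}+P_\ell^{-1}$, not just $P_\ell^{-1}$: products such as $(K^{\ell,\ff}-\bar K^{\ell,\ff})\cdot(\vBar^{\ell,\ff}-\vBar^{\ell,\fc})$ do not cancel and give the cross term. This does not affect your final bound since the mean-field part already contributes $N_\ell^{-\beta/2}P_\ell^{-1/2}$. Second, your bias heuristic is imprecise: the gain fluctuation $K_n^{N,P}-\bar K_n^N$ is \emph{not} mean-zero (the particles are correlated through earlier shared gains and the gain map is nonlinear), so proving $|\E[\mu_n^{N_L,P_L}[\varphi]]-\bar\mu_n^{N_L}[\varphi]|\lesssim P_L^{-1}$ directly requires the same recursive work as the variance estimate. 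The paper avoids duplicating that effort by a tail-sum trick: it writes $\E[(\mu_n^{L,\ff}-\bar\mu_n^{L,\ff})[\varphi]]=-\sum_{\ell>L}\E[\text{double difference at level }\ell]$ and sums the already-proved Lemma~\ref{lemma:dbDiffphi} bound geometrically, which is also why the term $N_L^{-\beta/2}P_L^{-1/2}$ appears in the theorem (as a tail-sum artifact, not from any resolution--ensemble ``interaction'' at level $L$ as you suggest).
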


\begin{remark}
In Appendix~\ref{sec:proofs}, the Theorems~\ref{thm:enkfConv}
and~\ref{thm:mlenkfConv} are for simplicity proved for EnKF and
MLEnKF methods using biased sample covariances, precisely as introduced
in~\eqref{enkf:biasedCov} and \eqref{ml:biasedCov},
respectively. However, the proofs can easily be extended, without any changes in
convergence rates, to methods that instead use unbiased sample covariances,
cf.~Remark~\ref{unbiasedCov}.
\end{remark}

\begin{corollary}\label{cor:mlenkf}
  If Assumptions~\ref{ass:psi} and~\ref{ass:psi2} hold, then for any
  $\epsilon, s>0$, the configuration
  $L = \ceil{\frac{\log_2(\epsilon^{-1})}{\min(1,(1+\beta s)/2,
      \alpha s)}}$, $P_\ell \eqsim 2^\ell$, $N_\ell \eqsim 2^{s\ell}$
  and
\begin{equation}\label{eq:chooseMlr}
M_\ell  \eqsim
\begin{cases} 
\epsilon^{-2} \, 2^{-\mathlarger{\frac{3+ 2s+\min(\beta s,1)}{3} \ell}}  +1& \text{if } \min(\beta s,1)>s, \\
\epsilon^{-2} L^2 2^{-(1 + s) \ell} +1,  &\text{if } \min(\beta s,1)=s,\\
\epsilon^{-2 -\mathlarger{ \frac{2(s-\min(\beta s,1))}{3\min(1, (1+\beta s)/2, \alpha s)}}}
\, 2^{-\mathlarger{\frac{3+ 2s+\min(\beta s,1)}{3} \ell}} +1,  &\text{if } \min(\beta s,1)<s,
\end{cases}
\end{equation}
ensures that for any $\varphi \in \bF$, $n\ge 0$ and $p\ge 2$,
\begin{equation}
\|\mu^{\rm ML}_n [\varphi] - \bar{\mu}_n [\varphi] \|_p \lesssim \epsilon.
\label{eq:lperror}
\end{equation}
Moreover, if one sets the parameter $s$ such that 
\begin{equation}\label{eq:sOptimization}
s \in
\begin{cases}
[\alpha^{-1},1) & \text{if } \beta>1 \quad \& \quad \alpha > 1,\\              
[\alpha^{-1},1]  & \text{if } (\beta> 1 \quad \& \quad \alpha =1) \text{ or } (\beta =1  \text{ and } \alpha \ge 1),\\
\{\alpha^{-1}\} & \text{if } (\beta\ge 1 \quad \& \quad \alpha <1) 
\text{ or } (\beta <1  \text{ and } \alpha \le \beta),\\
\{(2\alpha-\beta)^{-1}\} & \text{if } \beta<1 \quad \& \quad \alpha >\beta,
\end{cases}
\end{equation}
then the cost of the MLEnKF estimator is bounded by
\begin{equation}\label{eq:mlenkfCosts}
\cost_(\mathrm{MLEnKF}) \eqsim 
\begin{cases}
\epsilon^{-2} & \text{if } \beta>1 \quad \& \quad \alpha > 1,\\              
\epsilon^{-2}|\log(\epsilon)|^3  & \text{if } (\beta> 1 \quad \& \quad \alpha =1) \text{ or } (\beta =1  \text{ and } \alpha \ge 1),\\
\epsilon^{-(1+ 1/\alpha)} & \text{if } (\beta\ge 1 \quad \& \quad \alpha <1) 
\text{ or } (\beta <1  \text{ and } \alpha \le \beta),\\
\epsilon^{-(2+ (1-\beta)/\alpha)} & \text{if } \beta<1 \quad \& \quad \alpha >\beta.
\end{cases}
\end{equation}
\end{corollary}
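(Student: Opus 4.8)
The plan is to feed the parameter choices into the $L^p$-bound of Theorem~\ref{thm:mlenkfConv} and then run the standard multilevel cost-versus-error optimization. Substituting $P_\ell \eqsim 2^\ell$ and $N_\ell \eqsim 2^{s\ell}$, the three resolution/ensemble-bias terms become $N_L^{-\beta/2}P_L^{-1/2}+P_L^{-1}+N_L^{-\alpha} \eqsim 2^{-(1+\beta s)L/2}+2^{-L}+2^{-\alpha s L} \eqsim 2^{-\theta L}$ with $\theta := \min\!\big(1,(1+\beta s)/2,\alpha s\big)$. The prescribed $L=\ceil{\log_2(\epsilon^{-1})/\theta}$ gives $\theta L \ge \log_2(\epsilon^{-1})$, hence $2^{-\theta L}\le \epsilon$, so each of these terms is $\lesssim \epsilon$. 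It then remains to choose $\{M_\ell\}$ so that the statistical term $\sum_{\ell=0}^L M_\ell^{-1/2}V_\ell$, with $V_\ell := N_\ell^{-\beta/2}P_\ell^{-1/2}+P_\ell^{-1}\eqsim 2^{-(1+\min(\beta s,1))\ell/2}$, is also $\lesssim \epsilon$ while keeping the cost small.

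Next I would set up the constrained optimization. By Assumption~\ref{ass:psi}(iv), one pairwise-coupled sample at level $\ell$ simulates $P_\ell$ fine particles at resolution $N_\ell$ and $2P_{\ell-1}=P_\ell$ coarse particles at resolution $N_{\ell-1}\le N_\ell$, so its cost is $C_\ell \eqsim P_\ell N_\ell \eqsim 2^{(1+s)\ell}$ (the Kalman-gain/linear-algebra work being a lower-order multiple of $P_\ell$). Minimizing $\sum_\ell M_\ell C_\ell$ subject to $\sum_\ell M_\ell^{-1/2}V_\ell = \epsilon$ via a Lagrange multiplier yields $M_\ell \propto (V_\ell/C_\ell)^{2/3}\eqsim 2^{-\frac{3+2s+\min(\beta s,1)}{3}\ell}$; fixing the constant from the error constraint, the geometric sum $\sum_\ell M_\ell^{-1/2}V_\ell \eqsim \kappa^{-1/2}\sum_\ell 2^{(s-\min(\beta s,1))\ell/3}$ is convergent, flat, or $\ell=L$-dominated according to whether $s-\min(\beta s,1)$ is negative, zero, or positive, which produces exactly the three cases of~\eqref{eq:chooseMlr} (the $+1$ enforcing $M_\ell\ge 1$, and the extra $L^2$ in the middle case compensating the flat sum $\eqsim L$). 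I would then verify $\|\mu_n^{\ML}[\varphi]-\bar\mu_n[\varphi]\|_p\lesssim\epsilon$ by checking the statistical sum is $\lesssim\epsilon$ in each regime; the only care needed is on the fine levels where the $+1$ forces $M_\ell=1$, where one invokes the geometric decay of $V_\ell$ to bound their total contribution by $\lesssim\epsilon$.

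Finally I would compute $\mathrm{Cost}(\mathrm{MLEnKF})\eqsim \sum_\ell M_\ell C_\ell$. The genuine-sampling part contributes $\kappa\sum_\ell 2^{(s-\min(\beta s,1))\ell/3}$ (the same exponent as the error sum) and the $+1$ part contributes $\sum_\ell C_\ell \eqsim 2^{(1+s)L}\eqsim \epsilon^{-(1+s)/\theta}$; carrying these through the three $M_\ell$-regimes the total cost reduces to $\epsilon^{-2}$, a logarithmically corrected $\epsilon^{-2}|\log\epsilon|^3$, or $\epsilon^{-2-(s-\min(\beta s,1))/\theta}$, added to $\epsilon^{-(1+s)/\theta}$, with $L\eqsim|\log\epsilon|$ supplying the logarithm. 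The main obstacle is this last step: minimizing the competing $s$-dependent exponents, where both $\min(\beta s,1)$ and $\theta$ are themselves piecewise in $s$. I expect the optimal $s$ to be located by balancing the two active terms — e.g. $(1+\beta s)/2=\alpha s$ forces $s=(2\alpha-\beta)^{-1}$, while a binding constraint $\alpha s=1$ forces $s=\alpha^{-1}$ — which is precisely how the four ranges in~\eqref{eq:sOptimization} arise; substituting each optimal $s$ and simplifying then yields the four rates in~\eqref{eq:mlenkfCosts}. The delicate, case-heavy bookkeeping of which term dominates in each of the four $(\alpha,\beta)$-regimes is the part I expect to be most laborious.
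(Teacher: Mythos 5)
Your proposal follows essentially the same route as the paper's proof: choose $L$ so that the bias terms from Theorem~\ref{thm:mlenkfConv} are $\lesssim\epsilon$, determine $M_\ell$ by a Lagrange-multiplier minimization of $\sum_{\ell} M_\ell N_\ell P_\ell$ subject to $\sum_{\ell} M_\ell^{-1/2}(N_\ell^{-\beta/2}P_\ell^{-1/2}+P_\ell^{-1})\lesssim\epsilon$ (yielding exactly the three regimes of~\eqref{eq:chooseMlr}, including the $L^2$ factor and the $+1$ whose cost contribution produces the $\epsilon^{-(1+s)/\min(1,(1+\beta s)/2,\alpha s)}$ term), and then optimize the cost exponent over $s$ by balancing $\alpha s$ against $\min(1,(1+\beta s)/2)$, which is precisely where $s=\alpha^{-1}$ and $s=(2\alpha-\beta)^{-1}$ come from. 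The only difference is one of completeness rather than method: the paper carries out the final case-by-case analysis in $\beta<1$, $\beta=1$, $\beta>1$ via the function $f(s)=(1+s)/\min(1,(1+\beta s)/2,\alpha s)$ and its minimizer, whereas you set this step up correctly but leave the bookkeeping unexecuted.
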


The optimization of the parameter $s$ is stated in terms of inclusion sets in~\eqref{eq:sOptimization}
as that may be useful for implementations. For further details on this optimization and
the computational cost of the MLEnKF estimator, see Appendix~\ref{sec:proofs}.3.
The a priori cost-versus-accuracy results of Corollaries~\ref{cor:enkf}
and~\ref{cor:mlenkf} show that in settings where both rates are sharp,
MLEnKF will asymptotically have better complexity than
EnKF.

\begin{remark}\label{rem:oldMLEnKF} For comparison,
  we recall in compact form
  the cost-versus-accuracy result for the ''canonical" MLEnKF~\cite[Theorem 3.2]{hoel2016}:
  For any $\epsilon>0$ and sufficiently smooth QoI $\varphi$ one may choose the estimator's degrees of freedom
  so that
\begin{equation}\label{eq:lperrorOld}
\|\mu^{\rm ML^{OLD}}_n [\varphi]- \bar{\mu}_n [\varphi] \|_p \lesssim  \epsilon,
\end{equation}
where $\mu^{\rm ML^{OLD}}_n [\varphi]$ denotes the ``canonical'' MLEnKF estimator.
The computational cost of achieving this accuracy is bounded by
\begin{equation}\label{eq:mlenkfCostsOld}
\cost_(\mathrm{MLEnKF^{OLD}}) \eqsim 
\begin{cases}
\abs{\log(\epsilon)}^{2n} \epsilon^{-2}, & \text{if} \quad \beta > 1,\\              
\abs{\log(\epsilon)}^{2n+3} \epsilon^{-2} , & \text{if} \quad \beta = 1,\\
\Big(\epsilon/\abs{\log(\epsilon)}^{2n}\Big)^{-\left( 2 + \frac{1-\beta}{\alpha} \right)}, & \text{if} \quad \beta < 1.
\end{cases}
\end{equation}

To achieve the same $\cO(\epsilon)$-accuracy for the two MLEnKF
methods, the theoretical bounds imply that for any
$n\ge 2$ and any admissible $(\alpha,\beta)$-values with $\alpha \ge 1$, 
\[
\lim_{\epsilon \to 0} \frac{\cost_(\mathrm{MLEnKF^{NEW}})}{\cost_(\mathrm{MLEnKF^{OLD}})} = 0.
\]
In other words, if our theoretical bounds are sharp for both methods,
then the new MLEnKF method asymptotically outperforms the
``canonical'' one in terms of computational cost versus accuracy. We
note however that the regularity assumptions imposed for the new
method's hierarchy of numerical solvers,
cf.~Assumption~\ref{ass:psi2}, are more restrictive than those needed
for the ``canonical'' method, and that we have previously
made the conjecture that the error bounds for the ``canonical'' method are
not sharp, cf.~\cite[Remark 3]{hoel2016}.  
\end{remark}

\section{Numerical examples}

\label{sec:numerics}
In this section, we numerically compare the performance of MLEnKF
and EnKF and (numerically) verify the results predicted by our theory.
We will consider a couple of test problems with dynamics
of the form 
\begin{equation}\label{sde:genform}
u_{n+1} = \Psi(u_{n}) :=u_n+ \int_{n}^{n+1} -V'(u_t)dt+ \int_{n}^{n+1} \sigma dW_t, 
\end{equation}
where the potential $V\in C^\infty_P(\bR)$ satisfies
$V'' \in C^\infty_B(\bR)$, the diffusion coefficient is constant; $\sigma=0.5$, and
linear observations~\eqref{eq:observations} with $H=1$ and
$\Gamma=0.1$. For any $N\ge 1$, $\Psi^N$ will in this section denote
the Milstein numerical scheme with uniform timestep $\Delta t = 1/N$.
This yields the convergence rates $\alpha=1$, $\beta=2$.

For any $x \in \bR$, let $\mathrm{Round}(x)$ denote the nearest integer to $x$.
Given an input RMSE constraint $\epsilon>0$, we
seek to equilibrate the magnitude of all error terms
(cf.~\eqref{eq:enkfConv} for EnKF and Corollary~\ref{cor:mlenkf}
for MLEnKF) by setting the degrees of freedom to
\begin{equation}\label{Par: enkf}
P = \mathrm{Round}(8 \epsilon^{-2}) \quad \text{and} \quad  N  = \mathrm{Round}(\epsilon^{-1})
\end{equation}
for EnKF, and for MLEnKF we set
$L= \mathrm{Round}(\log_2(\epsilon^{-1}))-1$, $N_\ell = 2^{\ell+1}$, $P_\ell = 10 \times 2^\ell$,
and
\begin{equation*}\label{mlenkf}
  M_\ell =
  \begin{cases}
    2 \mathrm{Round}(\epsilon^{-2} L^2 2^{-3}) & \text{if} \quad \ell =0\\
    \mathrm{Round}(\epsilon^{-2} L^2 2^{-2\ell-3} ) & \text{if} \quad  1 \le \ell \le  L.
  \end{cases}
\end{equation*}

Over a sequence of iterations $n= 1,\ldots, \cN$, we compare the
performance of the methods in terms of error versus computer runtime
(wall-clock time), where the error is measured in time-averaged-root-mean-squared error (RMSE):
\[
\mbox{RMSE(EnKF)}:=\sqrt{ \frac{1}{100(\mathcal{N}+1)}\sum_{i=1}^{100}\sum_{n=0}^{\mathcal{N}}\abs{\mu^{N,P}_{n,i} [\varphi] - \bar{\mu}_n [\varphi]}^2}\lessapprox \mathrm{Runtime}^{-1/3},
\]
where for a given $\epsilon>0$ with $\alpha=1$ and $\beta=2$, the last approximate asymptotic inequality follows from Corollary~\ref{cor:enkf}
\[
\mbox{RMSE(EnKF)}\approx \prt{ \sum_{n=0}^{\mathcal{N}}\frac{\norm{\mu^{N,P}_{n} [\varphi] - \bar{\mu}_n [\varphi]}_2^2}{\mathcal{N}+1}}^{1/2}\lesssim \epsilon \eqsim \cost_(\mathrm{EnKF})^{-1/3}.
\]
Similarly, 
\begin{equation*}
\begin{split}
\mbox{RMSE(MLEnKF)}&:=\sqrt{ \frac{1}{100(\mathcal{N}+1)}\sum_{i=1}^{100}\sum_{n=0}^{\mathcal{N}}\abs{\mu_{n,i}^{\rm ML}[\varphi]-\bar{\mu}_n[\varphi]}^2} \\
&\approx \prt{ \sum_{n=0}^{\mathcal{N}}\frac{\norm{\mu_{n}^{\rm ML}[\varphi]-\bar{\mu}_n[\varphi]}_2^2}{\mathcal{N}+1}}^{1/2}\lesssim \log(10+\mathrm{Runtime})^{3/2} \mathrm{Runtime}^{-1/2},
\end{split}
\end{equation*}
where the last inequality follows from Corollary~\ref{cor:mlenkf}, since it implies that
\[
\mbox{RMSE(MLEnKF)}\lessapprox \epsilon \eqsim \log(\cost_(\mathrm{MLEnKF}))^{3/2}\cost_(\mathrm{MLEnKF})^{-1/2}.
\]
Note here that $\{\mu_{.,i}^{N,P}[\varphi]\}_{i=1}^{100}$ and $\{\mu_{.,i}^{\rm ML}[\varphi]\}_{i=1}^{100}$ are
computed using iid sequences of EnKF and MLEnKF empirical measures, respectively.

\subsection*{Reference solutions and computer architecture}
For the first test problem we will consider,
the dynamics $\Psi$ is linear. Therefore, we
compute the exact reference solution $\bar{\mu}_n[\varphi]$ 
straightforwardly using the Kalman filter. The second
test problem we will consider has nonlinear dynamics,
and we need to approximate the reference solution. 
A pseudo-reference solution is then obtained by the deterministic mean-field EnKF (DMFEnKF)
algorithm described in \cite{law2016deterministic},
cf.~Appendix~\ref{sec:dmfenkf}.

The numerical simulations were computed in parallel on five cores on
a Mac Pro with Quad-Core Intel Xeon X-5550 8-core processor with 16 GB
of RAM.  The computer code is written in the Julia programming
language~\cite{Julia-2017}, and it can be downloaded from
\url{https://github.com/GaukharSH/mlenkf}.

\subsection{Ornstein-Uhlenbeck process} 

\label{ssec:ou}
We consider the SDE~\eqref{sde:genform} with 
$V(u)=u^2/2$, and the initial condition is $u_0 \sim N(0,\Gamma)$.
This is an Ornstein-Uhlenbeck (OU) process with
the exact solution
\[
\Psi(u_{n}) =u_ne^{-t}+\int_{0}^{t}\sigma e^{s-t}dW_s.
\]
For a sequence of inputs,
$\epsilon = [2^{-4}, 2^{-5}, 2^{-6}, 2^{-7}, 2^{-8}]$ for EnKF and
MLEnKF, $\epsilon = [2^{-4}, 2^{-4.5}, 2^{-5},\ldots, 2^{-9}]$ for
"canonical" MLEnKF, we study the performance of the three methods in
terms of runtime versus RMSE over timeframes $\mathcal{N}=10$ and
$\mathcal{N}=20$ observation times in Figure~\ref{fig:fig1Ex1}.  For
both QoI considered (the mean and the variance), the observations are
in agreement with Corollaries~\ref{cor:enkf} and~\ref{cor:mlenkf}, and
Remark~\ref{rem:oldMLEnKF}.  The new MLEnKF method performs
similarly as the ``canonical'' MLEnKF method and that both methods
asymptotically outperform EnKF.
\begin{figure}[h!]
	\centering
	\includegraphics[width=0.47\textwidth]{{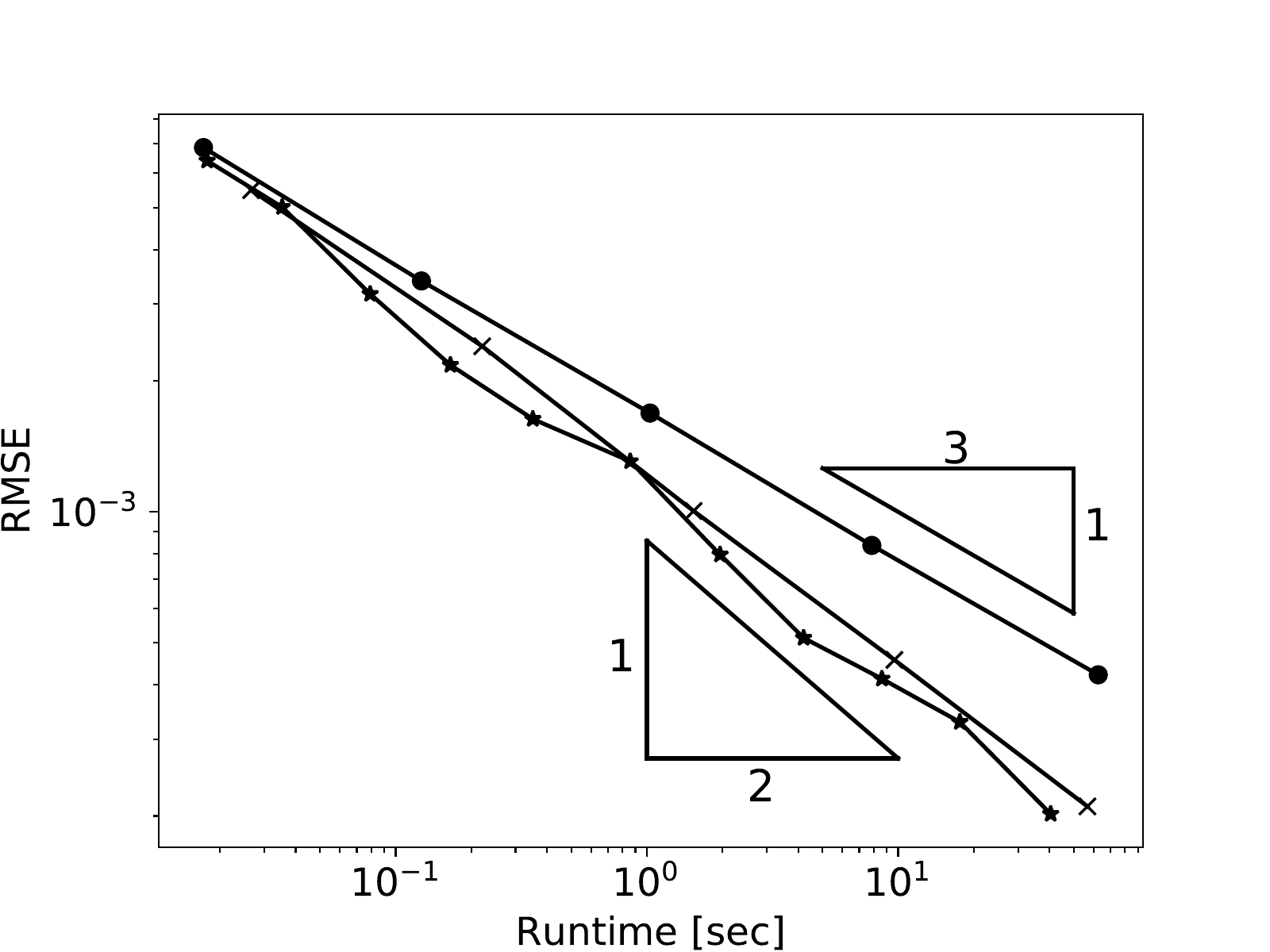}}
\includegraphics[width=0.47\textwidth]{{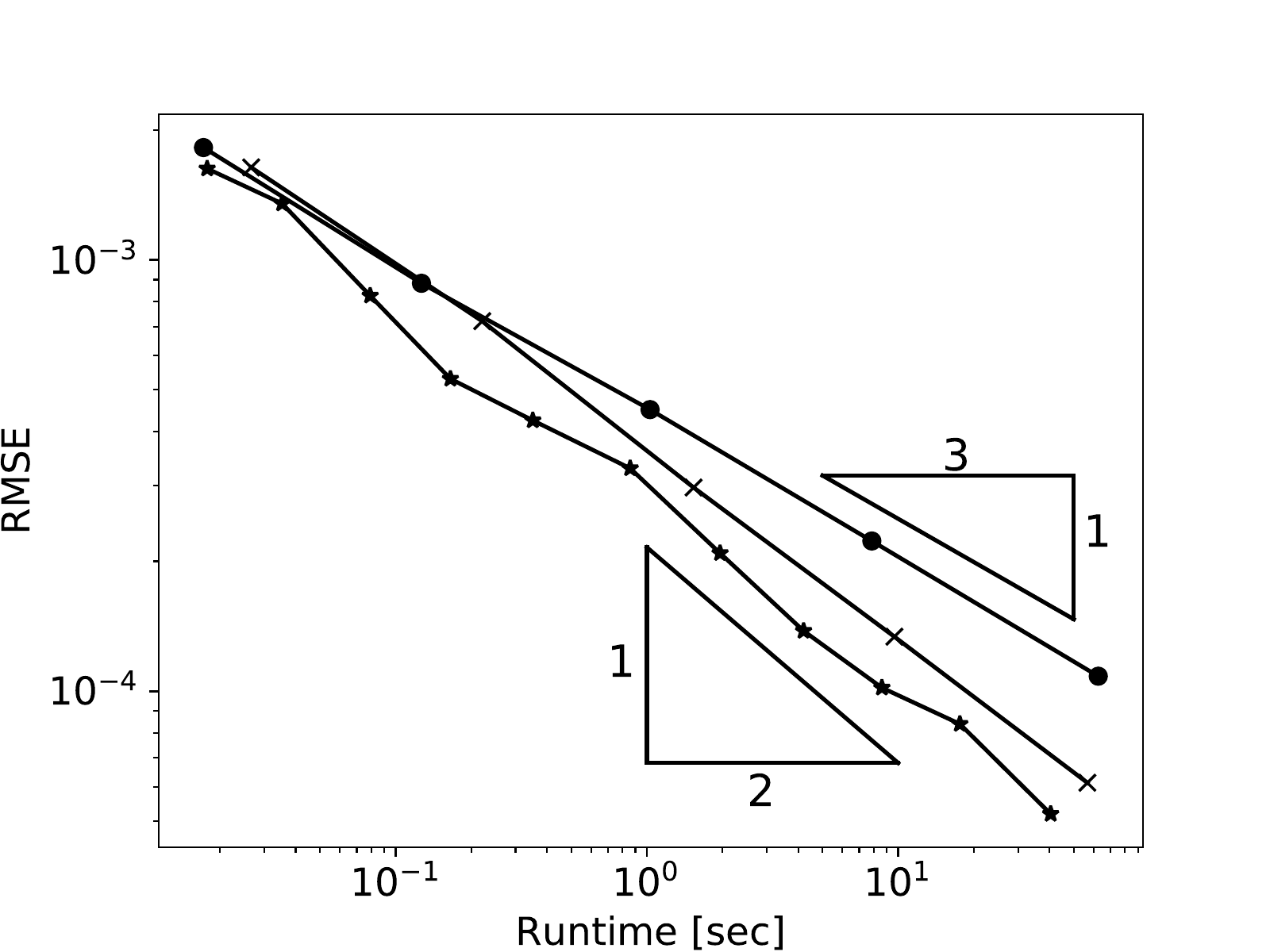}}\\
\includegraphics[width=0.47\textwidth]{{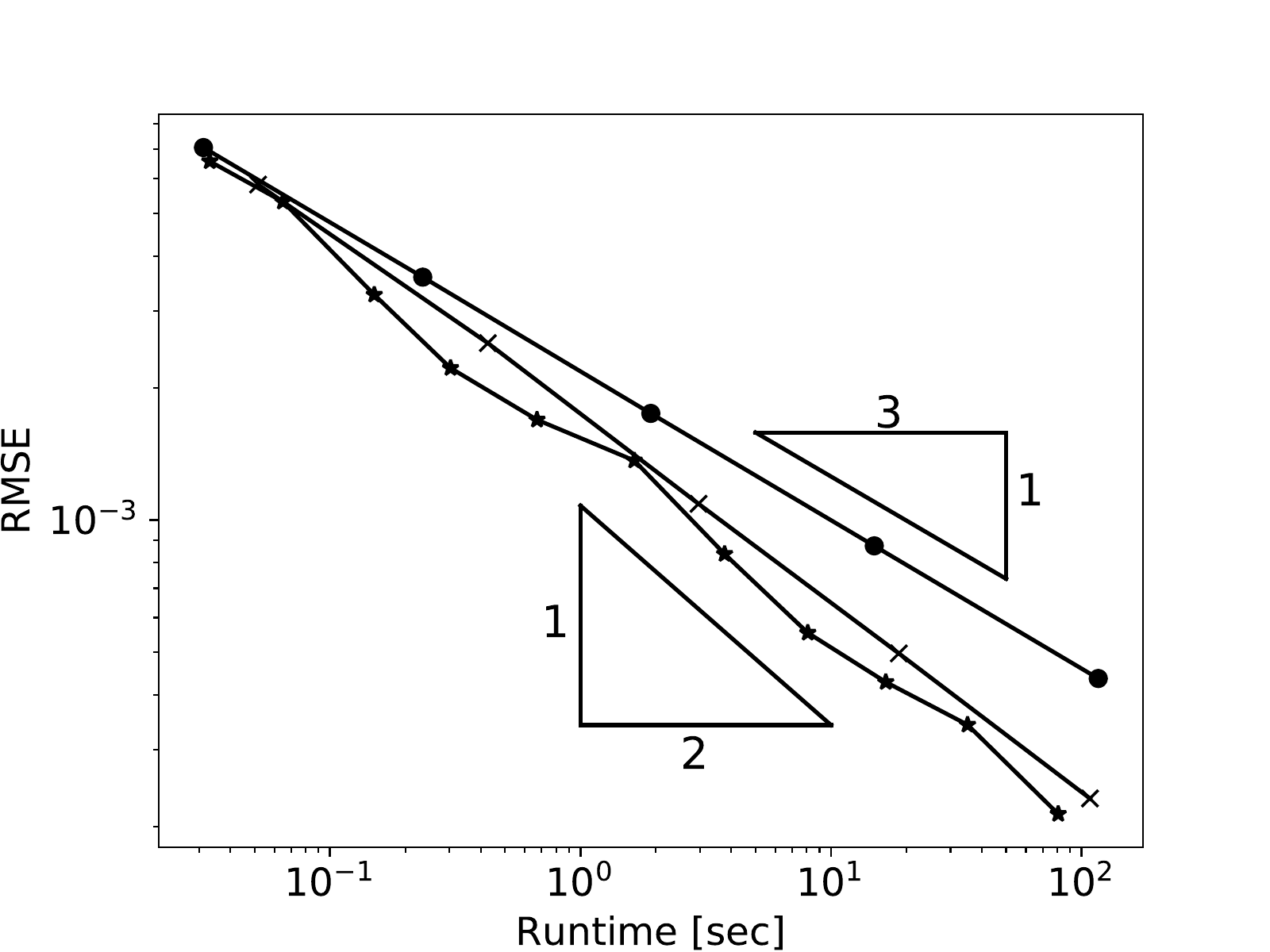}}
\includegraphics[width=0.47\textwidth]{{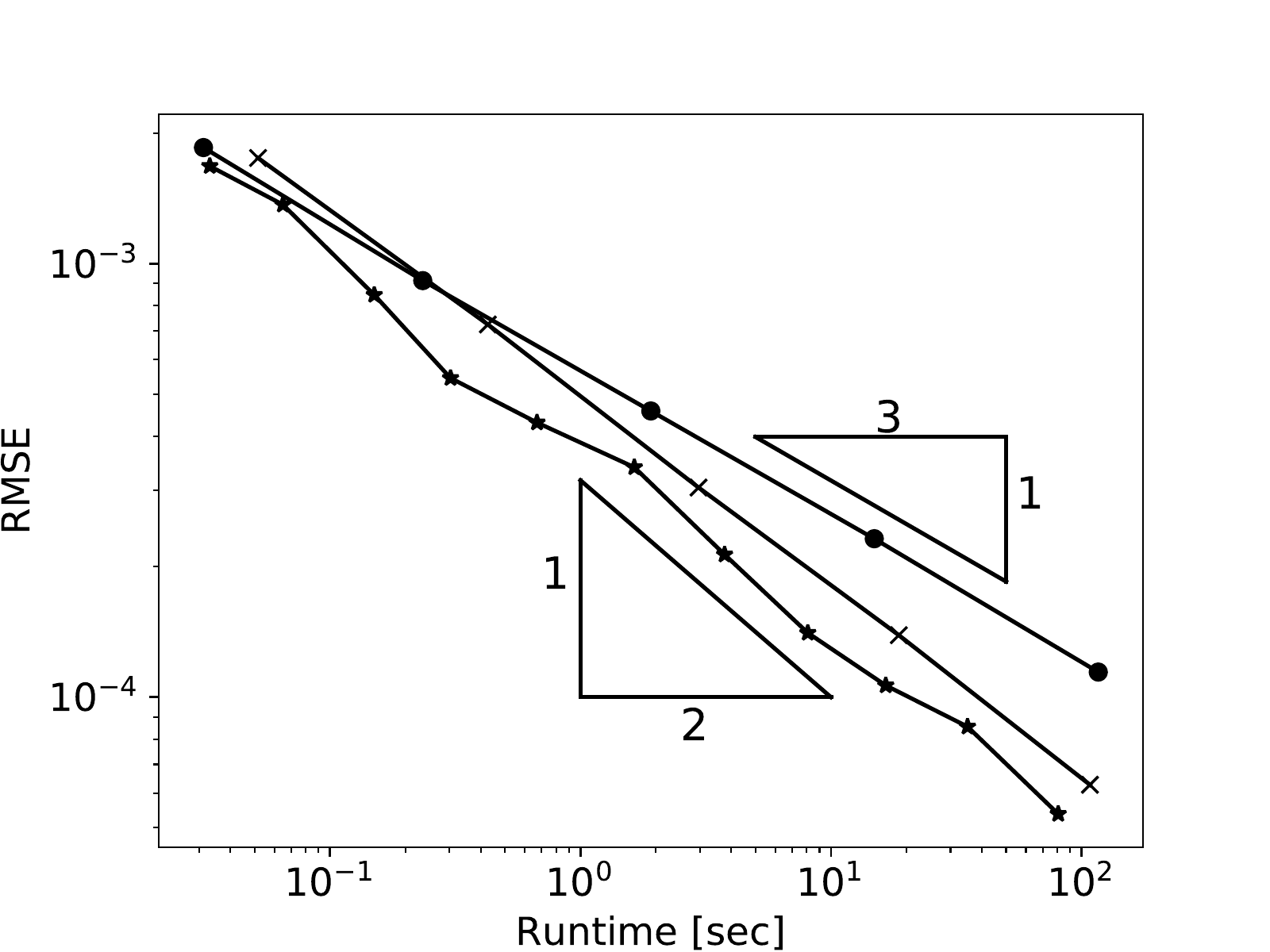}}
	\caption{Top row: comparison of the runtime versus RMSE for
		the QoIs mean (left) and variance (right) over $\cN=10$
		observation times for the problem in Section~\ref{ssec:ou}. The solid-crossed line represents MLEnKF, the solid-asterisk line represents "canonical" MLEnKF and the bottom reference triangle with the slope $\frac{1}{2}$, the solid-bulleted line represents EnKF and
		the upper reference triangle with the slope $\frac{1}{3}$.  Bottom row:
		similar plots over $\cN =20$ observation times.}
	\label{fig:fig1Ex1}
\end{figure}
\subsection{Double-well SDE} 
\label{ssec:dw}
We consider the SDE ~(\ref{sde:genform}) with the
double-well potential
\begin{equation*}\label{eq:dwPot}
V(u)=\frac{1}{2+4u^2}+\frac{u^2}{4}
\end{equation*}
and $u_0 \sim N(0, \Gamma)$. 
This SDE is the metastable motion of particles between the two wells,
cf.~\cite{conrad2018human,
  schutte2013metastability}. Figure~\ref{fig:fig1Ex2} shows the
dynamics of a particle over $\cN=20$ observation times. We observe
that the particle remains in either one of the wells for a relatively
long time; in other words, transitions between wells happen quite
rarely.
\begin{figure}[h!]
	\centering
	\includegraphics[width=0.54\textwidth]{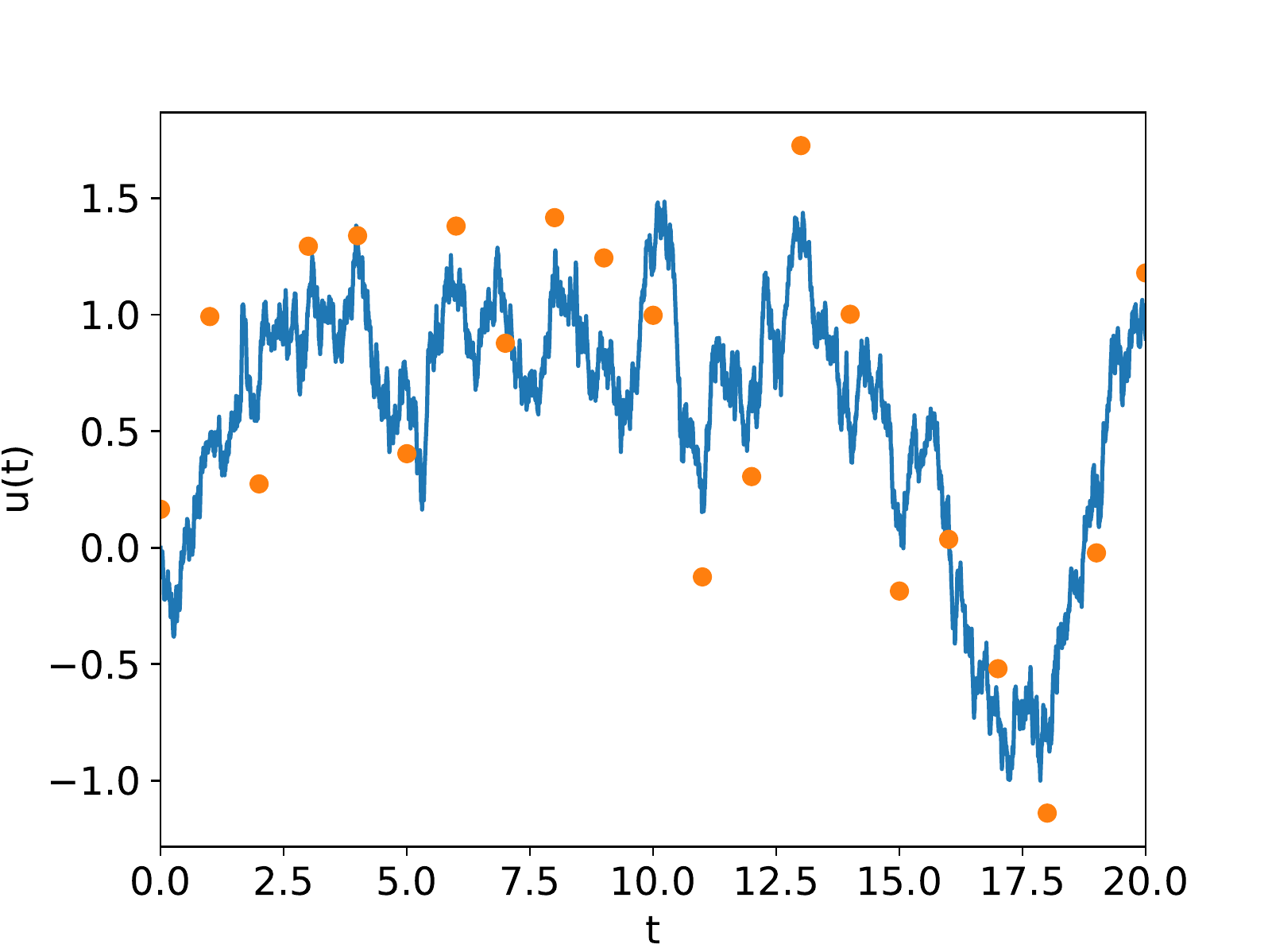}
	\caption{Realization of the double-well SDE from Section~\ref{ssec:dw} over time $\cN=20$ observation times (solid line) and observations (dots). }
	\label{fig:fig1Ex2}
\end{figure}
Figure~\ref{fig:fig2Ex2} illustrates the well transition of an EnKF ensemble over a few predict-update
iterations when the observations and the majority of the ensemble's particles are located in
opposite wells for the first few iterations.
\begin{figure}[h!]
	\centering
	\vspace*{-0.3cm}
	\hspace*{-1.5cm}
	\includegraphics[width=15.2cm, height=17.4cm]{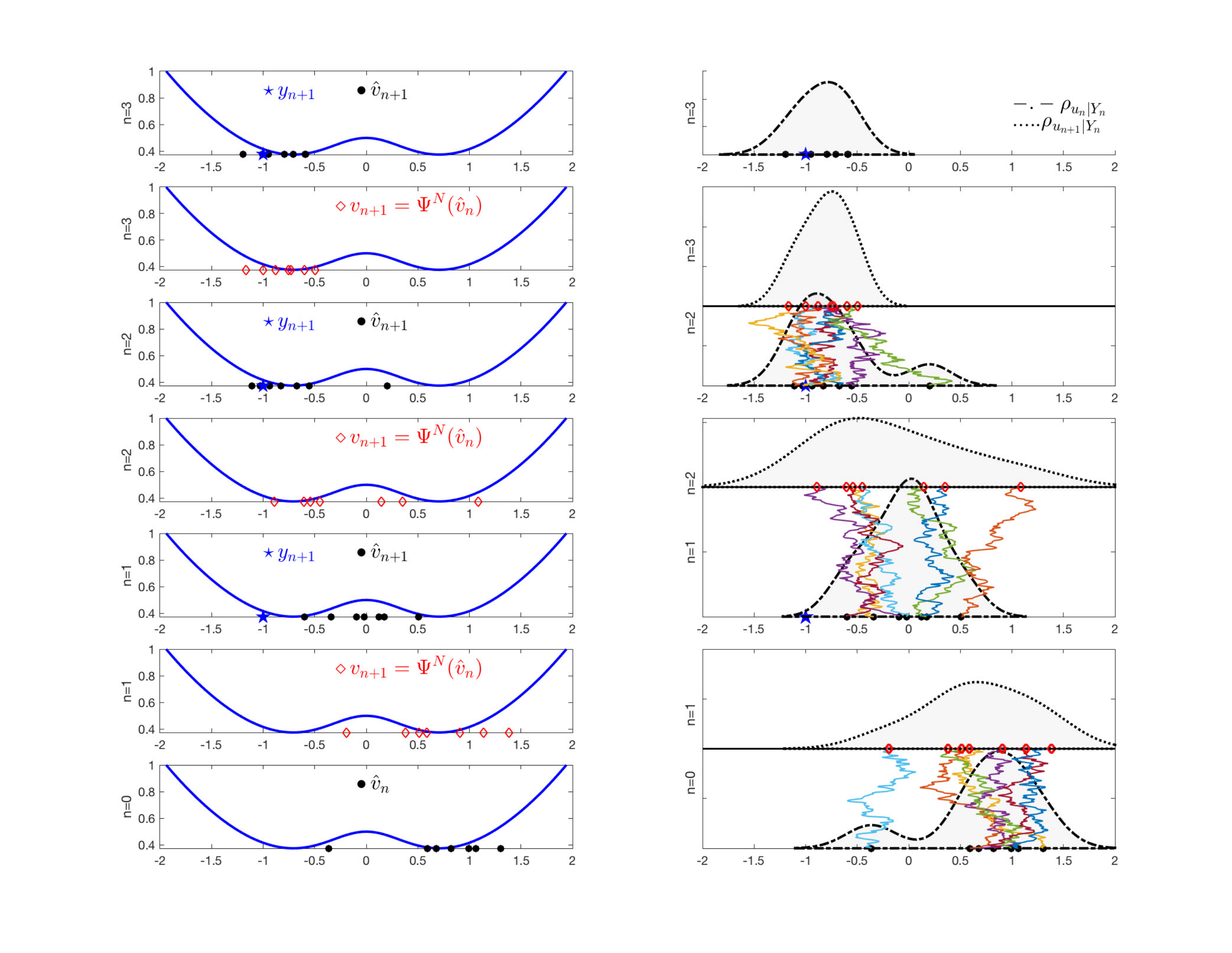}
	\vspace*{-1.5cm}
	\caption{Left column: Well transition of the EnKF ensemble
		when the measurements are located in the opposite
		well. Right column: Animation of the particle paths of the EnKF ensemble
		during the well-transition, and the resulting kernel 
		density estimations of the EnKF prediction and update densities (Section~\ref{ssec:dw}).}
	\label{fig:fig2Ex2}
	\vspace*{-0.5cm}
\end{figure}
Using the same $\epsilon$-input sequences as in the preceding example,
we study the performance of the three methods in terms of runtime
versus RMSE over timeframes of $\mathcal{N}=10$ and $\cN =20$
observation times in Figure~\ref{fig:fig3Ex2}.  Once again, the work rates are in agreement with theory
(Corollaries~\ref{cor:enkf},~\ref{cor:mlenkf} and
Remark~\ref{rem:oldMLEnKF}), that the new MLEnKF method performs
similarly as the ``canonical'' MLEnKF method, and that both methods
asymptotically outperform EnKF.
\begin{figure}[h!]
	\centering
	\includegraphics[width=0.47\textwidth]{{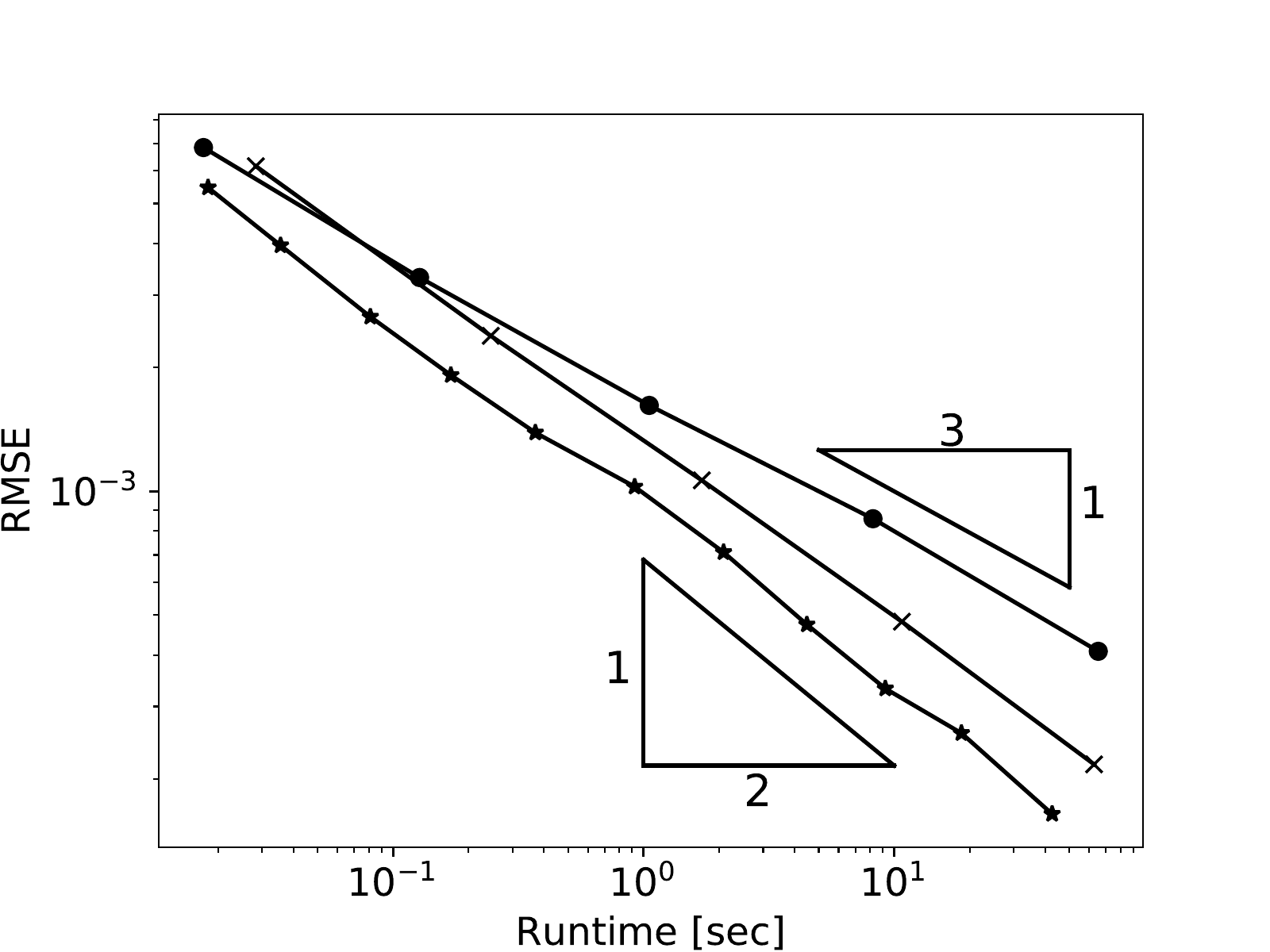}}
\includegraphics[width=0.47\textwidth]{{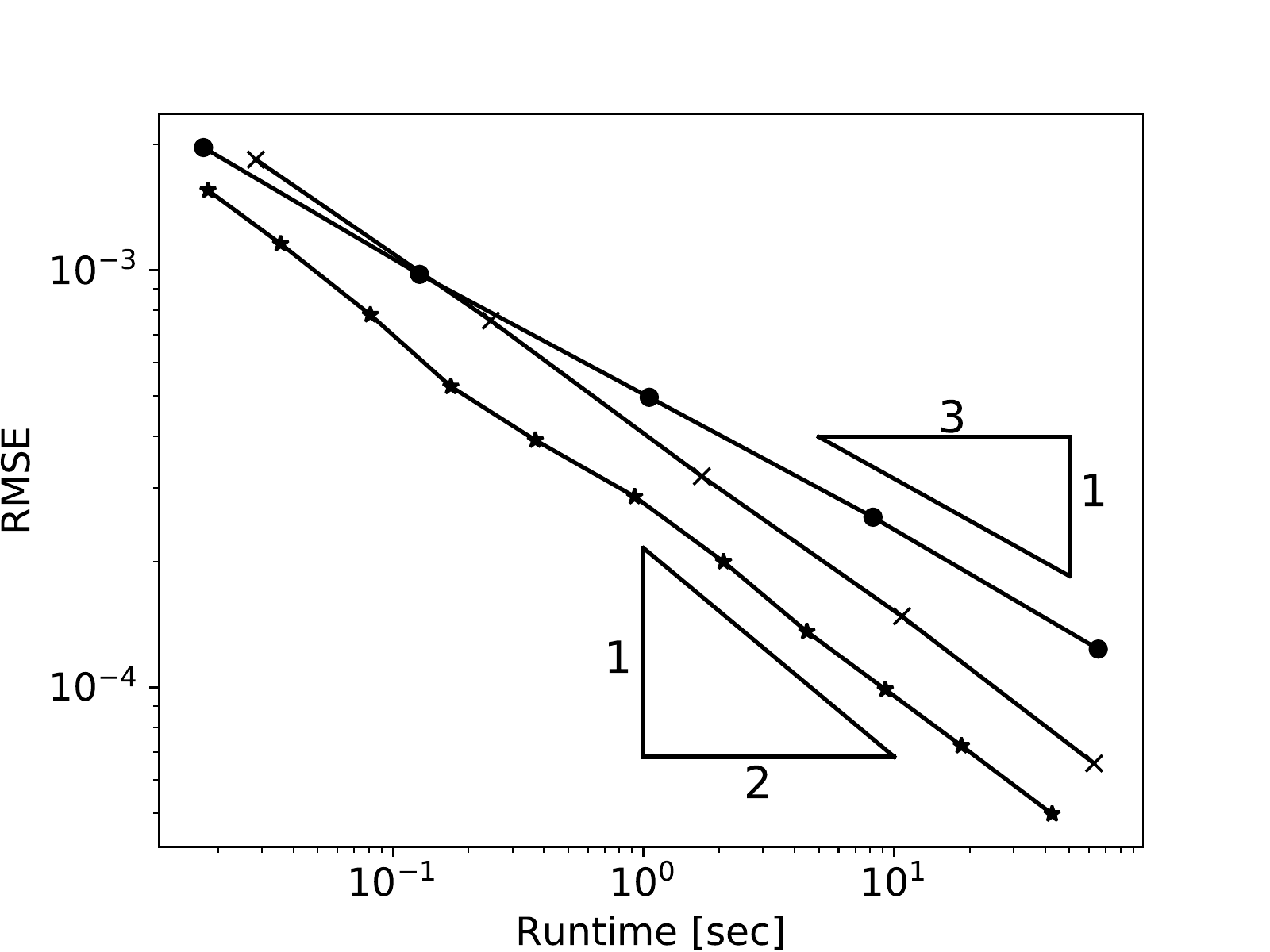}}\\
\includegraphics[width=0.47\textwidth]{{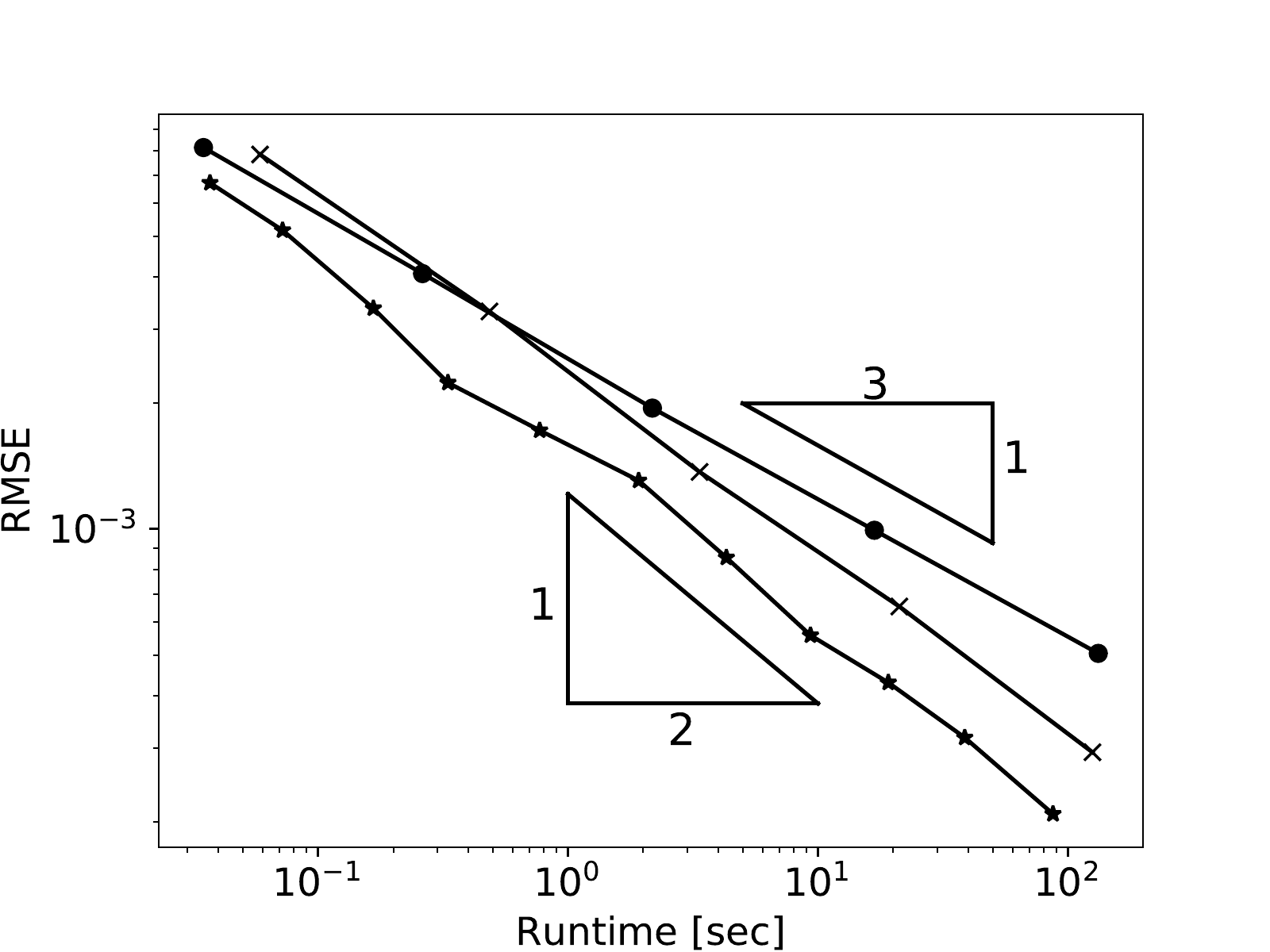}}
\includegraphics[width=0.47\textwidth]{{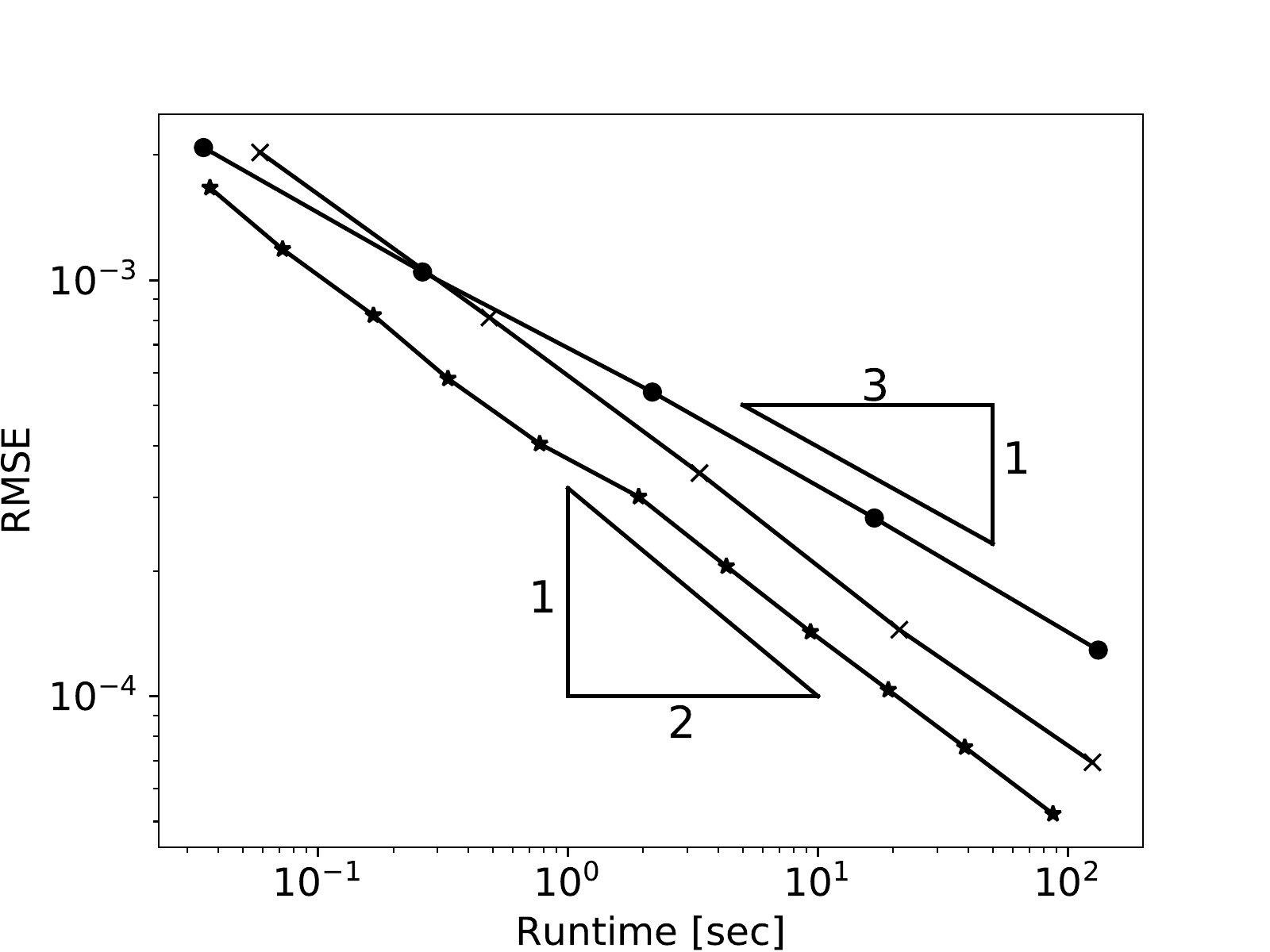}}
\caption{Top row: comparison of the runtime versus RMSE for the QoIs
  mean (left) and variance (right) over $\cN=10$ observation times for
  the problem in Section~\ref{ssec:dw}. The solid-crossed line
  represents MLEnKF, the solid-asterisk line represents "canonical"
  MLEnKF and the bottom reference triangle with the slope
  $\frac{1}{2}$, the solid-bulleted line represents EnKF and the upper
  reference triangle with the slope $\frac{1}{3}$.  Bottom row:
  similar plots over $\cN =20$ observation times }
	\label{fig:fig3Ex2}
\end{figure}

\appendix

\section{Mean-field EnKF}
\label{sec:MFEnKF}
The large-ensemble-limit of EnKF is referred to as the mean-field EnKF (MFEnKF).
In the mean-field limit, the EnKF Kalman gain becomes a deterministic
matrix. Consequently, there is no mixing between ensemble
particles and the particles become iid. To describe the dynamics
of MFEnKF, it thus suffices to consider a single particle.
For fixed dynamics resolution $\Psi^N$, let $\hat\vBar_{n}^N$ denote the updated state of a
mean-field particle at time $n$. The MFEnKF prediction and update dynamics
is given by 
\begin{equation}\label{mf:prediction}
\vBar_{n+1}^N = \Psi^N_n(\hat\vBar_{n}^N),
\end{equation}
and 
\[
\hat \vBar^N_{n+1}=(I-\bar K_{n+1}^N H)\vBar^N_{n+1}+\bar K_{n+1}^N{\tilde y}_{n+1},
\]
where 
\[
\bar K_{n+1}^N ={\bar C^N }_{n+1}H^\bT \prt{H{{\bar C^N}}_{n+1}H^\bT +\Gamma}^{-1} \\
\]
denotes the \emph{deterministic} mean-field Kalman gain,
\[
\bar C_{n+1}^N =\Ex{\prt{\vBar^N_{n+1}-\Ex{{\vBar}^N_{n+1}}}\prt{{\vBar}^N_{n+1}-\Ex{{\vBar}^N_{n+1}}}^\bT}
\]
is the mean-field prediction covariance, and 
\[
\yTilde{n+1}=y_{n+1} +\tilde{\eta}_{n+1}
\]
denotes the perturbed observation with 
$\{\tilde{\eta}_n\}_{n=1}^N$ being independent and $N(0,\Gamma)-$distributed random variables 
satisfying $\tilde{\eta}_j \perp u_k$ for all $j,k \ge 0$.

By Assumption~\ref{ass:psi} (i) with $|\kappa|_1 =0$, and using that $L_n^p(\Omega, \bR^d) \subset L^p(\Omega, \bR^d)$,
it follows straightforwardly by induction that 
$\vBar_n^N, \hat \vBar_n^N \in \cap_{p \ge 2} L^p(\Omega, \bR^d)$
uniformly in $N \in \bN \cup \{\infty\}$:
\begin{equation}\label{eq:inductionVBarBound}
\begin{split}
\hat \vBar_0^N \sim \Prob{u_0|Y_0} &\implies \vBar_1^N = \Psi^N_0(\hat \vBar_0^N)\in \cap_{p \ge 2} L^p(\Omega, \bR^d)\\
&\implies |\bar C_{1}^N|_2 < \infty
\implies  |\bar K_1|_2 < \infty\\
& \implies \|\hat \vBar^N_1 \|_p \le c_p (1+ |H|_2) \prt{\|\vBar^N_1\|_p  + \|\yTilde{1}\|_p} < \infty \quad \forall p \ge 2 \\
& \implies \ldots \implies \hat \vBar_n^N \in \cap_{p \ge 2} L^p(\Omega, \bR^d).
\end{split}
\end{equation}
This ensures the existence of the mean-field measure $\bar \mu_n^N$,
i.e., the distribution $\hat \vBar_n^N \sim \bar \mu_n^N$ satisfies
$\hat \vBar_n^N \in \cap_{p \ge 2} L^p(\Omega, \bR^d)$ for all $N\in \bN \cup \{\infty\}$ and $n \in \bN_0$.
For evaluating the expectation of a QoI $\varphi : \bR^d \to \bR$ wrt to
the mean-field measure $\bar \mu_n^N$,
we recall from~Notation~\ref{notation:not2} that 
\[
\bar \mu_n^N [\varphi] = \int_{\bR^d} \varphi(v)  \bar \mu_n^N(dv).
\]
For a subset of nonlinear dynamics $\Psi$ with additive Gaussian
noise, it has been shown that in the large-ensemble limit,
EnKF with perturbed observations
converges to the Kalman filtering distribution with the standard rate
$\cO(P^{-1/2})$, cf.~\cite{le2011large}.  That is, in $L^p$ for all QoI
$\varphi:\bR^d \to \bR$ with sufficient regularity $n \in \bN_0$, and
$p\ge 2$
\[
\norm{\mu_n^{\infty,P} [\varphi] - \bar \mu_n [\varphi]}_p \le c_{\varphi,p} P^{-1/2} \quad 
\text{for some} \quad C_{\varphi, \Psi^\infty}>0,
\]
where $\bar \mu_n := \bar \mu_n^\infty$.  The result was extended to a
subset of fully non-Gaussian models~\cite{law2016deterministic} with
numerical approximations of the dynamics in~\cite{hoel2016}, i.e.,
$\mu^{N,P}_n[\varphi] \to \bar \mu_n[\varphi]$ as $P,N \to \infty$
(see also Theorem~\ref{thm:enkfConv}).

\section{Theoretical proofs}
\label{sec:proofs}
\subsection{EnKF}
In this section, we present a collection of theoretical results for EnKF
that culminates with proof of Theorem~\ref{thm:enkfConv}. 

\subsubsection*{Notation}

The sample average of a QoI $\varphi \in \bF$ applied to an ensemble of $P$ identically distributed
particles $\{\vHat_{n,i}^{N,P}\}_{i=1}^P$ is defined as
\begin{equation*}\label{eq:sampleAverage}
E_P[\varphi(\vHat_{n}^{N,P})]:= \frac{1}{P} \sum_{i=1}^{P} \varphi(\vHat_{n,i}^{N,P}) \quad (= \mu_n^{N,P}[\varphi]),
\end{equation*}
and similarly
\begin{equation}\label{eq:sampleAverageParticle}
\begin{split}
E_P[\vHat_{n}^{N,P}] := \frac{1}{P} \sum_{i=1}^{P} \vHat_{n,i}^{N,P} \quad \mbox{  and  }\quad 
E_P[\prt{\vHat_{n}^{N,P}}\prt{\vHat_{n}^{N,P}}^\bT] := \frac{1}{P} \sum_{i=1}^{P} \prt{\vHat_{n,i}^{N,P}}\prt{\vHat_{n,i}^{N,P}}^\bT.
\end{split}
\end{equation}
\begin{lemma}[Difference between EnKF and MFEnKF Kalman gains]\label{lem:EnKFGainCont}
  If Assumption~\ref{ass:psi} holds, then 
  \begin{equation*}
	\begin{split}
	  \norm{K_n^{N,P}-\bar{K}_n^{N}}_p
          &\leq |\Gamma^{-1}|_2|H|_2\prt{1+2|\bar{K}_n^{N}H|_2}\norm{C_n^{N,P}-\bar{C}_n^{N}}_p
          \lesssim \norm{C_n^{N,P}-\bar{C}_n^{N}}_p
	\end{split}
  \end{equation*}
  for any $n \in \bN_0$ and $p\ge 2$.
\end{lemma}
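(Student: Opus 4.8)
The plan is to reduce the claim to a deterministic, pointwise operator-norm inequality that can then be integrated in $\omega$, exploiting that the mean-field objects $\bar C_n^N$ and $\bar K_n^N$ are deterministic while $C_n^{N,P}$ and $K_n^{N,P}$ are random. First I would set $S := HC_n^{N,P}H^\bT + \Gamma$ and $\bar S := H\bar C_n^N H^\bT + \Gamma$ and record that, since the biased sample covariance~\eqref{enkf:biasedCov} and the mean-field covariance are positive semidefinite, $HC_n^{N,P}H^\bT \succeq 0$ and $H\bar C_n^N H^\bT \succeq 0$, so $S \succeq \Gamma$ and $\bar S \succeq \Gamma$ hold $\bP$-a.s. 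Because $\Gamma$ is positive definite this makes $S,\bar S$ $\bP$-a.s. invertible with the pointwise bound $\abs{S^{-1}}_2 \le \abs{\Gamma^{-1}}_2$ (and likewise for $\bar S^{-1}$). This deterministic control of $S^{-1}$ is exactly what will let me pull the prefactor out of the $L^p(\Omega)$-norm at the end.

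Next I would derive an exact identity isolating $C_n^{N,P} - \bar C_n^N$. Adding and subtracting $\bar C_n^N H^\bT S^{-1}$ gives
\[
K_n^{N,P} - \bar K_n^N = (C_n^{N,P} - \bar C_n^N)H^\bT S^{-1} + \bar C_n^N H^\bT\prt{S^{-1} - \bar S^{-1}}.
\]
Using the resolvent identity $S^{-1} - \bar S^{-1} = \bar S^{-1}(\bar S - S)S^{-1}$, the relation $\bar S - S = -H(C_n^{N,P} - \bar C_n^N)H^\bT$, and $\bar C_n^N H^\bT \bar S^{-1} = \bar K_n^N$, the second summand collapses to $-\bar K_n^N H(C_n^{N,P} - \bar C_n^N)H^\bT S^{-1}$, so that
\[
K_n^{N,P} - \bar K_n^N = (I - \bar K_n^N H)(C_n^{N,P} - \bar C_n^N)H^\bT S^{-1}.
\]

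From this identity the bound is immediate. Taking $\abs{\cdot}_2$ pointwise in $\omega$ and using submultiplicativity, $\abs{H^\bT}_2 = \abs{H}_2$, the a.s. bound $\abs{S^{-1}}_2 \le \abs{\Gamma^{-1}}_2$, and $\abs{I - \bar K_n^N H}_2 \le 1 + \abs{\bar K_n^N H}_2 \le 1 + 2\abs{\bar K_n^N H}_2$, I would obtain the $\bP$-a.s. inequality
\[
\abs{K_n^{N,P} - \bar K_n^N}_2 \le \abs{\Gamma^{-1}}_2\abs{H}_2\prt{1 + 2\abs{\bar K_n^N H}_2}\abs{C_n^{N,P} - \bar C_n^N}_2.
\]
Since the prefactor is deterministic, raising to the $p$-th power, taking expectations, and then the $p$-th root preserves the inequality and yields precisely the first claimed bound.

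The asymptotic estimate $\lesssim \norm{C_n^{N,P} - \bar C_n^N}_p$ then only requires that the prefactor be finite and bounded uniformly in $N$ and $P$. It is $P$-independent outright, and uniform boundedness in $N$ follows from the mean-field estimates of Appendix~\ref{sec:MFEnKF}: the induction in~\eqref{eq:inductionVBarBound} gives $\sup_N \abs{\bar C_n^N}_2 < \infty$, whence $\abs{\bar K_n^N H}_2 = \abs{\bar C_n^N H^\bT \bar S^{-1}H}_2 \le \abs{\bar C_n^N}_2\abs{H}_2^2\abs{\Gamma^{-1}}_2$ is bounded uniformly in $N$. I expect the only genuinely non-routine step to be the algebraic collapse of the $S^{-1} - \bar S^{-1}$ term via the resolvent identity; everything else is submultiplicativity of $\abs{\cdot}_2$ together with the positivity bound $\abs{S^{-1}}_2 \le \abs{\Gamma^{-1}}_2$, and the dependence of the constant on $n$ through $\bar K_n^N$ is harmless precisely because it is $N,P$-independent.
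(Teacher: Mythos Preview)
Your proof is correct and follows the same resolvent-identity strategy that the paper invokes (citing \cite[Lemma~3.4]{hoel2016}). There is one minor but noteworthy difference in how you organize the decomposition. The paper's cited identity (cf.~the proof of Lemma~\ref{DoubleGainCont}) is
\[
K_n^{N,P}-\bar K_n^{N} = (I - K_n^{N,P}H)(C_n^{N,P}-\bar C_n^{N})H^\bT \bar S^{-1},
\]
i.e.\ it keeps the \emph{deterministic} inverse $\bar S^{-1}$ but carries the \emph{random} prefactor $(I-K_n^{N,P}H)$, which then has to be controlled separately (this is the origin of the constant $1+2|\bar K_n^N H|_2$). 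You instead isolate the deterministic prefactor $(I-\bar K_n^N H)$ at the cost of the random inverse $S^{-1}$, and then observe that $|S^{-1}|_2 \le |\Gamma^{-1}|_2$ holds $\bP$-a.s.\ anyway. Your route is slightly cleaner here and in fact yields the sharper constant $1+|\bar K_n^N H|_2$ before you loosen it. Both arguments reach the same asymptotic conclusion via the uniform-in-$N$ moment bounds~\eqref{eq:inductionVBarBound}.
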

\begin{proof}
The proof of the first inequality is analogous to \cite[Lemma 3.4]{hoel2016},
and the latter inequality follows from $\bar{v}^N_n \in \cap_{p\ge2}L^p(\Omega, \mathbb{R}^d)$, uniformly in $N\ge 1$, cf.~\eqref{eq:inductionVBarBound}.
\end{proof}

We next bound the  difference between  $C_n^{N,P} = \Cov [v_n^{N,P}]$ and $\bar{C}_n^{N} = \mathrm{Cov} [\bar{v}_n^{N}]$.
\begin{lemma}[Difference between EnKF and MFEnKF covariance matrices] \label{lem:EnKFCovErr}
	If Assumption~\ref{ass:psi} holds,
	then 
	\begin{equation*}
	\norm{C_n^{N,P}-\bar{C}_n^{N}}_p \lesssim \norm{v_n^{N,P}-\bar{v}_n^{N}}_{2p}+P^{-\frac{1}{2}}
	\end{equation*}
        for any	$n \in \bN_0$ and $p\ge2$.
\end{lemma}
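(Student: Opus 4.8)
The plan is to interpose the biased \emph{sample} covariance of a coupled mean-field ensemble between the two objects, thereby separating a pathwise coupling error from a purely statistical (Monte Carlo) error. Let $\{\bar v_{n,i}^N\}_{i=1}^P$ denote the $P$ iid mean-field prediction particles that are pairwise coupled to the EnKF prediction particles $\{v_{n,i}^{N,P}\}_{i=1}^P$ through shared initial data, driving noise, and perturbed observations, and set
\[
\cBarTilde_n^{N,P} := E_P[(\bar v_n^N)(\bar v_n^N)^\bT] - E_P[\bar v_n^N]\prt{E_P[\bar v_n^N]}^\bT .
\]
By the triangle inequality,
\[
\norm{C_n^{N,P} - \bar C_n^N}_p \le \norm{C_n^{N,P} - \cBarTilde_n^{N,P}}_p + \norm{\cBarTilde_n^{N,P} - \bar C_n^N}_p ,
\]
and the aim is to bound the first term by $\norm{v_n^{N,P} - \bar v_n^N}_{2p}$ and the second by $P^{-1/2}$.

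For the coupling term I would apply the algebraic identity $ab^\bT - cd^\bT = a(b-d)^\bT + (a-c)d^\bT$ to both the second-moment part $E_P[(v_n^{N,P})(v_n^{N,P})^\bT] - E_P[(\bar v_n^N)(\bar v_n^N)^\bT]$ and the outer-product-of-means part. Since all summands are identically distributed, the bound $\norm{E_P[X]}_p \le \norm{X_1}_p$ holds by the triangle inequality alone (no independence needed), and together with the Cauchy–Schwarz estimate $\norm{XY^\bT}_p \le \norm{X}_{2p}\norm{Y}_{2p}$ this reduces every resulting term to the form $\norm{v_n^{N,P}}_{2p}\,\norm{v_n^{N,P}-\bar v_n^N}_{2p}$ or $\norm{\bar v_n^N}_{2p}\,\norm{v_n^{N,P}-\bar v_n^N}_{2p}$. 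Invoking uniform-in-$(N,P)$ moment bounds $\sup_{N,P}\norm{v_n^{N,P}}_{2p}<\infty$ and $\sup_N \norm{\bar v_n^N}_{2p}<\infty$ (the latter supplied by~\eqref{eq:inductionVBarBound}) then yields $\norm{C_n^{N,P} - \cBarTilde_n^{N,P}}_p \lesssim \norm{v_n^{N,P}-\bar v_n^N}_{2p}$.

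For the statistical term, $\cBarTilde_n^{N,P}$ is precisely the biased sample covariance of $P$ iid copies of $\bar v_n^N$, whose population covariance is $\bar C_n^N$. Because $\bar v_n^N \in \cap_{p\ge2}L^p(\Omega,\bR^d)$ uniformly in $N$, standard $L^p$ concentration estimates for empirical moments of iid variables with finite $L^{2p}$ moments (Marcinkiewicz–Zygmund / Rosenthal-type inequalities) control the fluctuations of $E_P[(\bar v_n^N)(\bar v_n^N)^\bT]$ and $E_P[\bar v_n^N]$ at order $P^{-1/2}$, while the deterministic bias of the biased sample covariance is of order $P^{-1}$; combining these gives $\norm{\cBarTilde_n^{N,P}-\bar C_n^N}_p \lesssim P^{-1/2}$, and adding the two estimates completes the bound.

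The main obstacle I anticipate is securing the uniform-in-$(N,P)$ moment bound $\sup_{N,P}\norm{v_n^{N,P}}_{2p}<\infty$ for the EnKF prediction particles: unlike the mean-field case, these particles are coupled through the \emph{random} Kalman gain, so the bound must be propagated through the prediction–update recursion using Assumption~\ref{ass:psi}(i),(iii) together with the uniform boundedness of $I-K_n^{N,P}H$, presumably as part of the same inductive scheme that culminates in Theorem~\ref{thm:enkfConv}. A secondary technical point is the $L^p$ (rather than merely $L^2$) control of the sample-covariance fluctuation, which is exactly where the finiteness of the $2p$-th moments enters.
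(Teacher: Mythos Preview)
Your proposal is correct and follows the same decomposition as the paper: interpose the sample covariance $\cBarTilde_n^{N,P}$ of the coupled mean-field ensemble, bound the coupling term via H\"older/Cauchy--Schwarz and the statistical term via Marcinkiewicz--Zygmund.

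The one difference worth noting concerns precisely the obstacle you flag. Rather than assert $\sup_{N,P}\norm{v_n^{N,P}}_{2p}<\infty$ directly, the paper writes $\norm{v_n^{N,P}}_{2p}\le \norm{v_n^{N,P}-\bar v_n^{N}}_{2p}+\norm{\bar v_n^{N}}_{2p}$ and first records the self-contained intermediate bound
\[
\norm{C_n^{N,P}-\bar C_n^N}_p \lesssim \max\bigl(\norm{v_n^{N,P}-\bar v_n^N}_{2p},\,\norm{v_n^{N,P}-\bar v_n^N}_{2p}^2\bigr)+P^{-1/2},
\]
which requires no a~priori moment control of $v_n^{N,P}$. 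This version is what feeds into the induction of Lemma~\ref{lemma:EnKFdistEns}; once that lemma delivers $\norm{v_n^{N,P}-\bar v_n^N}_{2p}\lesssim P^{-1/2}\lesssim 1$, the quadratic term is absorbed into the linear one and the clean statement follows. This ordering sidesteps the circularity you were worried about without needing a separate uniform-moment argument for the EnKF particles.
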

\begin{proof}
In order to bound $\norm{C_n^{N,P}-\bar{C}_n^{N}}_p$, we introduce the auxiliary mean-field ensemble $\vBarHat_{n,1:P}^{N,P}= \{\vBarHat_{n,i}^{N,P}\}_{i=1}^P$ consisting of $P$ identically distributed
particles whose prediction/update dynamics is given by the mean-field Kalman gain, namely
\begin{equation}\label{eq:auxMFEnKF}
\begin{split}
  \bar v^{N,P}_{n+1,i} &= \Psi^N_n(\vBarHat_{n,i}^{N,P}),\\
  \vBarHat^{N,P}_{n+1,i} &= (I- \bar K^N_{n+1}H)\bar v^{N,P}_{n+1,i} + \bar K^N_{n+1} \tilde y_{n+1,i},
\end{split}
\end{equation}
with the initial condition $\vBarHat^{N,P}_{0,i} = \vHat_{0,i}^{N,P}$ for $i=1,...,P$.
The auxiliary ensemble satisfies the following two
crucial properties: 
$\bar v_{n}^{N,P} \stackrel{D}{=} \bar v_n$ and 
$\bar v_{n,i}^{N,P}$ has the same initial data, driving noise $W$
and perturbed observations as the $i$-th EnKF particle
$v_{n,i}^{N,P}$, cf.~Section~\ref{sec:EnKF}. In other words,
$\bar v_{n,i}^{N,P}$ is shadowing $v_{n,i}^{N,P}$.

Introducing the auxiliary ensemble covariance based on the mean-field prediction particles,
\begin{equation}\label{eq:mfenkfSampleCov}
  \cBarTilde_n^{N,P} := \Cov  [\bar{v}_n^{N,P}],
\end{equation}
the triangle inequality implies that 
\begin{equation}\label{covTrianIneq}
  \norm{C_n^{N,P}-\bar{C}_n^{N}}_p \leq \norm{C_n^{N,P}-\cBarTilde_n^{N,P}}_p +\norm{\cBarTilde_n^{N,P}-\bar{C}_n^{N}}_p.
\end{equation}
For the first term, recalling the notation~\eqref{eq:sampleAverageParticle}, we have
\begin{equation*}
  \begin{split}
    \norm{C_n^{N,P}-\cBarTilde_n^{N,P}}_p & \leq \norm{E_{P}[v_n^{N,P}\big((v_n^{N,P})^{\bT}-(\bar{v}_n^{N,P})^{\bT}\big)]}_p+\norm{E_{P}[\big(v_n^{N,P}-\bar{v}_n^{N,P}\big)(\bar{v}_n^{N,P})^{\bT}]}_p\\
	&+\norm{E_{P}[\bar{v}_n^{N,P}]E_{P}[(\bar{v}_n^{N,P})]^{\bT}-E_{P}[\bar{v}_n^{N,P}]E_{P}[(v_n^{N,P})]^{\bT}}_p\\
	&+\norm{E_{P}[\bar{v}_n^{N,P}]E_{P}[(v_n^{N,P})]^{\bT} -E_{P}[v_n^{N,P}]E_{P}[(v_n^{N,P})]^{\bT}}_p.
  \end{split}
\end{equation*}
Jensen's and H{\"o}lder's inequalities and $\bar{v}_n^{N,P} \in L^p(\Omega, \bR^d)$ yield that 
\begin{equation*}
  \begin{split}
    \norm{C_n^{N,P}-\cBarTilde_n^{N,P}}_p & \leq 2\norm{v_n^{N,P}}_{2p}\norm{v_n^{N,P}-\bar{v}_n^{N,P}}_{2p}+2\norm{\bar{v}_n^{N,P}}_{2p}\norm{v_n^{N,P}-\bar{v}_n^{N,P}}_{2p}\\
    &\leq 2\norm{v_n^{N,P}-\bar{v}_n^{N,P}}_{2p}^2+ 4\norm{\bar{v}_n^{N,P}}_{2p}\norm{v_n^{N,P}-\bar{v}_n^{N,P}}_{2p}\\
    &\lesssim \max\left(\norm{v_n^{N,P}-\bar{v}_n^{N,P}}_{2p}^2, \norm{v_n^{N,P}-\bar{v}_n^{N,P}}_{2p}\right).
  \end{split}
\end{equation*}
For the second term in~\eqref{covTrianIneq}, we have
\begin{equation*}
  \begin{split}
    \norm{\cBarTilde_n^{N,P}-\bar{C}_n^{N}}_p &\leq \norm{E_{P}[\bar{v}_n^{N,P}(\bar{v}_n^{N,P})^{\bT}]- \Ex{\bar{v}_n^{N}(\bar{v}_n^{N})^{\bT}}}_p\\
    &+\norm{E_{P}[\bar{v}_n^{N,P}]E_{P}[(\bar{v}_n^{N,P})^{\bT}]-\Ex{\bar{v}_n^{N}}
      \Ex{(\bar{v}_n^{N})^{\bT}}}_p\\
    &\lesssim P^{-\frac{1}{2}} \norm{\bar{v}_n^{N,P}(\bar{v}_n^{N,P})^{\bT}}_p\\
    & \lesssim P^{-\frac{1}{2}} \norm{\bar{v}_n^{N,P}}_{2p}^2,
  \end{split}
\end{equation*}
where the penultimate inequality is obtained by the Marcinkiewicz-Zygmund (M-Z) inequality~\cite[Theorem 5.2]{kwiatkowski2015convergence}.  
The property $\bar{v}_n^{N,P} \in L^p(\Omega,\bR^d)$, together with the above
inequalities imply that 
\begin{equation}\label{eq:firstBoundCovDiff}
  \norm{C_n^{N,P}-\bar{C}_n^{N}}_p \lesssim
  \max(\norm{v_n^{N,P}-\bar{v}_n^{N,P}}_{2p}, \norm{v_n^{N,P}-\bar{v}_n^{N,P}}_{2p}^2) +P^{-\frac{1}{2}}.
\end{equation}
Finally, by the proof of Lemma~\ref{lemma:EnKFdistEns}, it follows that $\norm{v_n^{N,P}-\bar{v}_n^{N,P}}_{2p} \lesssim 1$ and thus 
\begin{equation*}
  \norm{C_n^{N,P}-\bar{C}_n^{N}}_p \lesssim
  \norm{v_n^{N,P}-\bar{v}_n^{N,P}}_{2p} +P^{-\frac{1}{2}}.
\end{equation*}
\end{proof}

\begin{remark}\label{unbiasedCov}
  Lemma~\ref{lem:EnKFCovErr} straightforwardly extends to EnKF methods
  using an unbiased sample prediction covariance $\mathcal{C}_n^{N,P}$.
  By the following relationship between biased and unbiased sample
covariances
  \[
  \mathcal{C}_n^{N,P}:=\frac{P}{P-1}C_n^{N,P},
  \]
  we obtain that 
\[
\begin{split}
\norm{\mathcal{C}_n^{N,P}-\bar{C}_n^{N}}_p& \leq \norm{\mathcal{C}_n^{N,P}-C_n^{N,P}}_p + \norm{C_n^{N,P}-\bar{C}_n^{N}}_p  \\
&=\frac{P}{P-1} \norm{C_n^{N,P}-\bar{C}_n^{N}}_p+\frac{1}{P-1}\norm{\bar C_n^{N,P}}_p.
\end{split}
\]
The first term in the last equality is bounded by
Lemma~\ref{lem:EnKFCovErr}, and the bound for the second term follows
from $\bar{v}_n^{N,P} \in L^p(\Omega,\bR^d)$.  In the proof of
Theorem~\ref{thm:enkfConv}, the error contribution from the sample
covariance, be that a biased or an unbiased one, only enters through
Lemma~\ref{lem:EnKFCovErr}. This implies that the theorem holds and
the convergence rate is not affected when replacing biased with
unbiased sample covariances in the EnKF method.

Moreover, in the proof of Theorem~\ref{thm:mlenkfConv} for MLEnKF, the
error contribution from sample covariances only enter through
Corollary~\ref{covErr} and Lemma~\ref{lemma:dbCovErr}.  By a similar
argument as above, the rates of the said corollary and lemma are not
affected by replacing biased sample covariances by unbiased
ones and Theorem~\ref{thm:mlenkfConv} will also holds with the same
convergence rate. 
\end{remark}

\begin{lemma}[Distance between ensembles] \label{lemma:EnKFdistEns}
  If Assumption~\ref{ass:psi} holds, then 
  \begin{equation*}
    \max \prt{\norm{\vHat_{n}^{N,P}-\vBarHat_{n}^{N}}_p, \norm{v_{n}^{N,P}-\vBar_{n}^{N}}_p }\lesssim P^{-1/2} \quad \text{for any} \quad n \in \bN_0 \quad  \text{and} \quad p\ge 2.
  \end{equation*}
\end{lemma}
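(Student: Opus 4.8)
The plan is to prove the bound by induction on $n$, propagating the estimate through one prediction--update cycle of the shadowing coupling introduced in the proof of Lemma~\ref{lem:EnKFCovErr}, in which the auxiliary particle $\bar v_{n,i}^{N,P}$ shares the initial datum, driving noise and perturbed observations of $v_{n,i}^{N,P}$ and satisfies $\bar v_n^{N,P} \stackrel{D}{=} \bar v_n^N$. Because of the shared initial condition $\hat v_{0,i}^{N,P} = \hat{\bar v}_{0,i}^{N,P}$, the base case $n=0$ holds with both distances equal to $0$. Throughout, the induction is carried simultaneously for every $p \ge 2$, since controlling the covariance and Kalman-gain differences at a given level $p$ will require the distance between ensembles at higher integrability orders. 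For the prediction step at time $n+1$, the pairwise sharing of the driving noise together with the $L^p$-Lipschitz bound of Assumption~\ref{ass:psi}(iii) gives, particle-wise,
\[
\|v_{n+1}^{N,P} - \bar v_{n+1}^{N,P}\|_p = \|\Psi_n^N(\hat v_n^{N,P}) - \Psi_n^N(\hat{\bar v}_n^{N,P})\|_p \le c_p \|\hat v_n^{N,P} - \hat{\bar v}_n^{N,P}\|_p,
\]
so the prediction distance is controlled by the update distance at the previous time, which is $\lesssim P^{-1/2}$ in every $L^q$ by the inductive hypothesis.

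For the update step I would use the algebraic identity, obtained by adding and subtracting $(I - K_{n+1}^{N,P}H)\bar v_{n+1,i}^{N,P}$,
\[
\hat v_{n+1,i}^{N,P} - \hat{\bar v}_{n+1,i}^{N,P} = (I - K_{n+1}^{N,P}H)(v_{n+1,i}^{N,P} - \bar v_{n+1,i}^{N,P}) + (K_{n+1}^{N,P} - \bar K_{n+1}^N)(\tilde y_{n+1,i} - H\bar v_{n+1,i}^{N,P}).
\]
The first summand is handled by H\"older's inequality and a uniform $L^q$ bound on $|I - K_{n+1}^{N,P}H|_2$: since $HC_{n+1}^{N,P}H^\bT + \Gamma \succeq \Gamma$ gives $|(HC_{n+1}^{N,P}H^\bT + \Gamma)^{-1}|_2 \le |\Gamma^{-1}|_2$, one obtains $|K_{n+1}^{N,P}H|_2 \lesssim |C_{n+1}^{N,P}|_2$ and hence $\||I - K_{n+1}^{N,P}H|_2\|_q \lesssim 1$. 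The second summand is bounded by combining Lemma~\ref{lem:EnKFGainCont} and Lemma~\ref{lem:EnKFCovErr}, which give $\|K_{n+1}^{N,P} - \bar K_{n+1}^N\|_{2p} \lesssim \|C_{n+1}^{N,P} - \bar C_{n+1}^N\|_{2p} \lesssim \|v_{n+1}^{N,P} - \bar v_{n+1}^{N,P}\|_{4p} + P^{-1/2}$, while $\tilde y_{n+1,i} - H\bar v_{n+1,i}^{N,P}$ is bounded in every $L^q$ because $\bar v_{n+1}^{N,P} \in \cap_{q\ge2} L^q$ and $\tilde y_{n+1,i} = y_{n+1} + \eta_{n+1,i}$ with $y_{n+1}$ deterministic and $\eta_{n+1,i}$ Gaussian. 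Together these yield $\|\hat v_{n+1}^{N,P} - \hat{\bar v}_{n+1}^{N,P}\|_p \lesssim \|v_{n+1}^{N,P} - \bar v_{n+1}^{N,P}\|_{4p} + P^{-1/2}$, and feeding in the prediction estimate at level $4p$ closes the induction.

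The main obstacle is the apparent circularity: Lemma~\ref{lem:EnKFCovErr}, which I invoke above, is itself justified using ``$\|v_n^{N,P} - \bar v_n^{N,P}\|_{2p} \lesssim 1$'', a fact attributed to the present proof. I would break this by first running a separate, coarser induction establishing the uniform-in-$(N,P)$ boundedness $\|\hat v_n^{N,P}\|_q + \|\bar v_n^{N,P}\|_q \lesssim 1$ for all $q$, using Assumption~\ref{ass:psi}(i),(iii), the structural bounds $|K_{n}^{N,P}|_2, |I - K_{n}^{N,P}H|_2 \lesssim 1 + |C_{n}^{N,P}|_2$, and the resulting $L^q$-boundedness of the covariances. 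By the triangle inequality this immediately gives $\|v_n^{N,P} - \bar v_n^{N,P}\|_q \lesssim 1$, which both validates the use of Lemma~\ref{lem:EnKFCovErr} (collapsing the $\max(\cdot,\cdot^2)$ in~\eqref{eq:firstBoundCovDiff} to its linear part) and supplies the matrix-norm bounds above. Only after this boundedness is in hand do I run the sharp induction for the $P^{-1/2}$ rate. A secondary technical point is the bookkeeping of integrability: each application of H\"older and of Lemma~\ref{lem:EnKFCovErr} raises the required order (from $p$ to $4p$), which is harmless precisely because the statement is established for all $p \ge 2$ at once.
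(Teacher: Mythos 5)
Your proof is correct, and its skeleton matches the paper's: induction over time $n$, carried simultaneously for all $p\ge 2$, with the prediction step handled by Assumption~\ref{ass:psi}(iii) and the update step by the gain-continuity Lemma~\ref{lem:EnKFGainCont} combined with the covariance bound of Lemma~\ref{lem:EnKFCovErr}. The genuine difference is the algebraic splitting of the update difference, and it has a real consequence. The paper adds and subtracts so that the \emph{deterministic} mean-field gain sits in front of the particle difference, i.e.\ $\hat v_{n,i}^{N,P}-\hat{\bar v}_{n,i}^{N,P}=(I-\bar K_n^N H)(v_{n,i}^{N,P}-\bar v_{n,i}^{N,P})+(K_n^{N,P}-\bar K_n^N)(\tilde y_{n,i}-Hv_{n,i}^{N,P})$; the first term then needs only the scalar bound $|I-\bar K_n^N H|_2<\infty$, and the only random matrix that must be estimated is the gain/covariance difference, which is precisely the quantity made small by the induction. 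Your splitting instead puts the \emph{random} gain $(I-K_{n+1}^{N,P}H)$ in front, which forces you to control $\| |I-K_{n+1}^{N,P}H|_2\|_q$, hence $\| |C_{n+1}^{N,P}|_2\|_q$, hence uniform moments of the EnKF particles themselves --- exactly why you need your preliminary ``coarse'' boundedness induction, a step the paper's proof never requires. Your resolution of the circularity with Lemma~\ref{lem:EnKFCovErr} is likewise different but sound: you establish $\|v_n^{N,P}-\bar v_n^{N,P}\|_{2p}\lesssim 1$ up front via the boundedness lemma, whereas the paper resolves it \emph{inside} the sharp induction --- the hypothesis at time $n-1$ plus the Lipschitz bound gives $\|v_n^{N,P}-\bar v_n^{N,P}\|_{2p}\lesssim P^{-1/2}\le 1$ at time $n$, which collapses the quadratic term in the intermediate inequality~\eqref{eq:firstBoundCovDiff} with no extra lemma. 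What your route buys is modularity: the clean linear form of Lemma~\ref{lem:EnKFCovErr} is validated once and for all, and the logical dependence between the two lemmas is made explicit rather than interleaved; the price is the additional induction and slightly heavier moment bookkeeping (orders $p\to 4p$ per step, harmless since, as you note, all $p$ are carried at once).
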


\begin{proof}
Since $\vHat_{0}^{N,P} = \vBarHat_{0}^{N}$, the first half of the statement holds for $n=0$.
Assume that for some $n\ge1$,
$\|\vHat_{n-1}^{N,P}-\vBarHat_{n-1}^{N}\|_p \lesssim P^{-1/2}$
holds for all $p\ge 2$. 
Assumption~\ref{ass:psi}(iii) then implies that 
\begin{equation}\label{v-vBar}
\norm{ v_n^{N,P}-\bar{v}_n^{N}}_p \lesssim
\norm{\vHat_{n-1}^{N,P}- \vBarHat_{n-1}^{N}}_p \lesssim P^{-\frac{1}{2}}.
\end{equation}
By H{\"o}lder's inequality and Lemma~\ref{lem:EnKFGainCont}, 
\begin{equation}\label{vHat-vBarHat}
  \begin{split}
    \norm{\vHat_{n}^{N,P}-\vBarHat_{n}^{N}}_p &\lesssim |I-\bar{K}_n^{N} H|_{2} \norm{ v_n^{N,P}-\bar{v}_n^{N}}_{p} \\
    &+\norm{C_n^{N,P}-\bar{C}_n^{N}}_{2p}\big(\norm{v_n^{N,P}-\bar{v}_n^{N}}_{2p}+\norm{\tilde{y}_n -H\bar{v}_n^{N}}_{2p}\big).
  \end{split}
\end{equation}
Since $\bar{v}_n^{N}, \tilde{y}_n^\ell \in \cap_{r\ge 2} L^r(\Omega)$, inequalities~\eqref{v-vBar} and~\eqref{eq:firstBoundCovDiff} imply that 
\begin{equation*}
  \begin{split}
    \norm{\vHat_{n}^{N,P}-\vBarHat_{n}^{N}}_p &\lesssim 
    ( |I-\bar{K}_n^{N} H|_{2} + \norm{\tilde{y}_n -H\bar{v}_n^{N}}_{2p}) P^{-1/2}   + P^{-1}
    \lesssim P^{-1/2}.
  \end{split}
\end{equation*}
The argument holds for any $p \ge 2$, and the proof follows by induction.
\end{proof}

Before proving the main convergence result for EnKF, we introduce the notation
$\bar{\mu}_n^{N,P}[\varphi]$ to denote the average of the QoI $\varphi$ over the empirical measure associated with the
auxiliary mean-field ensemble $\{\vBarHat_{n,i}^{N}\}_{i=1}^P$, cf.~\eqref{eq:auxMFEnKF},
and we recall that $\bar{\mu}_n^{N}[\varphi]$ denotes the evaluation of empirical measure on QoI $\varphi$
associated with the MFEnKF $\vBarHat_{n}^{N}$, cf.~Section~\ref{sec:MFEnKF}.

\begin{proof}[Proof of Theorem~\ref{thm:enkfConv}]
By the triangle inequality,
\begin{equation*}\label{EnKFtriangle}
  \norm{\mu_n^{N,P}[\varphi]-\bar{\mu}_n[\varphi]}_p\leq
  \norm{\mu_n^{N,P}[\varphi]-\bar{\mu}_n^{N,P}[\varphi]}_p+
  \norm{\bar{\mu}_n^{N,P}[\varphi]-\bar{\mu}_n^N[\varphi]}_p+\norm{\bar{\mu}_n^{N}[\varphi]-\bar{\mu}_n[\varphi]}_p.
\end{equation*}
By Assumption~\ref{ass:psi}(ii), $\varphi \in \bF \subset C^2_P(\bR^d,\bR)$,
and Lemma~\ref{lemma:EnKFdistEns} implies that 
\[
\norm{\mu_n^{N,P}[\varphi]-\bar{\mu}_n^{N,P}[\varphi]}_p
= \norm{\frac{1}{P} \sum_{i=1}^P\varphi(\hat{v}_{n,i}^{N,P})-\varphi(\vBarHat_{n,i}^{N})}_p
\leq c_{\varphi} \norm{\widehat{v}_{n}^{N,P}-\vBarHat_{n}^{N}}_{2p} \lesssim P^{-1/2}.
\]
For the second term, using the Marcinkiewicz--Zygmund (M-Z) inequality, that $\varphi(\vBarHat_{n}^{N}) \in \cap_{p\ge2}L^p(\Omega, \bR)$ and that
$\vBarHat_{n,1}^{N,P}, \ldots, \vBarHat_{n,P}^{N,P}$ are iid with $\vBarHat_{n,i}^{N,P} \stackrel{D}{=} \vBarHat_{n}^{N}$
imply that 
\[
\norm{\bar{\mu}_n^{N,P}[\varphi]-\bar{\mu}_n^N[\varphi]}_p=\norm{\sum_{i=1}^{P}\frac{\varphi(\vBarHat_{n,i}^{N})-\Ex{\varphi(\vBarHat_{n}^{N})}}{P}}_p
\lesssim P^{-1/2}.
\]
For the last term, we have that 
\[
\norm{\bar{\mu}_n^{N}[\varphi]-\bar{\mu}_n[\varphi]}_p=\abs{\Ex{\varphi(\vBarHat_{n}^{N})-\varphi(\vBarHat_{n})}},
\]
and it remains to prove by induction that the right-hand side is $\cO(N^{-\alpha})$.

Since $\vBarHat_{0}^{N} \stackrel{D}{=} \vBarHat_{0}$, it holds that $\abs{\Ex{\varphi(\vBarHat_{0}^{N})-\varphi(\vBarHat_{0})}} =0$. Assume that for some $k\ge 1$, there exists an observable dependent constant
$c_{\varphi}>0$ such that
\[
\abs{\Ex{\varphi(\vBarHat_{k-1}^{N})-\varphi(\vBarHat_{k-1})}} \le c_{\varphi} N^{-\alpha} \quad \forall \varphi \in \bF.
\]
Then, Assumption~\ref{ass:psi}(ii) implies there exists a $\tilde c_{\varphi} >0$ such that 
\begin{equation}\label{eq:ass1iiConsequence}
\abs{\Ex{\varphi(\vBar_{k}^{N})-\varphi(\vBar_{k})}} \le \tilde c_{\varphi} N^{-\alpha} \quad \forall \varphi \in \bF.
\end{equation}
In order to bound $ \abs{\Ex{\varphi(\vBarHat_{k}^{N})-\varphi(\vBarHat_{k})}} $, we first recall that
\[
\begin{split}
  \vBarHat_{k}^{N} &= (I- \bar{K}_k^NH)\vBar_{k}^{N} + \bar{K}^N_k y_n  + \bar{K}^N_k \tilde{\eta}_k\\
  \vBarHat_{k} &= (I- \bar{K}_kH)\vBar_{k} + \bar{K}_k y_n  + \bar{K}_k \tilde{\eta}_k,
\end{split}
\]
with $\tilde \eta_{k} \sim N(0,\Gamma)$  
and introduce the functions $\tilde \varphi^{N},\tilde \varphi \in \bF$ defined by
\[
\tilde \varphi^N(x) =
\frac{1}{\sqrt{ (2 \pi)^p |\mathrm{det}(\Gamma)|} }
\int_{\bR^p}\varphi\Big( (I- \bar{K}^N_kH)x + \bar{K}^N_k y_k+ \bar{K}^N_k z \Big) e^{-z^\bT \Gamma^{-1} z/2}\, dz
\]
and
\[
\tilde \varphi(x) =
\frac{1}{\sqrt{ (2 \pi)^p |\mathrm{det}(\Gamma)|} }
\int_{\bR^p}\varphi\Big( (I- \bar{K}_kH)x + \bar{K}_k y_k+ \bar{K}_k z \Big) e^{-z^\bT \Gamma^{-1} z/2}\, dz.
\]
It then follows by the mean-value theorem that 
\[
\begin{split}
  \abs{\Ex{\varphi(\vBarHat_{k}^{N})-\varphi(\vBarHat_{k})}} &\le
  \abs{\Ex{\tilde \varphi^N(\vBar_{k}^{N})-\tilde \varphi^N(\vBar_{k})}} +
  \abs{\Ex{\tilde \varphi^N(\vBar_{k})-\tilde \varphi(\vBar_{k})}}\\
  &\lesssim N^{-\alpha} +  |\bar{K}_k-\bar{K}^N_k|_2.
\end{split}
\]
From~\cite[Lemma 3.4]{hoel2016}, we have that 
\[
\bar{K}_k-\bar{K}^N_k = \bar{K}_k H(\bar{C}_{k}^{N}-\bar{C}_k)H^{\bT}(H\bar{C}_{k}^N H^{\bT}+\Gamma)^{-1}
+(\bar{C}_k-\bar{C}_k^{N})H^{\bT}(H\bar{C}_{k}^{N}H^{\bT}+\Gamma)^{-1},
\]
which, since $\Gamma$ is positive definite and thus $|(H\bar{C}_{k}^{N}H^{\bT}+\Gamma)^{-1}|_{2} \lesssim |\Gamma^{-1}|_2 < \infty$, implies that
\[
\begin{split}
|\bar{K}_k-\bar{K}^N_k|_2 &\lesssim |\bar{C}_{k}^{N}-\bar{C}_k|_2\\
& \le \abs{\Ex{\bar v^N_k (\bar v_k^N)^\bT - \bar v_k (\bar v_k)^\bT } }_2
+ \abs{\Ex{\bar v^N_k} \Ex{ (\bar v_k^N)^\bT} - \Ex{\bar v_k}\Ex{ (\bar v_k)^\bT} }_2\\
&\lesssim N^{-\alpha}.
\end{split}
\]
Since all monomials of degree $1$ and $2$ are contained in $\bF$,
the last inequality follows from~\eqref{eq:ass1iiConsequence}
and the equivalence of the Euclidean and Frobenius norms in any dimension $d<\infty$.
It holds by induction that for any $n\ge 0$,
\begin{equation*}\label{eq:weakConvRate}
| \mathbb{E}[\varphi(\vBarHat_{n}^{N})-\varphi(\vBarHat_{n})]| \lesssim  N^{-\alpha}.
\end{equation*}
\end{proof}

\subsection{MLEnKF}

In this section, we present a collection of theoretical results for MLEnKF,
including the proof of Theorem~\ref{thm:mlenkfConv}.

In order to obtain a connection between $\{\vHat_{n,i}^{\ell,\ff}\}_{i=1}^{P_\ell}$ and 
$\{\vHat_{n,i}^{\ell,\fc}\}_{i=1}^{P_\ell} = \{\vHat_{n,i}^{\ell,\fc_1}\}_{i=1}^{P_{\ell-1}} \cup  \{\vHat_{n,i}^{\ell,\fc_2}\}_{i=1}^{P_{\ell-1}}$
in the superindex $ \fc_j  \leftrightarrow \ff_j$, we introduce 
\[
\vHat_{n,i}^{\ell,\ff_1} := \vHat_{n,i}^{\ell,\ff}  \quad \text{and} \quad 
\vHat_{n,i}^{\ell,\ff_2} := \vHat_{n,i+P_{\ell-1}}^{\ell,\ff} \quad \text{for} \quad i=1,\ldots, P_{\ell-1},
\]
and
\[
\mu_{n}^{\ell,\ff_j}[\varphi]:= E_{P_{\ell-1}}[\varphi(\vHat_{n}^{\ell,\ff_j} )]
= \frac{1}{P_{\ell-1}} \sum_{i=1}^{P_{\ell-1}} \varphi(\vHat_{n,i}^{\ell,\ff_j}) \quad \text{for} \quad j=1,2.
\]  
The random variable $\vHat_{n}^{\ell,\ff_j}$
has the same driving noise and perturbed observations
as $\vHat_{n}^{\ell,\fc_j}$ and we note that 
\[
\begin{split}
\prt{\mu_{n}^{\ell,\ff} - \frac{\mu_{n}^{\ell,\fc_1} + \mu_{n}^{\ell,\fc_2}}{2}} [\varphi] &=
E_{P_\ell}[ \varphi(\vHat_{n}^{\ell,\ff}) - \varphi(\vHat_{n}^{\ell,\fc})]\\
&= \frac{1}{2}\sum_{j=1}^2 \prt{\mu_{n}^{\ell,\ff_j} - \mu_{n}^{\ell,\fc_j}}[\varphi].
\end{split}
\]

We further introduce the auxiliary mean-field MLEnKF ensemble
$\{\vBarHat^{\ell,\ff}_{n,1:P_\ell},\vBarHat^{\ell,\fc}_{n,1:P_\ell}\}_{\ell=0}^L$
where the dynamics for
$\vBarHat^{\ell,\ff}_{n,i}$ and $\vBarHat^{\ell,\fc}_{n,i}$ are coupled
through driving-noise-sharing dynamics 
\begin{equation}\label{eq:mfmlenkfDynamics}
\vBar^{\ell,\ff}_{n+1,i} = \Psi^{N_\ell}_n(\vBarHat^{\ell,\ff}_{n,i}), \qquad 
\vBar^{\ell,\fc}_{n+1,i} = \Psi^{N_\ell}_n(\vBarHat^{\ell,\fc}_{n,i})
\end{equation}
and perturbed-observation-sharing updates
\begin{equation}\label{eq:mfmlenkfUpdates}
\vBarHat^{\ell,\ff}_{n+1,i} = (I- \bar{K}^{\ell,\ff}_n H)\vBar^{\ell,\ff}_{n,i} + \bar{K}^{\ell,\ff}_n \tilde y_{n,i}^\ell, \qquad 
\vBarHat^{\ell,\fc}_{n+1,i} = (I- \bar{K}^{\ell,\fc}_n H)\vBar^{\ell,\fc}_{n,i} + \bar{K}^{\ell,\fc}_n \tilde y_{n,i}^\ell,
\end{equation}
where $\bar{K}^{\ell,\ff}_n :=\bar{K}^{N_\ell}_n$ and $\bar{K}^{\ell,\fc}_n :=\bar{K}^{N_{\ell-1}}_n$,
cf.~Section~\ref{sec:MFEnKF},
and with initial conditions equal identical to MLEnKF:
\begin{equation*}\label{eq:initalAuxML}
\begin{split}
 \vBarHat^{0,\ff}_{0,i} &:= \vHat_{0,i}^{\ell,\ff}\\
  \vBarHat^{\ell,\ff}_{0,i} = \vBarHat^{\ell,\fc}_{0,i} &:= \vHat_{0,i}^{\ell,\ff} = \vHat_{0,i}^{\ell,\fc} \quad   \ell \ge 1.
\end{split}
\end{equation*}
The particles $(\vBarHat^{\ell,\ff}_{n,i}, \vBarHat^{\ell,\fc}_{n,i})$ are thus
coupled through sharing the same initial condition, driving noise $W$, and perturbed observations.
The particle pair is also coupled to the MLEnKF pair $(\vHat_{n,i}^{\ell,\ff},\vHat_{n,i}^{\ell,\fc})$
in all the same ways. In other words, $(\vBarHat^{\ell,\ff}_{n,i}, \vBarHat^{\ell,\fc}_{n,i})$ is shadowing $(\vHat_{n,i}^{\ell,\ff},\vHat_{n,i}^{\ell,\fc})$.
The ensemble
$\vBarHat^{\ell,\ff}_{n,1:P_\ell} = \vBarHat^{\ell,\ff_1}_{n,1:P_{\ell-1}} \cup \vBarHat^{\ell,\ff_2}_{n,1:P_{\ell-1}}$
induces an empirical measure $\bar \mu_n^{\ell,\ff} = (\bar \mu_n^{\ell,\ff_1}+\bar \mu_n^{\ell,\ff_2})/2$ and
$\vBarHat^{\ell,\fc}_{n,1:P_\ell} = \vBarHat^{\ell,\fc_1}_{n,1:P_{\ell-1}} \cup \vBarHat^{\ell,\fc_2}_{n,1:P_{\ell-1}}$
induces $\bar \mu_n^{\ell,\fc} = (\bar \mu_n^{\ell,\fc_1}+\bar \mu_n^{\ell,\fc_2})/2$, with the convention that
$\bar \mu_n^{0,\fc} = \bar \mu_n^{0,\fc_1}= \bar \mu_n^{0,\fc_2} =0$ for all $n\ge 0$. 
Employing sequences $\{M_\ell\},\{N_\ell\} \subset \bN$ and $L \in \bN$ with exactly the same values
as those used for the MLEnKF estimator~\eqref{eq:mlEstimator} we seek to shadow,
we define the auxiliary mean-field MLEnKF estimator by
\begin{equation}\label{eq:MFMLEnKF_est}
\bar \mu^{\ML}_n[\varphi] = \sum_{\ell=0}^L \frac{1}{M_\ell} \sum_{m=1}^{M_\ell}
\prt{\bar \mu_{n}^{\ell,\ff,m} - \bar \mu_{n}^{\ell,\fc,m}}[\varphi] 
\end{equation}
where for $m=1,\ldots, M_\ell$,
$\prt{\bar \mu_{n}^{\ell,\ff,m} - \bar \mu_{n}^{\ell,\fc,m}}[\varphi]$
are iid realizations that are coupled to/shadowing
$\prt{\mu_{n}^{\ell,\ff,m} - \mu_{n}^{\ell,\fc,m}}[\varphi]$ by
sharing the same underlying randomness (initial conditions,
driving noise and perturbed observations).
And, consequently, $\bar \mu^{\ML}_n[\varphi]$ is
shadowing $\mu^{\ML}_n[\varphi]$.

\begin{proof}[Proof of Theorem~\ref{thm:mlenkfConv}]
  
By the triangle inequality,
\begin{equation}\label{3term}
\begin{split}
  \norm{\mu_n^{\ML}[\varphi]- \bar{\mu}_n[\varphi]}_p &\leq \norm{\mu_n^{\ML}[\varphi]-\bar{\mu}_n^{\ML}[\varphi]}_p+  \norm{\bar{\mu}_n^{\ML}[\varphi]-\bar{\mu}_n^{N_L}[\varphi]}_p\\
  &+\norm{\bar{\mu}_n^{N_L}[\varphi]-\bar{\mu}_n[\varphi]}_p.
\end{split}
\end{equation}
By~\eqref{eq:mlEstimator} and~\eqref{eq:MFMLEnKF_est}, $\Delta \tilde \mu^\ell_n[\varphi] := \Ex{\prt{\bar \mu_{n}^{\ell,\ff} - \mu_{n}^{\ell,\ff} + \mu_{n}^{\ell,\fc} - \bar \mu_{n}^{\ell,\fc} }[\varphi]}$ and the M-Z inequality,
\[
\begin{split}
\|\mu_n^{\ML}[\varphi]&-\bar{\mu}_n^{\ML}[\varphi]\|_p \le  \norm{\sum_{\ell=0}^L \Delta \tilde \mu^\ell_n[\varphi]}_p\\
& +\norm{ \sum_{\ell=0}^L \sum_{m=1}^{M_\ell}
  \frac{\prt{\bar \mu_{n}^{\ell,\ff,m} - \mu_{n}^{\ell,\ff,m} + \mu_{n}^{\ell,\fc,m} - \bar \mu_{n}^{\ell,\fc,m} }[\varphi] - \Delta \tilde \mu^\ell_n[\varphi]}{M_\ell} }_p \\
&\lesssim \norm{\Ex{\prt{\bar \mu_{n}^{L,\ff} - \mu_{n}^{L,\ff} }[\varphi]}}_p + \sum_{\ell=0}^L M_\ell^{-1/2} \norm{\prt{\bar \mu_{n}^{\ell,\ff} - \mu_{n}^{\ell,\ff} + \mu_{n}^{\ell,\fc} - \bar \mu_{n}^{\ell,\fc} }[\varphi]}_p,
\end{split}
\]
where we used that $\Ex{\mu_{n}^{\ell,\fc}} = \Ex{\mu_{n}^{\ell-1,\ff}}$ and $\Ex{\bar \mu_{n}^{\ell,\fc}} = \Ex{\bar \mu_{n}^{\ell-1,\ff}}$
for $\ell\ge1$ in the last inequality.
The $L^p(\Omega)$-convergence $\mu_{n}^{L,\ff}[\varphi] \to \bar{\mu}_{n}^{L,\ff}[\varphi]$ as $L\to \infty$, cf.~Theorem~\ref{thm:enkfConv},
further implies that
\[
\prt{\bar \mu_{n}^{L,\ff} - \mu_{n}^{L,\ff} }[\varphi] =
-\sum_{\ell=L+1}^\infty \Ex{\prt{\bar \mu_{n}^{\ell,\ff} - \mu_{n}^{\ell,\ff} + \mu_{n}^{\ell,\fc} - \bar \mu_{n}^{\ell,\fc} }[\varphi]},
\]
and Jensen's inequality and Lemma~\ref{lemma:dbDiffphi} yield
\[
\begin{split}
\|\mu_n^{\ML}[\varphi]-\bar{\mu}_n^{\ML}[\varphi]\|_p &\lesssim
\sum_{\ell=0}^L M_\ell^{-1/2} \norm{\prt{\bar \mu_{n}^{\ell,\ff} - \mu_{n}^{\ell,\ff} + \mu_{n}^{\ell,\fc} - \bar \mu_{n}^{\ell,\fc} }[\varphi]}_p\\
&+
\sum_{\ell=L+1}^\infty \norm{\prt{\bar \mu_{n}^{\ell,\ff} - \mu_{n}^{\ell,\ff} + \mu_{n}^{\ell,\fc} - \bar \mu_{n}^{\ell,\fc} }[\varphi]}_p\\
&\lesssim N_{L}^{-\beta/2}P_L^{-1/2}+P_L^{-1} + \sum_{\ell=0}^L M_\ell^{-1/2}(N_\ell^{-\beta/2}P_\ell^{-1/2}+P_\ell^{-1}).
\end{split}
\]

For the second term in~\eqref{3term}, note first that
\[
\Ex{ \bar \mu_n^{N_L}[\varphi] } = \Ex{ \bar \mu_n^{L,\ff}[\varphi] }  = \sum_{\ell=0}^L \Ex{ (\bar \mu_n^{\ell,\ff}- \bar \mu_n^{\ell,\fc})[\varphi] }
= \sum_{\ell=0}^L \Ex{\varphi(\vBarHat^{\ell,\ff}_n) - \varphi(\vBarHat^{\ell,\fc}_n)}.
\]
By applying the M-Z inequality twice (first in $M_\ell$ and thereafter in $P_\ell$) and Lemma~\ref{lemma:distMFens},
\begin{align*}
\begin{split}
  \norm{\bar{\mu}_n^{\ML}[\varphi]-\bar{\mu}_n^{N_L}[\varphi]}_p &\leq \sum_{\ell=0}^{L}
  \norm{\sum_{m=1}^{M_\ell}\frac{ (\bar{\mu}_{n}^{\ell,\ff,m }- \bar{\mu}_n^{\ell,\fc,m})[\varphi]
        -\Ex{ (\bar \mu_n^{\ell,\ff}- \bar \mu_n^{\ell,\fc})[\varphi] }}{M_\ell} }_p \\
&\lesssim \sum_{\ell=0}^{L}M_{\ell}^{-1/2} \norm{E_{P_\ell}[\varphi(\vBarHat^{\ell,\ff}_n) - \varphi(\vBarHat^{\ell,\fc}_n) -\Ex{\varphi(\vBarHat^{\ell,\ff}_n) - \varphi(\vBarHat^{\ell,\fc}_n)} }_p\\
& \lesssim \sum_{\ell=0}^{L} M_{\ell}^{-1/2} P_{\ell}^{-1/2} \norm{\varphi(\vBarHat_{n}^{\ell,\ff})-\varphi(\vBarHat_{n}^{\ell,\fc})}_p\\
&\lesssim \sum_{\ell=0}^{L} M_{\ell}^{-1/2} P_{\ell}^{-1/2}N_\ell^{-\beta/2}.
\end{split}
\end{align*}

For the third term in~\eqref{3term}, it follows by~\eqref{eq:ass1iiConsequence} that  
\begin{equation*}
  \begin{split}
    \norm{\bar{\mu}_n^{N_L}[\varphi]-\bar{\mu}_n[\varphi]}_p
    = |\Ex{\varphi(\vBarHat_{n}^{L,\ff})-\varphi(\vBarHat_{n})}| \lesssim N_L^{-\alpha}.
  \end{split}
\end{equation*}
\end{proof}	

\begin{lemma}[Continuity of mean-field Kalman gains]\label{lemma:MFGainCont}
It holds that 
\begin{equation*}
\begin{split}
|\bar{K}_{n}^{\ell, \fc}-\bar{K}_n^{\ell, \ff}|_2&\leq |\Gamma^{-1}|_2|H|_2\prt{1+2|\bar{K}_n^{\ell, \ff}H|_2}|\bar{C}_{n}^{\ell, \fc}-\bar{C}_n^{\ell, \ff}|_2\lesssim |\bar{C}_{n}^{\ell,  \fc}-\bar{C}_n^{\ell, \ff}|_2.
\end{split}
\end{equation*}
\end{lemma}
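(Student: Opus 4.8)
The plan is to mirror the deterministic algebraic argument behind Lemma~\ref{lem:EnKFGainCont}, which in turn rests on \cite[Lemma 3.4]{hoel2016}, but now applied to the two \emph{deterministic} mean-field gains $\bar{K}_n^{\ell,\ff} = \bar{K}_n^{N_\ell}$ and $\bar{K}_n^{\ell,\fc} = \bar{K}_n^{N_{\ell-1}}$, which differ only through their underlying prediction covariances $\bar{C}_n^{\ell,\ff}$ and $\bar{C}_n^{\ell,\fc}$. Since no $L^p(\Omega)$-norms are involved here---both gains are deterministic matrices---the entire computation takes place in the matrix operator norm $|\cdot|_2$, which makes this lemma a purely deterministic counterpart of Lemma~\ref{lem:EnKFGainCont}.

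First I would write each gain in the defining form $\bar{K}_n^{\ell,\cdot} = \bar{C}_n^{\ell,\cdot} H^\bT (H\bar{C}_n^{\ell,\cdot}H^\bT + \Gamma)^{-1}$ and, abbreviating $S^{\cdot} := H\bar{C}_n^{\ell,\cdot}H^\bT + \Gamma$, add and subtract $\bar{C}_n^{\ell,\ff}H^\bT (S^{\fc})^{-1}$ to reach the telescoping identity
\[
\bar{K}_n^{\ell,\fc} - \bar{K}_n^{\ell,\ff}
= \bar{K}_n^{\ell,\ff} H\prt{\bar{C}_n^{\ell,\ff} - \bar{C}_n^{\ell,\fc}} H^\bT (S^{\fc})^{-1}
+ \prt{\bar{C}_n^{\ell,\fc} - \bar{C}_n^{\ell,\ff}} H^\bT (S^{\fc})^{-1},
\]
exactly in the shape of the expression for $\bar{K}_k - \bar{K}_k^N$ used in the proof of Theorem~\ref{thm:enkfConv}. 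The single algebraic input is the resolvent identity $(S^{\ff})^{-1} - (S^{\fc})^{-1} = (S^{\ff})^{-1} H\prt{\bar{C}_n^{\ell,\fc} - \bar{C}_n^{\ell,\ff}}H^\bT (S^{\fc})^{-1}$, combined with $\bar{K}_n^{\ell,\ff} = \bar{C}_n^{\ell,\ff}H^\bT (S^{\ff})^{-1}$, which turns the first summand into $\bar{K}_n^{\ell,\ff}H(\cdots)$.

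Next I would take $|\cdot|_2$-norms, apply submultiplicativity, and use that $\Gamma$ is positive definite while $H\bar{C}_n^{\ell,\fc}H^\bT$ is positive semidefinite, so that $|(S^{\fc})^{-1}|_2 \le |\Gamma^{-1}|_2$ and hence $|H^\bT (S^{\fc})^{-1}|_2 \le |H|_2 |\Gamma^{-1}|_2$. Grouping the two terms yields exactly the stated first inequality with prefactor $|\Gamma^{-1}|_2 |H|_2 \prt{1 + 2|\bar{K}_n^{\ell,\ff}H|_2}$. For the concluding $\lesssim$ inequality I would invoke that the mean-field prediction particles satisfy $\bar{v}_n^{N} \in \cap_{p\ge 2} L^p(\Omega,\bR^d)$ \emph{uniformly} in $N\in\bN\cup\{\infty\}$, cf.~\eqref{eq:inductionVBarBound}; this bounds $|\bar{C}_n^{\ell,\ff}|_2$, and therefore $|\bar{K}_n^{\ell,\ff}H|_2$, by a constant independent of $\ell$, so the prefactor may be absorbed into the hidden constant.

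I do not expect a serious obstacle: the content is a routine matrix manipulation identical to \cite[Lemma 3.4]{hoel2016}, and the only point requiring genuine care is verifying that the prefactor is uniform in the level $\ell$ (equivalently, in the resolution $N_\ell$), which is precisely what the uniform-in-$N$ moment bound \eqref{eq:inductionVBarBound} supplies.
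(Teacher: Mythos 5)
Your proposal is correct and follows essentially the same route as the paper, whose proof simply states that the argument is analogous to \cite[Lemma~3.4]{hoel2016}: the telescoping/resolvent identity you write out is exactly the identity the paper itself displays (for $\bar{K}_k-\bar{K}_k^N$) in the proof of Theorem~\ref{thm:enkfConv}, and your justification of the final $\lesssim$ via the uniform-in-$N$ moment bound~\eqref{eq:inductionVBarBound} matches the paper's reasoning in Lemma~\ref{lem:EnKFGainCont}. (Your derivation in fact yields the slightly sharper prefactor $1+|\bar{K}_n^{\ell,\ff}H|_2$, which immediately implies the stated bound with $1+2|\bar{K}_n^{\ell,\ff}H|_2$.)
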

\begin{proof}
The proof is analogous to~\cite[Lemma~3.4]{hoel2016}. 
\end{proof}

\begin{lemma}[Continuity of mean-field covariance matrices] \label{lemma:MFcovErr}
If Assumptions~\ref{ass:psi} and~\ref{ass:psi2} hold,
then for any triplet of sequences 
$\{M_\ell\}, \{N_\ell\}, \{P_\ell\} \subset \bN$
described in Section~\ref{sec:mlenkfIntro}, $L\ge 0$ and $n\ge 1$
it holds that
\begin{equation*}
    |\bar{C}_n^{\ell,\fc}-\bar{C}_n^{\ell,\ff}|_2 \lesssim \norm{\bar{v}_n^{\ell,\fc}-\bar{v}_n^{\ell,\ff}}_{4}.
\end{equation*}
\end{lemma}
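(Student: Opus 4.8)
The plan is to treat this as the deterministic (mean-field) analogue of Lemma~\ref{lem:EnKFCovErr}: because $\bar{C}_n^{\ell,\ff} = \mathrm{Cov}[\bar{v}_n^{\ell,\ff}]$ and $\bar{C}_n^{\ell,\fc} = \mathrm{Cov}[\bar{v}_n^{\ell,\fc}]$ are \emph{deterministic} matrices, there is no statistical $P^{-1/2}$ contribution and only the distance between the two shadowing prediction particles survives. This is in fact the very computation already carried out within the proof of Theorem~\ref{thm:enkfConv} to show $|\bar{C}_k^N-\bar{C}_k|_2 \lesssim N^{-\alpha}$, with the only difference that here I keep the norm of the particle difference instead of substituting a rate. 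Concretely, I would start from
\[
\bar{C}_n^{\ell,\fc}-\bar{C}_n^{\ell,\ff} = \Ex{\bar{v}_n^{\ell,\fc}(\bar{v}_n^{\ell,\fc})^\bT - \bar{v}_n^{\ell,\ff}(\bar{v}_n^{\ell,\ff})^\bT} - \prt{\Ex{\bar{v}_n^{\ell,\fc}}\Ex{\bar{v}_n^{\ell,\fc}}^\bT - \Ex{\bar{v}_n^{\ell,\ff}}\Ex{\bar{v}_n^{\ell,\ff}}^\bT}
\]
and telescope each outer product through the identity $ab^\bT - cd^\bT = a(b-d)^\bT + (a-c)d^\bT$, applied once to the second-moment part (with $a = \bar{v}_n^{\ell,\fc}$, $d = \bar{v}_n^{\ell,\ff}$) and once to the mean part.

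Next I would pass to norms. Using the equivalence of the operator norm $|\cdot|_2$ and the Frobenius norm in the fixed finite dimension $d$, each resulting term is a deterministic matrix whose entries are expectations of scalar products; pulling the expectation outside by Jensen's inequality and applying Cauchy--Schwarz (H\"older with conjugate exponents $2,2$) entrywise yields, for instance,
\[
\abs{\Ex{\bar{v}_n^{\ell,\fc}(\bar{v}_n^{\ell,\fc} - \bar{v}_n^{\ell,\ff})^\bT}}_2 \lesssim \norm{\bar{v}_n^{\ell,\fc}}_2 \norm{\bar{v}_n^{\ell,\fc}-\bar{v}_n^{\ell,\ff}}_2,
\]
and analogously for the remaining three terms, so that
\[
\abs{\bar{C}_n^{\ell,\fc}-\bar{C}_n^{\ell,\ff}}_2 \lesssim \prt{\norm{\bar{v}_n^{\ell,\ff}}_2 + \norm{\bar{v}_n^{\ell,\fc}}_2}\norm{\bar{v}_n^{\ell,\fc}-\bar{v}_n^{\ell,\ff}}_2.
\]

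The decisive input is the uniform-in-$(\ell,N_\ell)$ boundedness of the second moments. Since the mean-field dynamics use deterministic gains, $\bar{v}_n^{\ell,\ff} \stackrel{D}{=} \bar{v}_n^{N_\ell}$ and $\bar{v}_n^{\ell,\fc} \stackrel{D}{=} \bar{v}_n^{N_{\ell-1}}$, so the induction~\eqref{eq:inductionVBarBound} (which rests on Assumption~\ref{ass:psi}(i)) gives $\bar{v}_n^{\ell,\ff}, \bar{v}_n^{\ell,\fc} \in \cap_{p\ge2} L^p(\Omega,\bR^d)$ with norms bounded uniformly over $N\in\bN\cup\{\infty\}$, hence $\norm{\bar{v}_n^{\ell,\ff}}_2, \norm{\bar{v}_n^{\ell,\fc}}_2 \lesssim 1$ with a constant independent of $\ell$. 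Absorbing this factor and finally invoking Jensen's inequality $\norm{\cdot}_2 \le \norm{\cdot}_4$ on the probability space gives the claim
\[
\abs{\bar{C}_n^{\ell,\fc}-\bar{C}_n^{\ell,\ff}}_2 \lesssim \norm{\bar{v}_n^{\ell,\fc}-\bar{v}_n^{\ell,\ff}}_2 \le \norm{\bar{v}_n^{\ell,\fc}-\bar{v}_n^{\ell,\ff}}_4.
\]

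The algebra is routine; the one point I would watch carefully — and the only genuine obstacle — is making sure the hidden constant stays independent of $\ell$ and of the resolutions $N_\ell, N_{\ell-1}$, since a level-dependent blow-up here would be fatal when this estimate is later summed over $\ell$. This is exactly what the \emph{uniform} moment bound from~\eqref{eq:inductionVBarBound} secures. I note in passing that the $\norm{\cdot}_4$ on the right-hand side is stated only for convenience in chaining with the downstream estimates (e.g.\ Lemma~\ref{lemma:distMFens}); the sharper natural bound is already in $\norm{\cdot}_2$.
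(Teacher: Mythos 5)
Your proposal is correct and follows essentially the same route as the paper's proof: the same split of $\bar{C}_n^{\ell,\fc}-\bar{C}_n^{\ell,\ff}$ into the second-moment and mean parts, bounded via H\"older/Cauchy--Schwarz and Jensen together with the uniform-in-$(\ell,N)$ moment bounds from~\eqref{eq:inductionVBarBound}; your extra observations (the explicit telescoping identity, the $\ell$-independence of the hidden constant, and the remark that the bound already holds with $\norm{\cdot}_2$ so that $\norm{\cdot}_4$ is only for convenient chaining with Lemma~\ref{lemma:distMFens}) are accurate refinements rather than a different argument.
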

\begin{proof}
Using that $\bar{v}_n^{\ell,\fc},\bar{v}_n^{\ell,\ff} \in L^{4}(\Omega, \bR^d)$, H{\"o}lder's and Jensen's inequalities yield that
\begin{equation*}
  \begin{split}
    \abs{\bar{C}_n^{\ell,\fc}-\bar{C}_n^{\ell,\ff}}_2 &\leq
    \abs{\Ex{\bar{v}_n^{\ell,\fc}(\bar{v}_n^{\ell,\fc})^{\bT}-\bar{v}_n^{\ell,\ff}(\bar{v}_n^{\ell,\ff})^{\bT}}}_2
    + \abs{\Ex{\bar{v}_n^{\ell,\fc}}\Ex{(\bar{v}_n^{\ell,\fc})^{\bT}}
      -\Ex{\bar{v}_n^{\ell,\ff}}\Ex{(\bar{v}_n^{\ell,\ff})^{\bT}}}_2\\
		&\lesssim \norm{\bar{v}_n^{\ell,\fc}-\bar{v}_n^{\ell,\ff}}_{4}.
		\end{split}
		\end{equation*}
\end{proof}

In the remaining part of this section, we will assume that 
Assumptions~\ref{ass:psi} and~\ref{ass:psi2} hold,
and that the sequences $\{N_\ell\}, \{P_\ell\} \subset \bN$
satisfy the constraints given in Section~\ref{sec:mlenkfIntro}
(namely, $P_\ell = 2P_{\ell-1}$ and $\{N_\ell\}$
exponentially increasing).

\begin{lemma}[Stability of mean-field particles]\label{lemma:distMFens}
For any $n\ge 0$ and $p\ge 2$, it holds that 
\begin{equation*}
  \begin{split}
    \max \prt{\norm{\vBarHat_n^{\ell,\ff}-\vBarHat_n^{\ell,\fc}},\norm{\bar{v}_{n+1}^{\ell,\ff}-\bar{v}_{n+1}^{\ell,\fc}}_p}\lesssim N_\ell^{-\beta/2},
  \end{split}
\end{equation*}
for multilevel mean-field prediction and update particles defined as in~\eqref{eq:mfmlenkfDynamics} and ~\eqref{eq:mfmlenkfUpdates}.
\end{lemma}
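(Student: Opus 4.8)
The plan is to prove both bounds simultaneously by induction on $n$, taking the update-particle estimate $\norm{\vBarHat_n^{\ell,\ff}-\vBarHat_n^{\ell,\fc}}_p \lesssim N_\ell^{-\beta/2}$ (for every $p\ge2$) as the induction hypothesis and deriving the prediction-particle estimate at the next time as an intermediate step. The base case $n=0$ is immediate: the fine and coarse mean-field ensembles share the same initial condition, so $\vBarHat_0^{\ell,\ff}=\vBarHat_0^{\ell,\fc}$ and the difference vanishes. Throughout I will use that the mean-field particles $\vBarHat_n^{\ell,\ff},\vBarHat_n^{\ell,\fc},\bar v_n^{\ell,\ff},\bar v_n^{\ell,\fc}$ lie in $\cap_{p\ge2}L^p(\Omega,\bR^d)$ uniformly in $\ell$ and $n$, which follows from Assumption~\ref{ass:psi}(i) together with the boundedness of the mean-field Kalman gains, by the same induction as in~\eqref{eq:inductionVBarBound}.

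For the prediction step at time $n+1$, I would split the difference coming from~\eqref{eq:mfmlenkfDynamics} as
\[
\bar v_{n+1}^{\ell,\ff}-\bar v_{n+1}^{\ell,\fc} = \prt{\Psi_n^{N_\ell}(\vBarHat_n^{\ell,\ff})-\Psi_n^{N_\ell}(\vBarHat_n^{\ell,\fc})} + \prt{\Psi_n^{N_\ell}(\vBarHat_n^{\ell,\fc})-\Psi_n^{N_{\ell-1}}(\vBarHat_n^{\ell,\fc})}.
\]
The first bracket is bounded by the Lipschitz estimate of Assumption~\ref{ass:psi}(iii) and the induction hypothesis, giving $\lesssim \norm{\vBarHat_n^{\ell,\ff}-\vBarHat_n^{\ell,\fc}}_p \lesssim N_\ell^{-\beta/2}$. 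The second bracket is a difference of two consecutive-resolution solvers applied to the \emph{same} input, so Assumption~\ref{ass:psi2}(iii) with $|\kappa|_1=0$ and the index shifted by one ($\ell\ge1$) yields $\lesssim (1+\norm{\vBarHat_n^{\ell,\fc}}_p)\,N_{\ell-1}^{-\beta/2}$; the uniform $L^p$ bound and $N_{\ell-1}^{-\beta/2}\eqsim N_\ell^{-\beta/2}$ then close this term. Hence $\norm{\bar v_{n+1}^{\ell,\ff}-\bar v_{n+1}^{\ell,\fc}}_p \lesssim N_\ell^{-\beta/2}$.

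For the update step I would rearrange~\eqref{eq:mfmlenkfUpdates}, exploiting that the two updates share the same perturbed observation $\tilde y_{n+1}^\ell$, into
\[
\vBarHat_{n+1}^{\ell,\ff}-\vBarHat_{n+1}^{\ell,\fc} = (I-\bar K_{n+1}^{\ell,\ff}H)\prt{\bar v_{n+1}^{\ell,\ff}-\bar v_{n+1}^{\ell,\fc}} + (\bar K_{n+1}^{\ell,\ff}-\bar K_{n+1}^{\ell,\fc})\prt{\tilde y_{n+1}^\ell - H\bar v_{n+1}^{\ell,\fc}}.
\]
The first term is controlled by the bounded operator norm $|I-\bar K_{n+1}^{\ell,\ff}H|_2$ times the prediction bound just obtained, hence $\lesssim N_\ell^{-\beta/2}$. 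For the second term, since the mean-field gains are deterministic, H\"older's inequality gives $|\bar K_{n+1}^{\ell,\ff}-\bar K_{n+1}^{\ell,\fc}|_2\,\norm{\tilde y_{n+1}^\ell-H\bar v_{n+1}^{\ell,\fc}}_p$, where the second factor is uniformly bounded, while Lemmas~\ref{lemma:MFGainCont} and~\ref{lemma:MFcovErr} bound the gain difference by $\lesssim \norm{\bar v_{n+1}^{\ell,\fc}-\bar v_{n+1}^{\ell,\ff}}_4 \lesssim N_\ell^{-\beta/2}$. Combining closes the induction at time $n+1$ for every $p\ge2$.

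The main obstacle I anticipate is the apparent circularity in the update step: the gain difference feeding the update bound depends, through Lemmas~\ref{lemma:MFGainCont}--\ref{lemma:MFcovErr}, on a prediction-particle difference. This is resolved precisely by the ordering above, establishing the time-$(n+1)$ prediction bound \emph{before} invoking the gain estimate, so that the gain difference is dominated by an already-proven quantity rather than by the very update difference one seeks to bound. The remaining work is bookkeeping: verifying the uniform-in-$(\ell,n)$ $L^p$ bounds on the mean-field particles for every auxiliary norm that appears (the $(1+\norm{u}_p)$ factor and $\norm{\tilde y_{n+1}^\ell-H\bar v_{n+1}^{\ell,\fc}}_p$), and checking that the one-level resolution gap $N_{\ell-1}\eqsim N_\ell$ does not degrade the target rate $N_\ell^{-\beta/2}$.
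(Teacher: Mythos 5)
Your proposal is correct and follows essentially the same route as the paper's proof: induction on $n$ with the update-particle bound as hypothesis, an add-and-subtract splitting of the prediction difference into a Lipschitz term (Assumption~\ref{ass:psi}(iii)) plus a consecutive-resolution term (Assumption~\ref{ass:psi2}(iii)), and then the update rearrangement bounded via Lemmas~\ref{lemma:MFGainCont} and~\ref{lemma:MFcovErr}, with the prediction bound established first so the gain difference is controlled by an already-proven quantity. Your only deviations are immaterial symmetric choices (which solver carries the Lipschitz estimate, and whether $(I-\bar{K}^{\ell,\ff}_{n}H)$ or $(I-\bar{K}^{\ell,\fc}_{n}H)$ is factored out), plus an explicit treatment of the $N_{\ell-1}\eqsim N_\ell$ index shift that the paper leaves implicit.
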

\begin{proof}
Since $\vBarHat_0^{\ell,\ff} = \vBarHat_0^{\ell,\fc}$, we may assume that for some $n\ge1$,
\begin{equation*}
   \norm{\vBarHat_{n-1}^{\ell,\ff}-\vBarHat_{n-1}^{\ell,\fc}}_p \lesssim N_\ell^{-\beta/2}.
\end{equation*}
Assumption~\ref{ass:psi}(iii) and Assumption~\ref{ass:psi2}(iii) imply that 
\begin{equation*}
  \begin{split}
    \norm{\bar{v}_n^{\ell,\ff}-\bar{v}_n^{\ell,\fc}}_p &\leq \norm{\Psi^{N_\ell}_{n-1}(\vBarHat_{n-1}^{\ell,\ff})-\Psi^{N_{\ell-1}}_{n-1}(\vBarHat_{n-1}^{\ell,\ff})}_p+\norm{\Psi^{N_{\ell-1}}_{n-1}(\vBarHat_{n-1}^{\ell,\ff})-\Psi^{N_{\ell-1}}_{n-1}(\vBarHat_{n-1}^{\ell,\fc})}_p\\
    &\lesssim N_\ell^{-\beta/2},
    \end{split}
\end{equation*}
and Lemmas~\ref{lemma:MFGainCont} and~\ref{lemma:MFcovErr} and 
$|\bar{v}_n^{\ell,\ff}|, |\tilde{y}_n^\ell| \in \cap_{r\ge2} L^r(\Omega)$ yield that
\begin{equation*}
\begin{split}
   \norm{\vBarHat_{n}^{\ell,\ff}-\vBarHat_{n}^{\ell,\fc}}_p&\leq \abs{I-\bar{K}_n^{\ell,\fc}H}_{2}\norm{\bar{v}_n^{\ell,\ff}-\bar{v}_n^{\ell,\fc}}_{p}+\abs{\bar{K}_n^{\ell,\fc}-\bar{K}_n^{\ell,\ff}}_{2}\norm{H\bar{v}_n^{\ell,\ff}+\tilde{y}_n^\ell}_{p}\lesssim N_\ell^{-\beta/2}.
    \end{split}
\end{equation*}
The statement holds by induction.
\end{proof}

\begin{corollary}[Continuity of mean-field and EnKF covariance matrices] \label{covErr}
For any $n\ge 0$ and $p\ge 2$, it holds that
\begin{equation*}
 \begin{split}
    \norm{C_n^{\ell,\ff}-\bar{C}_n^{\ell,\ff}}_p &\lesssim \norm{v_n^{\ell,\ff}-\bar{v}_n^{\ell,\ff}}_{2p}+P_\ell^{-1/2},\\
        \norm{C_n^{\ell,\fc_k}-\bar{C}_n^{\ell,\fc}}_p &\lesssim \norm{v_n^{\ell,\fc}-\bar{v}_n^{\ell,\fc}}_{2p}+P_\ell^{-1/2}, \quad k=1,2,
     \end{split}
\end{equation*}
for multilevel prediction particles defined as in~\eqref{ml:prediction} and ~\eqref{eq:mfmlenkfDynamics}.
\end{corollary}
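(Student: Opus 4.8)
The plan is to read Corollary~\ref{covErr} as nothing more than Lemma~\ref{lem:EnKFCovErr} applied to each of the three EnKF ensembles that constitute one MLEnKF sample. The key observation is that, taken individually, $\vHat_{n,1:P_\ell}^{\ell,\ff}$ is a standard EnKF ensemble with $P_\ell$ particles at resolution $N_\ell$, while each coarse sub-ensemble $\vHat_{n,1:P_{\ell-1}}^{\ell,\fc_k}$ ($k=1,2$) is a standard EnKF ensemble with $P_{\ell-1}$ particles at resolution $N_{\ell-1}$. Their biased sample covariances $C_n^{\ell,\ff}$ and $C_n^{\ell,\fc_k}$ are exactly of the form~\eqref{enkf:biasedCov}, and the associated mean-field covariances are $\bar{C}_n^{\ell,\ff}=\bar{C}_n^{N_\ell}$ and $\bar{C}_n^{\ell,\fc}=\bar{C}_n^{N_{\ell-1}}$. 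Moreover, the auxiliary mean-field ensembles $\vBarHat^{\ell,\ff}_{n,1:P_\ell}$ and $\vBarHat^{\ell,\fc_k}_{n,1:P_{\ell-1}}$ introduced in~\eqref{eq:mfmlenkfDynamics}--\eqref{eq:mfmlenkfUpdates} shadow the respective EnKF ensembles in precisely the sense of the auxiliary ensemble~\eqref{eq:auxMFEnKF} used inside the proof of Lemma~\ref{lem:EnKFCovErr}: they share the same initial data, driving noise, and perturbed observations while being updated with the deterministic mean-field gain. Hence that lemma applies verbatim to each level-$\ell$ component.

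For the first inequality I would invoke Lemma~\ref{lem:EnKFCovErr} with $(N,P)=(N_\ell,P_\ell)$ applied to the fine ensemble, which returns $\norm{C_n^{\ell,\ff}-\bar{C}_n^{\ell,\ff}}_p \lesssim \norm{v_n^{\ell,\ff}-\bar{v}_n^{\ell,\ff}}_{2p}+P_\ell^{-1/2}$ directly. For the second inequality I would invoke the lemma with $(N,P)=(N_{\ell-1},P_{\ell-1})$ applied separately to each sub-ensemble, obtaining $\norm{C_n^{\ell,\fc_k}-\bar{C}_n^{\ell,\fc}}_p \lesssim \norm{v_n^{\ell,\fc_k}-\bar{v}_n^{\ell,\fc}}_{2p}+P_{\ell-1}^{-1/2}$ for $k=1,2$. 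Two cosmetic rewrites then yield the stated form: since $P_\ell = 2P_{\ell-1}$ we have $P_{\ell-1}^{-1/2}=\sqrt{2}\,P_\ell^{-1/2}\eqsim P_\ell^{-1/2}$, which is absorbed into the hidden constant; and since the two coarse sub-ensembles (the $k=1$ and $k=2$ families) are identically distributed, the $L^{2p}$-norm $\norm{v_n^{\ell,\fc_k}-\bar{v}_n^{\ell,\fc}}_{2p}$ does not depend on $k$ and may be written with the generic superindex $\fc$.

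I expect no genuine obstacle here: the substantive analytic work — the triangle-inequality splitting into a sample-covariance-versus-shadow term and a Marcinkiewicz--Zygmund statistical term, together with the $L^p$-boundedness of the mean-field particles — was already carried out in Lemma~\ref{lem:EnKFCovErr}. The only point requiring a moment's care is confirming that the shadowing construction of~\eqref{eq:mfmlenkfDynamics}--\eqref{eq:mfmlenkfUpdates} is identical, component by component, to the single-level auxiliary ensemble~\eqref{eq:auxMFEnKF}, so that the lemma transfers without modification rather than merely by analogy.
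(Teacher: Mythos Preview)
Your proposal is correct and matches the paper's own proof almost verbatim: the paper simply notes that $C_n^{\ell,\ff}=C_n^{N_\ell,P_\ell}$, $\bar C_n^{\ell,\ff}=\bar C_n^{N_\ell}$, $C_n^{\ell,\fc_k}\stackrel{D}{=}C_n^{N_{\ell-1},P_{\ell-1}}$, $\bar C_n^{\ell,\fc}=\bar C_n^{N_{\ell-1}}$, and invokes Lemma~\ref{lem:EnKFCovErr}. Your additional remarks on the $P_{\ell-1}^{-1/2}\eqsim P_\ell^{-1/2}$ absorption and on the identical distribution of the two coarse sub-ensembles are exactly the cosmetic points implicitly used there.
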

\begin{proof}
  Since $C_n^{\ell,\ff} = C_n^{N_\ell,P_\ell}$, $\bar{C}_n^{\ell,\ff} = \bar{C}_n^{N_\ell}$, $C_n^{\ell,\fc_k} \stackrel{D}{=} C_n^{N_{\ell-1},P_{\ell-1}}$ and $\bar{C}_n^{\ell,\fc} = \bar{C}_n^{N_{\ell-1}}$,
  the result follows from Lemma~\ref{lem:EnKFCovErr}. 
\end{proof}

\begin{corollary}[Distance between ensembles II] \label{distEns}
For any $n\ge0$ and $p\ge 2$, the following asymptotic inequality holds
  \begin{equation*}
\max \prt{\norm{\vHat_{n}^{\ell,\ff}-\vBarHat_{n}^{\ell,\ff}}_p,\norm{\vHat_{n}^{\ell,\fc}-\vBarHat_{n}^{\ell,\fc}}_p} \lesssim P_\ell^{-1/2}.
\end{equation*}
for multilevel update particles defined as in~\eqref{ml:update} and ~\eqref{eq:mfmlenkfUpdates}.
\end{corollary}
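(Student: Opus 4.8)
The plan is to reduce the statement entirely to Lemma~\ref{lemma:EnKFdistEns}, exploiting that each of the three EnKF ensembles appearing in one MLEnKF sample is, marginally and together with its mean-field shadow, an exact copy of the EnKF/MFEnKF pair analyzed there. Concretely, the fine-level ensemble satisfies $\mu_n^{\ell,\ff} = \mu_n^{N_\ell,P_\ell}$ by definition, its dynamics~\eqref{ml:prediction}--\eqref{ml:update} are driven by the resolution-$N_\ell$ solver together with its own driving noise and perturbed observations, and the auxiliary ensemble $\vBarHat_n^{\ell,\ff}$ of~\eqref{eq:mfmlenkfDynamics}--\eqref{eq:mfmlenkfUpdates} uses the mean-field gain $\bar K_n^{\ell,\ff} = \bar K_n^{N_\ell}$ while sharing the initial data, driving noise, and perturbed observations with $\vHat_n^{\ell,\ff}$. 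This is precisely the shadowing construction~\eqref{eq:auxMFEnKF} with $(N,P) = (N_\ell,P_\ell)$, so the coupled pair $(\vHat_n^{\ell,\ff}, \vBarHat_n^{\ell,\ff})$ has the same joint law as $(\vHat_n^{N_\ell,P_\ell}, \vBarHat_n^{N_\ell})$.

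First I would treat the fine-level bound: applying Lemma~\ref{lemma:EnKFdistEns} with $(N,P) = (N_\ell,P_\ell)$ gives immediately $\norm{\vHat_n^{\ell,\ff} - \vBarHat_n^{\ell,\ff}}_p \lesssim P_\ell^{-1/2}$ for every $n \in \bN_0$ and $p \ge 2$. Next I would treat the coarse level, where the ensemble decomposes as $\vHat_{n,1:P_\ell}^{\ell,\fc} = \vHat_{n,1:P_{\ell-1}}^{\ell,\fc_1} \cup \vHat_{n,1:P_{\ell-1}}^{\ell,\fc_2}$ and each $\vHat_n^{\ell,\fc_k}$ is, by construction, an EnKF ensemble at resolution $N_{\ell-1}$ with $P_{\ell-1}$ particles, coupled to its shadow $\vBarHat_n^{\ell,\fc_k}$ (which uses $\bar K_n^{\ell,\fc} = \bar K_n^{N_{\ell-1}}$) exactly as~\eqref{eq:auxMFEnKF} prescribes with $(N,P) = (N_{\ell-1},P_{\ell-1})$. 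Hence Lemma~\ref{lemma:EnKFdistEns} applies to each subensemble and yields $\norm{\vHat_n^{\ell,\fc_k} - \vBarHat_n^{\ell,\fc_k}}_p \lesssim P_{\ell-1}^{-1/2}$ for $k=1,2$. Since a representative coarse particle $\vHat_n^{\ell,\fc}$ and its shadow $\vBarHat_n^{\ell,\fc}$ coincide with one of these pairs, and since $P_{\ell-1} = P_\ell/2$ gives $P_{\ell-1}^{-1/2} \eqsim P_\ell^{-1/2}$, the coarse bound $\norm{\vHat_n^{\ell,\fc} - \vBarHat_n^{\ell,\fc}}_p \lesssim P_\ell^{-1/2}$ follows, and taking the maximum completes the argument.

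The only point requiring care — and where I expect the main, though modest, obstacle to lie — is verifying that the additional cross-level coupling of the MLEnKF construction does not disturb these marginal identifications. The fine and coarse particles share their driving noise in~\eqref{ml:prediction} and their perturbed observations in~\eqref{ml:update}, but this joint coupling is irrelevant to the bounds above, each of which compares a single ensemble only with its own shadow; the marginal law of $(\vHat_n^{\ell,\ff}, \vBarHat_n^{\ell,\ff})$, respectively of $(\vHat_n^{\ell,\fc_k}, \vBarHat_n^{\ell,\fc_k})$, is determined solely by the resolution, the ensemble size, and the randomness internal to that ensemble, all of which match the EnKF/MFEnKF setting of Lemma~\ref{lemma:EnKFdistEns}. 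Once this is observed, no re-derivation is needed; should one prefer a self-contained argument, it would simply repeat the induction proving Lemma~\ref{lemma:EnKFdistEns} verbatim, invoking Assumption~\ref{ass:psi}(iii), Lemma~\ref{lem:EnKFGainCont}, and Corollary~\ref{covErr} in place of Lemma~\ref{lem:EnKFCovErr}.
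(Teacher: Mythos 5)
Your proposal is correct and follows essentially the same route as the paper: the paper's proof likewise observes that $\vHat_{n}^{\ell,\ff}-\vBarHat_{n}^{\ell,\ff}$ and $\vHat_{n}^{\ell,\fc}-\vBarHat_{n}^{\ell,\fc}$ are equal in distribution to the corresponding EnKF/MFEnKF differences at resolutions $N_\ell$ and $N_{\ell-1}$, and then invokes Lemma~\ref{lemma:EnKFdistEns}. Your additional remarks—that the cross-level coupling does not affect these marginal laws, and that $P_{\ell-1}^{-1/2}\eqsim P_\ell^{-1/2}$—make explicit two points the paper leaves implicit, but the argument is the same.
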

\begin{proof}
  Since $\vHat_{n}^{\ell,\ff} -\vBarHat_{n}^{\ell,\ff} \stackrel{D}{=} \vHat_{n}^{N_\ell,P_\ell}- \vBarHat_{n}^{N_\ell}$ and $\vHat_{n}^{\ell,\fc} -\vBarHat_{n}^{\ell,\fc} \stackrel{D}{=} \vHat_{n}^{N_{\ell-1},P_\ell}- \vBarHat_{n}^{N_{\ell-1}}$,
  the result follows from Lemma~\ref{lemma:EnKFdistEns}.
\end{proof}

\begin{lemma}[Continuity of Kalman gain double differences]\label{DoubleGainCont}
For any $n\ge 0$ and $p\ge 2$, it holds that
  \begin{equation}\label{dbKG}
    \begin{split}
      \norm{K_n^{\ell, \ff}-\frac{K_{n}^{\ell,\fc_1}+K_{n}^{\ell,\fc_2}}{2}-(\bar{K}_n^{\ell,\ff}-\bar{K}_n^{\ell,\fc})}_p&\lesssim \norm{C_n^{\ell,\ff}-\frac{C_{n}^{\ell,\fc_1}+C_{n}^{\ell,\fc_2}}{2}-(\bar{C}_n^{\ell,\ff}-\bar{C}_n^{\ell,\fc})}_p\\
	&\quad +P_{\ell}^{-1/2}N_{\ell}^{-\beta/2}+P_{\ell}^{-1}.
    \end{split}
  \end{equation}
\end{lemma}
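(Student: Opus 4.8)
The plan is to reduce the gain double difference to the covariance double difference by exploiting the exact identity for the Kalman-gain map $\mathcal{K}(C) := CH^\bT(HCH^\bT+\Gamma)^{-1}$, namely
\[
\mathcal{K}(C_1) - \mathcal{K}(C_2) = \prt{I - \mathcal{K}(C_1)H}(C_1 - C_2)H^\bT\prt{HC_2H^\bT + \Gamma}^{-1},
\]
which follows from a short computation (cf.\ the gain-difference formula from~\cite[Lemma~3.4]{hoel2016} already used in the proof of Theorem~\ref{thm:enkfConv}). First I would rewrite the left-hand side of~\eqref{dbKG} as a sum of three single differences,
\[
K_n^{\ell,\ff} - \tfrac12\prt{K_{n}^{\ell,\fc_1}+K_{n}^{\ell,\fc_2}} - \prt{\bar K_n^{\ell,\ff}-\bar K_n^{\ell,\fc}} = \prt{K_n^{\ell,\ff}-\bar K_n^{\ell,\ff}} - \tfrac12\sum_{k=1}^2\prt{K_n^{\ell,\fc_k}-\bar K_n^{\ell,\fc}},
\]
and apply the identity to each term. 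Writing $\bar S^{\ff} := H\bar C_n^{\ell,\ff}H^\bT+\Gamma$ and $\bar S^{\fc} := H\bar C_n^{\ell,\fc}H^\bT+\Gamma$, the fine difference becomes $(I-K_n^{\ell,\ff}H)(C_n^{\ell,\ff}-\bar C_n^{\ell,\ff})H^\bT(\bar S^{\ff})^{-1}$ and each coarse difference becomes $(I-K_n^{\ell,\fc_k}H)(C_n^{\ell,\fc_k}-\bar C_n^{\ell,\fc})H^\bT(\bar S^{\fc})^{-1}$.

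Next I would force a common, deterministic prefactor $A := I-\bar K_n^{\ell,\ff}H$ and postfactor $B := H^\bT(\bar S^{\ff})^{-1}$ onto all three terms, both of which have operator norm bounded uniformly in $N_\ell,\ell$ (since $\bar K_n^{\ell,\ff}$ is bounded via~\eqref{eq:inductionVBarBound} and $|(\bar S^{\ff})^{-1}|_2 \lesssim |\Gamma^{-1}|_2$ by positive-definiteness of $\Gamma$). Concretely, I expand each prefactor as $I-KH = A-(K-\bar K_n^{\ell,\ff})H$ for the relevant gain $K$, inserting the further split $\bar K_n^{\ell,\ff}-\bar K_n^{\ell,\fc}$ for the coarse terms, and I expand each coarse postfactor as $H^\bT(\bar S^{\fc})^{-1} = B + H^\bT[(\bar S^{\fc})^{-1}-(\bar S^{\ff})^{-1}]$. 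Collecting the terms that carry both the common prefactor $A$ and the common postfactor $B$ produces the single main term
\[
A\left[C_n^{\ell,\ff} - \frac{C_{n}^{\ell,\fc_1}+C_{n}^{\ell,\fc_2}}{2} - \prt{\bar C_n^{\ell,\ff}-\bar C_n^{\ell,\fc}}\right]B,
\]
whose $L^p$-norm is $\lesssim \norm{C_n^{\ell,\ff}-\frac{C_{n}^{\ell,\fc_1}+C_{n}^{\ell,\fc_2}}{2}-(\bar C_n^{\ell,\ff}-\bar C_n^{\ell,\fc})}_p$, i.e.\ exactly the first term on the right of~\eqref{dbKG}.

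It then remains to show that every remainder is $\cO(P_\ell^{-1/2}N_\ell^{-\beta/2}+P_\ell^{-1})$. These are of two kinds. The first kind is a product of two statistically small factors, e.g.\ $(K_n^{\ell,\ff}-\bar K_n^{\ell,\ff})H(C_n^{\ell,\ff}-\bar C_n^{\ell,\ff})B$, for which Lemma~\ref{lem:EnKFGainCont}, Lemma~\ref{lem:EnKFCovErr} and Lemma~\ref{lemma:EnKFdistEns} (with the coarse-level analogues Corollary~\ref{covErr} and Corollary~\ref{distEns}) give $\norm{K-\bar K}_{2p}\lesssim P_\ell^{-1/2}$ and $\norm{C-\bar C}_{2p}\lesssim P_\ell^{-1/2}$, so Hölder's inequality bounds the product by $P_\ell^{-1}$. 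The second kind pairs a deterministic mean-field-small factor with a statistically small one, e.g.\ $A(C_n^{\ell,\fc_k}-\bar C_n^{\ell,\fc})H^\bT[(\bar S^{\fc})^{-1}-(\bar S^{\ff})^{-1}]$ and $(\bar K_n^{\ell,\ff}-\bar K_n^{\ell,\fc})H(C_n^{\ell,\fc_k}-\bar C_n^{\ell,\fc})H^\bT(\bar S^{\fc})^{-1}$, for which Lemma~\ref{lemma:MFGainCont}, Lemma~\ref{lemma:MFcovErr} and Lemma~\ref{lemma:distMFens} give $|\bar K_n^{\ell,\ff}-\bar K_n^{\ell,\fc}|_2\lesssim N_\ell^{-\beta/2}$ and $|(\bar S^{\fc})^{-1}-(\bar S^{\ff})^{-1}|_2\lesssim N_\ell^{-\beta/2}$; combined with $\norm{C_n^{\ell,\fc_k}-\bar C_n^{\ell,\fc}}_p\lesssim P_\ell^{-1/2}$ this yields $P_\ell^{-1/2}N_\ell^{-\beta/2}$. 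Summing the finitely many remainders and using $P_{\ell-1}\eqsim P_\ell$ closes the estimate.

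The main obstacle is the bookkeeping in the second step: after forcing the common factors $A$ and $B$, one must verify that every surviving cross term is genuinely a product of two small quantities of the two types above, and that none accidentally retains a lone $\cO(P_\ell^{-1/2})$ or $\cO(N_\ell^{-\beta/2})$ factor without a companion, which would spoil the rate. The symmetric averaging over $\fc_1,\fc_2$ together with the fact that both coarse differences are taken against the single mean-field gain $\bar K_n^{\ell,\fc}$ is precisely what makes the lone $\cO(P_\ell^{-1/2})$ contributions coalesce into the covariance double difference rather than survive on their own.
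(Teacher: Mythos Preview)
Your proposal is correct and follows essentially the same route as the paper: write the gain double difference as $(K_n^{\ell,\ff}-\bar K_n^{\ell,\ff})-\tfrac12\sum_k(K_n^{\ell,\fc_k}-\bar K_n^{\ell,\fc})$, apply the identity $\mathcal K(C_1)-\mathcal K(C_2)=(I-\mathcal K(C_1)H)(C_1-C_2)H^\bT(HC_2H^\bT+\Gamma)^{-1}$ to each piece, force common pre- and postfactors so that the covariance double difference emerges as the main term, and check that every cross term is a product of two small factors of type $\cO(P_\ell^{-1/2})$ or $\cO(N_\ell^{-\beta/2})$. The only cosmetic difference is that the paper chooses the \emph{random} prefactor $I-K_n^{\ell,\ff}H$ and the coarse postfactor $H^\bT(H\bar C_n^{\ell,\fc}H^\bT+\Gamma)^{-1}$ (substituting the resolvent identity into the fine term), whereas you choose the deterministic prefactor $I-\bar K_n^{\ell,\ff}H$ and the fine postfactor $H^\bT(H\bar C_n^{\ell,\ff}H^\bT+\Gamma)^{-1}$; both choices are bounded and lead to the same remainder structure, so the argument goes through either way.
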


\begin{proof}From the proof of~\cite[Lemma 3.4]{hoel2016}, one may deduce that
	\begin{equation*}
	\begin{split}
	K_n^{\ell, \ff}-\bar{K}_n^{\ell,\ff}=&K_n^{\ell, \ff}H(\bar{C}_{n}^{\ell,\ff}-C_n^{\ell,\ff})H^{\bT}(H\bar{C}_{n}^{\ell,\ff}H^{\bT}+\Gamma)^{-1}+(C_n^{\ell,\ff}-\bar{C}_n^{\ell,\ff})H^{\bT}(H\bar{C}_{n}^{\ell,\ff}H^{\bT}+\Gamma)^{-1},\\
	K_{n}^{\ell,\fc_j}-\bar{K}_n^{\ell,\fc}=&K_{n}^{\ell,\fc_j}H(\bar{C}_{n}^{\ell,\fc}-C_{n}^{\ell,\fc_j})H^{\bT}(H\bar{C}_{n}^{\ell,\fc}H^{\bT}+\Gamma)^{-1}+(C_{n}^{\ell,\fc_j}-\bar{C}_n^{\ell,\fc})H^{\bT}(H\bar{C}_{n}^{\ell,\fc}H^{\bT}+\Gamma)^{-1},
	\end{split}
	\end{equation*}
	and 
	\begin{equation*}\label{KGformula}
	\begin{split}
	(H\bar{C}_{n}^{\ell,\ff}H^{\bT}+\Gamma)^{-1}=(H\bar{C}_{n}^{\ell,\fc}H^{\bT}+\Gamma)^{-1}+(H\bar{C}_{n}^{\ell,\fc}H^{\bT}+\Gamma)^{-1}(\bar{C}_n^{\ell,\fc}-\bar{C}_n^{\ell,\ff})H^{\bT}(H\bar{C}_{n}^{\ell,\ff}H^{\bT}+\Gamma)^{-1}.
	\end{split}
	\end{equation*}
	The above equations imply that 
	\begin{equation*}
	\begin{split}
	K_n^{\ell, \ff}-\bar{K}_n^{\ell,\ff}=&K_n^{\ell, \ff}H(\bar{C}_{n}^{\ell,\ff}-C_n^{\ell,\ff})H^{\bT}(H\bar{C}_{n}^{\ell,\fc}H^{\bT}+\Gamma)^{-1}\\
	&+K_n^{\ell, \ff}H(\bar{C}_{n}^{\ell,\ff}-C_n^{\ell,\ff})H^{\bT}(H\bar{C}_{n}^{\ell,\fc}H^{\bT}+\Gamma)^{-1}(\bar{C}_n^{\ell,\fc}-\bar{C}_n^{\ell,\ff})H^{\bT}(H\bar{C}_{n}^{\ell,\ff}H^{\bT}+\Gamma)^{-1}\\
	&+(C_n^{\ell,\ff}-\bar{C}_n^{\ell,\ff})H^{\bT}(H\bar{C}_{n}^{\ell,\ff}H^{\bT}+\Gamma)^{-1},
	\end{split}
	\end{equation*}
	and
	\begin{equation*}
	\begin{split}
	&K_n^{\ell, \ff}-\frac{K_{n}^{\ell,\fc_1}+K_{n}^{\ell,\fc_2}}{2}-(\bar{K}_n^{\ell,\ff}-\bar{K}_n^{\ell,\fc})=K_n^{\ell, \ff}-\bar{K}_n^{\ell,\ff}-\frac{1}{2}\bigg( K_{n}^{\ell,\fc_1} - \bar{K}_n^{\ell,\fc}+ K_{n}^{\ell,\fc_2} - \bar{K}_n^{\ell,\fc}\bigg)\\
	&=(I-K_n^{\ell, \ff}H)\bigg(C_n^{\ell,\ff}-\frac{C_{n}^{\ell,\fc_1}+C_{n}^{\ell,\fc_2}}{2}-\bar{C}_{n}^{\ell,\ff}+\bar{C}_{n}^{\ell,\fc}\bigg)H^{\bT}(H\bar{C}_{n}^{\ell,\fc}H^{\bT}+\Gamma)^{-1}\\
	&+(I-K_n^{\ell, \ff}H)(\bar{C}_n^{\ell,\ff}-C_n^{\ell,\ff})H^{\bT}(H\bar{C}_{n}^{\ell,\fc}H^{\bT}+\Gamma)^{-1}H(\bar{C}_n^{\ell,\fc}-\bar{C}_n^{\ell,\ff})H^{\bT}(H\bar{C}_{n}^{\ell,\ff}H^{\bT}+\Gamma)^{-1}\\
	&-(K_{n}^{\ell,\fc_1}-K_n^{\ell,\ff})H(\bar{C}_n^{\ell,\fc}-\frac{C_{n}^{\ell,\fc_1}+C_{n}^{\ell,\fc_2}}{2})H^{\bT}(H\bar{C}_{n}^{\ell,\fc}H^{\bT}+\Gamma)^{-1}\\
	&-\frac{1}{2}(K_{n}^{\ell,\fc_2}-K_{n}^{\ell,\fc_1})H(\bar{C}_n^{\ell,\fc}-C_{n}^{\ell,\fc_2})H^{\bT}(H\bar{C}_{n}^{\ell,\fc}H^{\bT}+\Gamma)^{-1}.
	\end{split}
	\end{equation*}
        By the positive definiteness of $\Gamma$,
        Lemmas~\ref{lemma:MFGainCont}, ~\ref{lemma:MFcovErr} and~\ref{lemma:distMFens}, and corollaries~\ref{covErr} and~\ref{distEns},
	\begin{equation*}
	\begin{split}
	  &\norm{K_n^{\ell, \ff}-\frac{K_{n}^{\ell,\fc_1}+K_{n}^{\ell,\fc_2}}{2}-(\bar{K}_n^{\ell,\ff}-\bar{K}_n^{\ell,\fc})}_p
          \lesssim \norm{C_n^{\ell,\ff}-\frac{C_{n}^{\ell,\fc_1}+C_{n}^{\ell,\fc_2}}{2}-\bar{C}_{n}^{\ell,\ff}+\bar{C}_{n}^{\ell,\fc}}_p\\
	  &+\norm{\bar{C}_n^{\ell,\ff}-C_n^{\ell,\ff}}_p\abs{\bar{C}_n^{\ell,\fc}-\bar{C}_n^{\ell,\ff}}_2
          +\norm{K_{n}^{\ell,\fc_1}-K_n^{\ell,\ff}}_p\norm{\bar{C}_n^{\ell,\fc}-\frac{C_{n}^{\ell,\fc_1}+C_{n}^{\ell,\fc_2}}{2}}_p\\
	&+\frac{1}{2}\norm{K_{n}^{\ell,\fc_2}-K_{n}^{\ell,\fc_1}}_p\norm{\bar{C}_n^{\ell,\fc}-C_{n}^{\ell,\fc_2}}_p\\
	  &\lesssim \norm{C_n^{\ell,\ff}-\frac{C_{n}^{\ell,\fc_1}+C_{n}^{\ell,\fc_2}}{2}-\bar{C}_{n}^{\ell,\ff}+\bar{C}_{n}^{\ell,\fc}}_p
          +P_{\ell}^{-1/2}N_{\ell}^{-\beta/2}+P_{\ell}^{-1}
	\end{split}
	\end{equation*}
	where the last inequality follows from 
	\begin{equation*}
	\begin{split}
	\norm{K_{n}^{\ell,\fc_1}-K_n^{\ell,\ff}}_p &\lesssim \norm{C_{n}^{\ell,\fc_1}-C_n^{\ell,\ff}}_p \\
	&\leq \norm{C_{n}^{\ell,\fc_1}-\bar{C}_n^{\ell,\fc}}_p + \abs{\bar{C}_n^{\ell,\fc}-\bar{C}_n^{\ell,\ff}}_2+\norm{\bar{C}_n^{\ell,\ff}-C_n^{\ell,\ff}}_p
	\end{split}
	\end{equation*}
        and
	\begin{equation}\label{eq:kalmanGainCoarseDiff}
	\norm{K_{n}^{\ell,\fc_1}-K_{n}^{\ell,\fc_2}}_p \lesssim \norm{C_{n}^{\ell,\fc_1}-C_{n}^{\ell,\fc_2}}_p \leq \norm{C_{n}^{\ell,\fc_1}-\bar{C}_n^{\ell,\fc}}_p + \norm{\bar{C}_n^{\ell,\fc}-C_{n}^{\ell,\fc_2}}_p.
	\end{equation}

\end{proof}

We next show how to bound the first term on the right hand side of~\eqref{dbKG}.
\begin{lemma}[Continuity of covariance matrix double differences]\label{lemma:dbCovErr}
For any $n\ge0$ and $p\ge 2$, the following asymptotic inequality holds:
\begin{equation*}
\begin{split}
  \Big\|C_n^{\ell,\ff}-\frac{C_{n}^{\ell,\mathbf{c_1}}+C_{n}^{\ell,\mathbf{c_2}}}{2}-\bar{C}_{n}^{\ell,\ff}+\bar{C}_{n}^{\ell,\fc}\Big\|_p
  &\lesssim \Big\|E_{P_\ell}\Big[\prt{v_{n}^{\ell,\ff_j}-v_{n}^{\ell,\fc_j}-\bar{v}_{n}^{\ell,\ff_j}+\bar{v}_{n}^{\ell,\fc_j}}\prt{ \bar{v}_{n}^{\ell,\ff_j}}^\bT\Big]\Big\|_{p}\\
&+\norm{E_{P_\ell}[v_{n}^{\ell,\ff_j}-v_{n}^{\ell,\fc_j}-\bar{v}_{n}^{\ell,\ff_j}+\bar{v}_{n}^{\ell,\fc_j}]}_{2p}\\
&+P_{\ell}^{-1/2}N_{\ell}^{-\beta/2}+P_{\ell}^{-1}.
\end{split}
\end{equation*}
\end{lemma}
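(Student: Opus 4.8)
The plan is to mirror the two-step structure of Lemma~\ref{lem:EnKFCovErr}: first replace the deterministic mean-field covariances $\bar C_n^{\ell,\ff},\bar C_n^{\ell,\fc}$ by the auxiliary sample covariances $\cBarTilde_n^{\ell,\ff}:=\Cov[\bar v_n^{\ell,\ff}]$ and $\cBarTilde_n^{\ell,\fc_j}:=\Cov[\bar v_n^{\ell,\fc_j}]$ built from the shadowing mean-field prediction particles of~\eqref{eq:mfmlenkfDynamics}, and then estimate the two resulting contributions separately. By the triangle inequality,
\[
\Big\|C_n^{\ell,\ff}-\tfrac{C_n^{\ell,\fc_1}+C_n^{\ell,\fc_2}}{2}-\bar C_n^{\ell,\ff}+\bar C_n^{\ell,\fc}\Big\|_p \le \mathrm{(I)}+\mathrm{(II)},
\]
where $\mathrm{(I)}$ is the same double difference with $\bar C_n^{\ell,\ff}-\bar C_n^{\ell,\fc}$ replaced by $\cBarTilde_n^{\ell,\ff}-\tfrac12(\cBarTilde_n^{\ell,\fc_1}+\cBarTilde_n^{\ell,\fc_2})$, and $\mathrm{(II)}$ is the remaining statistical error $\cBarTilde_n^{\ell,\ff}-\tfrac12(\cBarTilde_n^{\ell,\fc_1}+\cBarTilde_n^{\ell,\fc_2})-(\bar C_n^{\ell,\ff}-\bar C_n^{\ell,\fc})$.

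For $\mathrm{(I)}$, I would use the relabeling $\ff_j\leftrightarrow\fc_j$ so that every sample covariance is an empirical average $E_{P_\ell}$ over coupled pairs, decompose each covariance into its second-moment part $E_{P_\ell}[vv^\bT]$ and its mean-outer-product part $E_{P_\ell}[v]E_{P_\ell}[v]^\bT$, and expand all products via the identity $ab^\bT-cd^\bT=(a-c)b^\bT+c(b-d)^\bT$. Writing $a_j:=v_n^{\ell,\ff_j}-\bar v_n^{\ell,\ff_j}$, $b_j:=v_n^{\ell,\fc_j}-\bar v_n^{\ell,\fc_j}$ and the double difference $\delta_j:=a_j-b_j=v_n^{\ell,\ff_j}-v_n^{\ell,\fc_j}-\bar v_n^{\ell,\ff_j}+\bar v_n^{\ell,\fc_j}$, each term of $\mathrm{(I)}$ either reduces to $E_{P_\ell}[\delta_j(\bar v_n^{\ell,\ff_j})^\bT]$ (and its transpose), giving the first term on the right-hand side, to $E_{P_\ell}[\delta_j]$ paired (through Hölder, hence in $L^{2p}$) against a bounded mean-field first moment, giving the second term, or to a product of two genuinely small factors. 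The latter cross terms I would bound by Hölder together with the per-particle shadowing estimates $\|a_j\|_{2p},\|b_j\|_{2p}\lesssim P_\ell^{-1/2}$ (the prediction analogue of Corollary~\ref{distEns}, obtained from Lemma~\ref{lemma:EnKFdistEns} and Assumptions~\ref{ass:psi}(iii) and~\ref{ass:psi2}), the mean-field stability $\|\bar v_n^{\ell,\ff}-\bar v_n^{\ell,\fc}\|_{2p}\lesssim N_\ell^{-\beta/2}$ (Lemma~\ref{lemma:distMFens}), and the uniform moment bounds $\bar v_n^{\ell,\cdot}\in\cap_{p\ge2}L^p(\Omega,\bR^d)$; this leaves a residual of order $P_\ell^{-1/2}N_\ell^{-\beta/2}+P_\ell^{-1}$.

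For $\mathrm{(II)}$, each $\cBarTilde$ is the biased sample covariance of iid mean-field particles, so subtracting its expectation $\bar C$ produces a centered empirical average to which I would apply the M--Z inequality. On the second-moment part the summand is $\bar v_n^{\ell,\ff_j}(\bar v_n^{\ell,\ff_j})^\bT-\bar v_n^{\ell,\fc_j}(\bar v_n^{\ell,\fc_j})^\bT$ minus its mean, whose $L^p$-norm is $\lesssim N_\ell^{-\beta/2}$ via the factorization $(\bar v_n^{\ell,\ff}-\bar v_n^{\ell,\fc})(\bar v_n^{\ell,\ff})^\bT+\bar v_n^{\ell,\fc}(\bar v_n^{\ell,\ff}-\bar v_n^{\ell,\fc})^\bT$, Hölder, and Lemma~\ref{lemma:distMFens}; the extra $P_\ell^{-1/2}$ from M--Z then yields $P_\ell^{-1/2}N_\ell^{-\beta/2}$. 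The mean-outer-product part of $\mathrm{(II)}$ contributes products of two first-moment Monte-Carlo errors, each $O(P_\ell^{-1/2})$, hence $O(P_\ell^{-1})$, together with mixed terms again of order $P_\ell^{-1/2}N_\ell^{-\beta/2}$. Summing, $\mathrm{(II)}\lesssim P_\ell^{-1/2}N_\ell^{-\beta/2}+P_\ell^{-1}$, which matches the last two terms of the claim.

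The main obstacle is the bookkeeping in $\mathrm{(I)}$: because the fine covariance subtracts a single empirical mean over all $P_\ell$ particles whereas the two coarse covariances subtract their own means over $P_{\ell-1}$ particles each, the mean-outer-product contributions do not align termwise and generate cross terms such as quadratic expressions in $E_{P_{\ell-1}}[v_n^{\ell,\ff_1}]-E_{P_{\ell-1}}[v_n^{\ell,\ff_2}]$ that must be shown to be $O(P_\ell^{-1})$ rather than merely $O(P_\ell^{-1/2})$. Carefully identifying which combinations collapse into the two retained double-difference averages, and verifying that every discarded cross term genuinely carries either a factor $N_\ell^{-\beta/2}$ against a $P_\ell^{-1/2}$ or two factors $P_\ell^{-1/2}$, is the delicate part; the probabilistic inputs (M--Z, Hölder, and the stability and distance lemmas) become routine once this algebra is organized.
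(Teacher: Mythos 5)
Your proposal is correct and follows essentially the same route as the paper's proof: the same auxiliary sample covariances $\cBarTilde_n^{\ell,\ff},\cBarTilde_n^{\ell,\fc_j}$ of the shadowing mean-field particles, the same triangle-inequality split into a coupling term and a statistical term, Hölder plus the shadowing and stability lemmas (Corollary~\ref{distEns}, Lemma~\ref{lemma:distMFens}) for the former, and the M--Z inequality for the latter. Even the ``delicate'' bookkeeping you flag is resolved exactly as you anticipate — the paper merges the two coarse empirical means via the identity $aa^\bT+bb^\bT=\tfrac12[(a+b)(a+b)^\bT+(a-b)(a-b)^\bT]$, producing quadratic cross terms in $E_{P_{\ell-1}}[v_n^{\ell,\fc_1}-v_n^{\ell,\fc_2}]$ (the coarse rather than the fine sub-ensembles, a purely cosmetic difference from your description) that are shown to be $O(P_\ell^{-1})$ by precisely the shadowing-plus-M--Z argument you propose.
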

\begin{proof}
Let us first recall that
\begin{equation*}
  \begin{split}
    C_n^{\ell,\ff} = \Cov [v^{\ell,\ff}_{n}],& \quad \bar{C}_n^{\ell,\ff} = \mathrm{Cov} [\bar{v}^{\ell,\ff}_{n}],\\
    C_{n}^{\ell,\fc_j}=\Cov [v^{\ell,\fc_j}_{n}], & \quad \bar{C}_{n}^{\ell,\fc}=\mathrm{Cov}[\bar{v}^{\ell,\fc}_{n}],
  \end{split}
\end{equation*}
and introduce the following covariance matrices for the auxiliary mean-field MLEnKF ensemble
\begin{equation*}
  \cBarTilde_n^{\ell,\ff} := \Cov  [\bar{v}^{\ell,\ff}_{n}], \quad \cBarTilde_{n}^{\ell,\fc_j} := \Cov [\bar{v}^{\ell,\fc_j}_{n}].
\end{equation*}
By the triangle inequality,
\begin{equation}\label{dbCov}
  \begin{split}
    \norm{C_n^{\ell,\ff}-\frac{C_{n}^{\ell,\fc_1}+C_{n}^{\ell,\fc_2}}{2}-\bar{C}_{n}^{\ell,\ff}+\bar{C}_{n}^{\ell,\fc}}_p &\lesssim  \norm{C_n^{\ell,\ff}-\frac{C_{n}^{\ell,\fc_1}+C_{n}^{\ell,\fc_2}}{2}-\cBarTilde_n^{\ell,\ff}+\frac{\cBarTilde_{n}^{\ell,\mathbf{c_1}}+\cBarTilde_{n}^{\ell,\mathbf{c_2}}}{2}}_p\\
    &+\norm{\cBarTilde_n^{\ell,\ff}-\frac{\cBarTilde_{n}^{\ell,\fc_1}+\cBarTilde_{n}^{\ell,\fc_2}}{2}-\bar{C}_{n}^{\ell,\ff}+\frac{\bar{C}_{n}^{\ell,\fc}+\bar{C}_{n}^{\ell,\fc}}{2}}_p.
  \end{split}
\end{equation}
Using that
\begin{equation*}\label{SampleAvNot}
\begin{split}
  E_{P_{\ell-1}}\Big[\sum_{j=1}^2 \frac{v_{n}^{\ell,\fc_j}(v_{n}^{\ell,\fc_j})^{\bT}}{2} \Big] &=  E_{P_\ell}[v_n^{\ell,\fc}(v_n^{\ell,\fc})^{\bT}]\\
  \text{and} \quad  E_{P_{\ell-1}}\Big[\sum_{j=1}^2 \frac{\vBar_{n}^{\ell,\fc_j}(\vBar_{n}^{\ell,\fc_j})^{\bT}}{2} \Big] &=  E_{P_\ell}[\vBar_n^{\ell,\fc}(\vBar_n^{\ell,\fc})^{\bT}],
\end{split}
\end{equation*}
we obtain
\begin{equation*}
  \begin{split}
    \Big\|C_n^{\ell,\ff}&-\frac{C_{n}^{\ell,\fc_1}+C_{n}^{\ell,\fc_2}}{2}-\cBarTilde_n^{\ell,\ff}+\frac{\cBarTilde_{n}^{\ell,\mathbf{c_1}}+\cBarTilde_{n}^{\ell,\mathbf{c_2}}}{2}\Big\|_{p} \\
    &\leq \norm{E_{P_\ell}[v_n^{\ell,\ff}(v_n^{\ell,\ff})^{\bT}-\bar{v}_n^{\ell,\ff}(\bar{v}_n^{\ell,\ff})^{\bT}]-E_{P_\ell}[v_n^{\ell,\fc}(v_n^{\ell,\fc})^{\bT}-\bar{v}_n^{\ell,\fc}(\bar{v}_n^{\ell,\fc})^{\bT}]}_p\\
 &+ \Big\| \E_{P_\ell}[v_n^{\ell,\ff}]E_{P_\ell}[(v_n^{\ell,\ff})]^{\bT}-E_{P_\ell}[\bar{v}_n^{\ell,\ff}]E_{P_\ell}[(\bar{v}_n^{\ell,\ff})]^{\bT} \\
 &\quad -\frac{1}{2}\Big(E_{P_{\ell-1}}[v_{n}^{\ell,\mathbf{c_1}}]E_{P_{\ell-1}}[v_{n}^{\ell,\mathbf{c_1}}]^{\bT}+E_{P_{\ell-1}}[v_{n}^{\ell,\mathbf{c_2}}]E_{P_{\ell-1}}[v_{n}^{\ell,\mathbf{c_2}}]^{\bT}\\
 &\quad -E_{P_{\ell-1}}[\bar{v}_{n}^{\ell,\mathbf{c_1}}]E_{P_{\ell-1}}[\bar{v}_{n}^{\ell,\mathbf{c_1}}]^{\bT}
 \quad -E_{P_{\ell-1}}[\bar{v}_{n}^{\ell,\mathbf{c_2}}]E_{P_{\ell-1}}[\bar{v}_{n}^{\ell,\mathbf{c_2}}]^{\bT}\Big)\Big\|_p\\
 &=:\cI_{11}+\cI_{12}.
   \end{split}
\end{equation*}
For the first term, Lemma~\ref{lemma:distMFens} and Corollary~\ref{distEns} yield 
\begin{equation*}
  \begin{split}
\cI_{11} &\lesssim \norm{E_{P_\ell}[\prt{v_{n}^{\ell,\ff}-v_{n}^{\ell,\fc}-\bar{v}_{n}^{\ell,\ff}+\bar{v}_{n}^{\ell,\fc}}\prt{ \bar{v}_{n}^{\ell,\ff}}^\bT]}_{p}+\norm{v_{n}^{\ell,\ff}-v_{n}^{\ell,\fc}-\bar{v}_{n}^{\ell,\ff}+\bar{v}_{n}^{\ell,\fc}}_{2p}\norm{v_n^{\ell,\ff}-\bar{v}_n^{\ell,\ff}}_{2p}\\
&\quad +\norm{v_n^{\ell,\ff}-v_n^{\ell,\fc}}_{2p}\norm{v_n^{\ell,\fc}-\bar{v}_n^{\ell,\fc}}_{2p}+\norm{v_n^{\ell,\ff}-\bar{v}_n^{\ell,\ff}}_{2p}\norm{\bar{v}_n^{\ell,\ff}-\bar{v}_n^{\ell,\fc}}_{2p}\\
&\lesssim \norm{E_{P_\ell}[\prt{v_{n}^{\ell,\ff}-v_{n}^{\ell,\fc}-\bar{v}_{n}^{\ell,\ff}+\bar{v}_{n}^{\ell,\fc}}\prt{ \bar{v}_{n}^{\ell,\ff}}^\bT]}_{p}+P_{\ell}^{-1/2}N_{\ell}^{-\beta/2}+P_{\ell}^{-1}.
  \end{split}
\end{equation*}
For the second term, the identity $aa^\bT + bb^\bT= \frac{1}{2}\big[(a+b)(a+b)^\bT+(a-b)(a-b)^\bT\big]$
yields
\begin{equation*}
  \begin{split}
\cI_{12}&\le
 \big\|E_{P_\ell}[v_n^{\ell,\ff}]E_{P_\ell}[(v_n^{\ell,\ff})^{\bT}]-E_{P_\ell}[\bar{v}_n^{\ell,\ff}]E_{P_\ell}[(\bar{v}_n^{\ell,\ff})^{\bT}] \\
 &-E_{P_\ell}[v_n^{\ell,\fc}]E_{P_\ell}[(v_n^{\ell,\fc})]^{\bT}+E_{P_\ell}[\bar{v}_n^{\ell,\fc}]E_{P_\ell}[(\bar{v}_n^{\ell,\fc})]^{\bT} \big\|_p \\
 &+ \big\|\frac{1}{4}E_{P_{\ell-1}}[v_{n}^{\ell,\mathbf{c_1}}-v_{n}^{\ell,\mathbf{c_2}}]E_{P_{\ell-1}}[v_{n}^{\ell,\mathbf{c_1}}-v_{n}^{\ell,\mathbf{c_2}}]^\bT\\
 &+\frac{1}{4}E_{P_{\ell-1}}[\bar{v}_{n}^{\ell,\mathbf{c_1}}-\bar{v}_{n}^{\ell,\mathbf{c_2}}]E_{P_{\ell-1}}[\bar{v}_{n}^{\ell,\mathbf{c_1}}-\bar{v}_{n}^{\ell,\mathbf{c_2}}]^\bT\big\|_p =:\cI_{121} + \cI_{122}
   \end{split}
\end{equation*}
The term $\cI_{121}$ can be bounded in a similar fashion as $\cI_{11}$,
and to bound the second term, we employ the identity $a a^\bT - b b^\bT = \frac{1}{2}[(a+b)(a-b)^\bT+(a-b)(a+b)^\bT]$,
Jensen's inequality and Corollary~\ref{distEns}: 
\[
\begin{split}
  \cI_{122} &\lesssim \norm{E_{P_{\ell-1}}[v_{n}^{\ell,\mathbf{c_1}}-v_{n}^{\ell,\mathbf{c_2}}+\bar{v}_{n}^{\ell,\mathbf{c_1}}-\bar{v}_{n}^{\ell,\mathbf{c_2}}]}_{2p}\norm{E_{P_{\ell-1}}[v_{n}^{\ell,\mathbf{c_1}}-v_{n}^{\ell,\mathbf{c_2}}-\bar{v}_{n}^{\ell,\mathbf{c_1}}+\bar{v}_{n}^{\ell,\mathbf{c_2}}]}_{2p}\\
  & \lesssim \norm{v_{n}^{\ell,\mathbf{c}} -\bar{v}_{n}^{\ell,\mathbf{c}} }_{2p}^2 \lesssim P_\ell^{-1}.
\end{split}
\]
Consequently,
\begin{equation*}\label{W12}
\begin{split}
\cI_{12} &\lesssim  \norm{E_{P_\ell}[v_{n}^{\ell,\ff}-v_{n}^{\ell,\fc}-\bar{v}_{n}^{\ell,\ff}+\bar{v}_{n}^{\ell,\fc}]}_{2p}+P_{\ell}^{-1/2}N_{\ell}^{-\beta/2}+P_{\ell}^{-1}.
\end{split}
\end{equation*}
 
For the second term in~\eqref{dbCov}, the equation
$\E_{P_\ell}[\vBar_n^{\ell,\fc}] = \E_{P_{\ell-1}}[(\vBar_n^{\ell,\fc_1} + \vBar_n^{\ell,\fc_2})/2]$
implies that
\begin{equation*}
  \begin{split}
    \Big\|\cBarTilde_n^{\ell,\ff}-&\frac{\cBarTilde_{n}^{\ell,\mathbf{c_1}}+\cBarTilde_{n}^{\ell,\mathbf{c_2}}}{2}-\bar{C}_{n}^{\ell,\ff}+\bar{C}_{n}^{\ell,\fc}\Big\|_p \\
 &\leq \norm{E_{P_\ell}\big[\bar{v}_n^{\ell,\ff}(\bar{v}_n^{\ell,\ff})^{\bT}-\bar{v}_{n}^{\ell,\fc}(\bar{v}_{n}^{\ell,\fc})^{\bT}-\Ex{\bar{v}_n^{\ell,\ff}(\bar{v}_n^{\ell,\ff})^{\bT}-\bar{v}_n^{\ell,\fc}(\bar{v}_n^{\ell,\fc})^{\bT}}\big]}_p\\
&    +\Big\|E_{P_{\ell}}[\bar{v}_{n}^{\ell,\ff}]E_{P_{\ell}}[(\bar{v}_{n}^{\ell,\ff})^\bT]-\Ex{\bar{v}_n^{\ell,\ff}}\Ex{(\bar{v}_n^{\ell,\ff})^{\bT}}\\
& \quad  -E_{P_{\ell}}[\bar{v}_{n}^{\ell,\fc}]E_{P_{\ell}}[(\bar{v}_{n}^{\ell,\fc})^{\bT}]+\Ex{\bar{v}_{n}^{\ell,\fc}}\Ex{(\bar{v}_{n}^{\ell,\fc})^{\bT}}\Big\|_p\\
& + \frac{1}{4}\norm{E_{P_{\ell-1}}[\bar{v}_{n}^{\ell,\fc_1}-\bar{v}_{n}^{\ell,\fc_2}]E_{P_{\ell-1}}[(\bar{v}_{n}^{\ell,\fc_1}-\bar{v}_{n}^{\ell,\fc_2})^\bT]}_p\\
 &=:\cI_{21}+\cI_{22} + \cI_{23}.
  \end{split}
\end{equation*}
H{\"o}lder's inequality, Lemma~\ref{lemma:distMFens} and the M-Z inequality 
imply that 
\[
\cI_{21} \lesssim P_{\ell}^{-1/2} N_{\ell}^{-\beta/2}, \quad \cI_{22} \lesssim P_{\ell}^{-1/2}N_{\ell}^{-\beta/2}+P_{\ell}^{-1}, \quad \text{and} \quad 
\cI_{23} \lesssim P_\ell^{-1}
\]
(where $\Ex{\bar{v}_{n}^{\ell,\fc_1}-\bar{v}_{n}^{\ell,\fc_2}} =0$ was used in the last inequality). 

\end{proof}

\begin{lemma} \label{lemma:phiDoubleUpdate}
For any $n\ge 0$, $p\ge 2$, denoting Jacobian of $\varphi$ by $D\varphi$, it holds that
\begin{equation*}
  \begin{split}
& \Big\|E_{P_{\ell}}\big[ D\varphi( \vBarHat_{n}^{\ell,\fc})(\vHat_{n}^{\ell,\ff}-\vHat_{n}^{\ell,\fc}-\vBarHat_{n}^{\ell,\ff}+\vBarHat_{n}^{\ell,\fc})\Big\|_p\\    
& \lesssim \Big\|E_{P_{\ell}}\Big[ D\varphi(\vBarHat_{n}^{\ell,\fc}) (I-K_n^{\ell,\ff} H) ( v_{n}^{\ell,\ff}-v_{n}^{\ell,\fc}-\bar{v}_{n}^{\ell,\ff}+\bar{v}_{n}^{\ell,\fc}) \Big]\Big\|_{p}\\
    &+\norm{E_{P_\ell}[\prt{v_{n}^{\ell,\ff}-v_{n}^{\ell,\fc}-\bar{v}_{n}^{\ell,\ff}+\bar{v}_{n}^{\ell,\fc}}\prt{ \bar{v}_{n}^{\ell,\ff}}^\bT]}_{2p}
    +\norm{E_{P_\ell}[v_{n}^{\ell,\ff}-v_{n}^{\ell,\fc}-\bar{v}_{n}^{\ell,\ff}+\bar{v}_{n}^{\ell,\fc}]}_{2p}\\
&+P_{\ell}^{-1/2}N_{\ell}^{-\beta/2}+P_{\ell}^{-1}.
\end{split}
\end{equation*}
\end{lemma}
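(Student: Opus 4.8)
The plan is to expand the MLEnKF update~\eqref{ml:update} and the auxiliary mean-field update~\eqref{eq:mfmlenkfUpdates} for each of the four particle families and isolate a leading term plus lower-order remainders. First I would split the $P_\ell$-sample average into the two coupled sub-ensembles indexed by $j=1,2$ (via the $\ff_j\leftrightarrow\fc_j$ convention), so that within each half the gains $K_n^{\ell,\ff}$, $K_n^{\ell,\fc_j}$, $\bar K_n^{\ell,\ff}$, $\bar K_n^{\ell,\fc}$ are all constant in the particle index $i$. Applying the update-difference identity already used in~\eqref{vHat-vBarHat} separately to the fine pair $(\vHat^{\ell,\ff},\vBarHat^{\ell,\ff})$ and the coarse pair $(\vHat^{\ell,\fc},\vBarHat^{\ell,\fc})$, and then adding and subtracting $(I-K_n^{\ell,\ff}H)(v_n^{\ell,\fc}-\bar v_n^{\ell,\fc})$, yields (suppressing $i$, within half $j$)
\[
\vHat_{n}^{\ell,\ff}-\vHat_{n}^{\ell,\fc}-\vBarHat_{n}^{\ell,\ff}+\vBarHat_{n}^{\ell,\fc}
= (I-K_n^{\ell,\ff}H)\big(v_n^{\ell,\ff}-v_n^{\ell,\fc}-\bar v_n^{\ell,\ff}+\bar v_n^{\ell,\fc}\big) + \mathrm{E}_1+\mathrm{E}_2+\mathrm{E}_3,
\]
with $\mathrm{E}_1=-(K_n^{\ell,\ff}-K_n^{\ell,\fc_j})H(v_n^{\ell,\fc}-\bar v_n^{\ell,\fc})$, $\mathrm{E}_2=\delta K_j(\tilde y_n^\ell-H\bar v_n^{\ell,\ff})$ where $\delta K_j:=(K_n^{\ell,\ff}-\bar K_n^{\ell,\ff})-(K_n^{\ell,\fc_j}-\bar K_n^{\ell,\fc})$, and $\mathrm{E}_3=-(K_n^{\ell,\fc_j}-\bar K_n^{\ell,\fc})H(\bar v_n^{\ell,\ff}-\bar v_n^{\ell,\fc})$. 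Multiplying by $D\varphi(\vBarHat_n^{\ell,\fc})$ and averaging, the leading term reproduces exactly the first quantity on the right-hand side.

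For the two easy remainders I would exploit that each gain factor is constant in $i$: writing the $(\ell,j)$-contribution as a Frobenius pairing $\Frob{\cdot,\cdot}$ of the (random, $i$-independent) gain matrix against the sample-averaged outer product $E_{P_{\ell-1}}[D\varphi(\vBarHat^{\ell,\fc})\otimes(\cdots)]$, Cauchy--Schwarz in the Frobenius norm followed by H\"older in $\Omega$ separates the two factors. The gain factors are controlled by estimates established inside the proof of Lemma~\ref{DoubleGainCont}, namely $\norm{K_n^{\ell,\ff}-K_n^{\ell,\fc_j}}_{2p}\lesssim P_\ell^{-1/2}+N_\ell^{-\beta/2}$ and $\norm{K_n^{\ell,\fc_j}-\bar K_n^{\ell,\fc}}_{2p}\lesssim P_\ell^{-1/2}$ (Corollaries~\ref{covErr} and~\ref{distEns}); the companion sample averages are bounded by $P_\ell^{-1/2}$ through Corollary~\ref{distEns} for $\mathrm{E}_1$ and by $N_\ell^{-\beta/2}$ through Lemma~\ref{lemma:distMFens} for $\mathrm{E}_3$, using the polynomial growth of $D\varphi$ and the uniform moment bounds on $\vBarHat^{\ell,\fc}$, $\bar v^{\ell,\ff}$, $\tilde y^\ell$. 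This gives $\mathrm{E}_1,\mathrm{E}_3\lesssim P_\ell^{-1/2}N_\ell^{-\beta/2}+P_\ell^{-1}$, which is absorbed into the remainder.

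The delicate term is $\mathrm{E}_2$, and it is the crux of the argument: since $\norm{\delta K_j}_p\lesssim P_\ell^{-1/2}$ while $\tilde y_n^\ell-H\bar v_n^{\ell,\ff}$ is only $O(1)$, bounding each half in isolation would produce a spurious $P_\ell^{-1/2}$; the required smallness appears only after averaging the two halves. I would again factor the $i$-independent matrix $\delta K_j$ out of the $j$-th sample average as $\Frob{\delta K_j,S_j}$ with $S_j:=E_{P_{\ell-1}}[D\varphi(\vBarHat^{\ell,\fc})\otimes(\tilde y^\ell-H\bar v^{\ell,\ff})]$, and decompose about the half-averages $\bar{\delta K}=\tfrac12(\delta K_1+\delta K_2)$, $\bar S=\tfrac12(S_1+S_2)$:
\[
\tfrac12\sum_{j=1}^2 \Frob{\delta K_j,S_j}
= \Frob{\bar{\delta K},\bar S} + \tfrac12\sum_{j=1}^2 \Frob{\delta K_j-\bar{\delta K},\,S_j-\bar S}.
\]
Here $\bar{\delta K}=K_n^{\ell,\ff}-\tfrac12(K_n^{\ell,\fc_1}+K_n^{\ell,\fc_2})-(\bar K_n^{\ell,\ff}-\bar K_n^{\ell,\fc})$ is exactly the gain double difference, so Lemma~\ref{DoubleGainCont} followed by Lemma~\ref{lemma:dbCovErr} bounds $\norm{\bar{\delta K}}_{2p}$ by the covariance double difference, hence by the second and third right-hand-side quantities plus $P_\ell^{-1/2}N_\ell^{-\beta/2}+P_\ell^{-1}$, while $\norm{\bar S}_{2p}\lesssim 1$. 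For the fluctuation term I would use $\delta K_j-\bar{\delta K}=\mp\tfrac12(K_n^{\ell,\fc_1}-K_n^{\ell,\fc_2})$, bounded by $P_\ell^{-1/2}$ via~\eqref{eq:kalmanGainCoarseDiff}, together with $\norm{S_1-S_2}_{2p}\lesssim P_\ell^{-1/2}$, which follows from the Marcinkiewicz--Zygmund inequality since $S_1,S_2$ are sample averages of iid summands with a common mean; their product is thus $O(P_\ell^{-1})$. This average/fluctuation splitting of $\mathrm{E}_2$ is the main obstacle and the step deserving the most care, as it is precisely what converts the individually-$O(P_\ell^{-1/2})$ gain perturbations into the covariance double difference of Lemma~\ref{lemma:dbCovErr} plus a genuinely higher-order $O(P_\ell^{-1})$ fluctuation; collecting the leading term and all remainders then establishes the claim.
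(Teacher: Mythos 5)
Your proposal is correct and follows essentially the same route as the paper's proof: the same extraction of the leading $(I-K_n^{\ell,\ff}H)$ term from the update double difference, the same H\"older-plus-continuity-lemma bounds for the easy remainders, and for the crux term the same average/fluctuation splitting of the gain perturbation (your polarization identity is just a tidier phrasing of the paper's splitting $K_n^{\ell,\fc_1}=\tfrac12(K_n^{\ell,\fc_1}+K_n^{\ell,\fc_2})+\tfrac12(K_n^{\ell,\fc_1}-K_n^{\ell,\fc_2})$), bounded via Lemma~\ref{DoubleGainCont}, Lemma~\ref{lemma:dbCovErr}, \eqref{eq:kalmanGainCoarseDiff}, and the Marcinkiewicz--Zygmund inequality applied to the mean-zero difference of the two half-ensemble sample averages.
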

\begin{proof}
  Let $\tilde{y}_n^{\ell,j}$ denote the perturbed observation associated with $(v_{n}^{\ell,\ff_j},v_{n}^{\ell,\fc_j})$.
  By the update equations~\eqref{ml:update} and~\eqref{eq:mfmlenkfDynamics},
  and $(\vBar_{n}^{\ell,\ff_j},\vBar_{n}^{\ell,\fc_j})$, we obtain the representation
\begin{equation}\label{dbUpdate}
    \begin{split}
      \vHat_{n}^{\ell,\ff_j}-\vHat_{n}^{\ell,\fc_j}&-\vBarHat_{n}^{\ell,\ff_j}+\vBarHat_{n}^{\ell,\fc_j}
        = \big(I-K_n^{\ell,\ff} H\big)( v_{n}^{\ell,\ff_j}-v_{n}^{\ell,\fc_j}-\bar{v}_{n}^{\ell,\ff_j}+\bar{v}_{n}^{\ell,\fc_j})\\
        &+(\bar{K}_n^{\ell,\ff}-K_n^{\ell,\ff})H( \bar{v}_{n}^{\ell,\ff_j}-\bar{v}_{n}^{\ell,\fc_j})-(\bar{K}_n^{\ell,\ff}-\bar{K}_{n}^{\ell,\fc})H (v_{n}^{\ell,\fc_j}-\bar{v}_{n}^{\ell,\fc_j})\\
        &- (K_n^{\ell,\ff}-K_{n}^{\ell,\fc_j}-\bar{K}_n^{\ell,\ff}+\bar{K}_{n}^{\ell,\fc})H( v_{n}^{\ell,\fc_j}-\bar{v}_{n}^{\ell,\fc_j})\\
        &- (K_n^{\ell,\ff}-K_{n}^{\ell,\fc_j}-\bar{K}_n^{\ell,\ff}+\bar{K}_{n}^{\ell,\fc})(H\bar{v}_{n}^{\ell,\fc_j} - \tilde{y}_{n}^{\ell,j}),
    \end{split}
\end{equation}
By~\eqref{dbUpdate} and H{\"o}lder's inequality
\begin{equation}\label{dbEnswithVarphi}
  \begin{split}
    & \Big\|E_{P_{\ell}}\big[ D\varphi( \vBarHat_{n}^{\ell,\fc})(\vHat_{n}^{\ell,\ff}-\vHat_{n}^{\ell,\fc}-\vBarHat_{n}^{\ell,\ff}+\vBarHat_{n}^{\ell,\fc})\Big\|_p  \\
      &=
 \Big\|E_{P_{\ell-1}}\big[\sum_{j=1}^{2} D\varphi( \vBarHat_{n}^{\ell,\fc_j})(\vHat_{n}^{\ell,\ff_j}-\vHat_{n}^{\ell,\fc_j}-\vBarHat_{n}^{\ell,\ff_j}+\vBarHat_{n}^{\ell,\fc_j})\Big\|_p\\
 & \lesssim \Big\|E_{P_{\ell-1}}\Big[\sum_{j=1}^{2}  D \varphi(\vBarHat_{n}^{\ell,\fc_j})(I-K_n^{\ell,\ff} H)( v_{n}^{\ell,\ff_j}-v_{n}^{\ell,\fc_j}-\bar{v}_{n}^{\ell,\ff_j}+\bar{v}_{n}^{\ell,\fc_j}) \Big]\Big\|_{p}\\
 &+\norm{D \varphi\prt{\vBarHat_{n}^{\ell,\fc_j}}}_{2p}\Big\{\norm{\bar{K}_n^{\ell,\ff}-K_n^{\ell,\ff}}_{4p}\norm{\bar{v}_{n}^{\ell,\ff_j}-\bar{v}_{n}^{\ell,\fc_j}}_{4p}+\norm{\bar{K}_n^{\ell,\ff}-\bar{K}_{n}^{\ell,\fc}}_{4p}\norm{v_{n}^{\ell,\fc_j}-\bar{v}_{n}^{\ell,\fc_j}}_{4p}\\
 & +\prt{\norm{K_n^{\ell,\ff}-\bar{K}_n^{\ell,\ff}}_{4p}+\norm{K_{n}^{\ell,\fc_j}-\bar{K}_{n}^{\ell,\fc}}_{4p}}\norm{v_{n}^{\ell,\fc_j}-\bar{v}_{n}^{\ell,\fc_j}}_{4p} \Big\} \\
    &+\Big\|E_{P_{\ell-1}}\Big[\sum_{j=1}^{2}D \varphi\prt{\vBarHat_{n}^{\ell,\fc_j}} (K_n^{\ell,\ff}-K_{n}^{\ell,\fc_j}-\bar{K}_n^{\ell,\ff}+\bar{K}_{n}^{\ell,\fc})\prt{H\bar{v}_{n}^{\ell,\fc_j}-\tilde{y}_{n}^{\ell,j}}\Big]\Big\|_p\\
    &=: \cJ_{1,\ell} + \cJ_{2,\ell} + \cJ_{3,\ell}.
     \end{split}
\end{equation}
The properties $\varphi \in \bF \subset C^2_{P}(\bR^d, \bR)$ and
$\vBarHat_n^{\ell,\fc} \in \cap_{r\ge 2} L^r(\Omega,\bR^d)$ imply that $\|D \varphi(\vBarHat_{n}^{\ell,\fc_j})\|_{2p} < \infty$,
and Lemmas~\ref{lemma:MFGainCont},~\ref{lemma:MFcovErr} and~\ref{lemma:distMFens}, and Corollaries~\ref{covErr} and~\ref{distEns} yield
that
\[
\cJ_{2,\ell} \lesssim P_{\ell}^{-1/2}N_{\ell}^{-\beta/2}+P_{\ell}^{-1}.
\]
For the last term, we use $K_{n}^{\ell,\fc_1} = (K_{n}^{\ell,\fc_1}+ K_{n}^{\ell,\fc_2})/2 + (K_{n}^{\ell,\fc_1} -  K_{n}^{\ell,\fc_2})/2$
and the Frobenius scalar product $\langle \cdot, \cdot \rangle_F$ on $\bR^{d \times \dimObs}\times \bR^{d \times \dimObs}$ 
to obtain
\begin{equation*}\label{KGwithObs}
    \begin{split}
      \cJ_{3,\ell} &\leq \norm{D \varphi\prt{\vBarHat_{n}^{\ell,\fc_j}} (K_n^{\ell,\ff}-\frac{K_{n}^{\ell,\mathbf{c_1}}+K_{n}^{\ell,\mathbf{c_2}}}{2}-\bar{K}_n^{\ell,\ff}+\bar{K}_{n}^{\ell,\fc})\prt{H\bar{v}_{n}^{\ell,\fc_j}-\tilde{y}_{n}^{\ell,j}} }_p\\
       &+\norm{\Frob{\frac{K_{n}^{\ell,\mathbf{c_2}}-K_{n}^{\ell,\mathbf{c_1}}}{2}\, , \, E_{P_{\ell-1}}\bigg[ D \varphi\prt{\vBarHat_{n}^{\ell,\mathbf{c_1}}}^\bT \prt{H\bar{v}_{n}^{\ell,\mathbf{c_1}}-\tilde{y}_{n}^{\ell,1}}^\bT-D \varphi\prt{\vBarHat_{n}^{\ell,\mathbf{c_2}}}^\bT \prt{H\bar{v}_{n}^{\ell,\mathbf{c_2}}-\tilde{y}_{n}^{\ell,2}}^\bT\bigg]} }_p\\
       &\lesssim \norm{K_n^{\ell,\ff}-\frac{K_{n}^{\ell,\mathbf{c_1}}+K_{n}^{\ell,\mathbf{c_2}}}{2}-\bar{K}_n^{\ell,\ff}+\bar{K}_{n}^{\ell,\fc}}_{2p}+\norm{\frac{K_{n}^{\ell,\mathbf{c_2}}-K_{n}^{\ell,\mathbf{c_1}}}{2}}_{2p}P_\ell^{-1/2}\\
      &\lesssim \norm{E_{P_\ell}[\prt{v_{n}^{\ell,\ff}-v_{n}^{\ell,\fc}-\bar{v}_{n}^{\ell,\ff}+\bar{v}_{n}^{\ell,\fc}}\prt{ \bar{v}_{n}^{\ell,\ff}}^\bT]}_{2p}+\norm{E_{P_\ell}[v_{n}^{\ell,\ff}-v_{n}^{\ell,\fc}-\bar{v}_{n}^{\ell,\ff}+\bar{v}_{n}^{\ell,\fc}]}_{2p}\\
      &+P_{\ell}^{-1/2}N_{\ell}^{-\beta/2}+P_{\ell}^{-1}.
    \end{split}
\end{equation*}
Here, the second last inequality follows from the M-Z inequality applied to the right argument in the scalar product (as it is a sample average of $P_{\ell-1}$ iid, mean-zero random matrices). The last inequality follows by Lemmas~\ref{DoubleGainCont} and~\ref{lemma:dbCovErr} and~\eqref{eq:kalmanGainCoarseDiff}.
\end{proof}

Lemma~\ref{lemma:dbDiffphi} shows that Theorem~\ref{thm:mlenkfConv} is achieved
through bounding
$\|E_{P_{\ell}}[\varphi(\vHat_{n}^{\ell,\ff})-\varphi(\vHat_{n}^{\ell,\fc})
  -\varphi(\vBarHat_{n}^{\ell,\ff})+\varphi(\vBarHat_{n}^{\ell,\fc})]\|_p$ from above.
To obtain this bound, we introduce the 
following sequence of random $d\times d$ matrices:
for $\ell\ge0$, $r,s \in \{1, \ldots, n\}$ and $j \in \{1,2\}$,
\begin{equation*}
  A_{r,s}^{\ell} =
  \begin{cases}
    \prod_{i=r}^s D\Psi^{N_\ell}_i (\vBarHat_{i}^{\ell,\fc}) (I-K_{i}^{\ell,\ff}H) & \text{if} \quad r\le s\\
    I & \text{if} \quad r>s.
  \end{cases} 
\end{equation*}
Moreover, for $j = 1,2$,
\begin{equation*}
  A_{r,s}^{\ell,j} =
  \begin{cases}
    \prod_{i=r}^s D\Psi^{N_\ell}_i (\vBarHat_{i}^{\ell,\fc_j}) (I-K_{i}^{\ell,\ff}H) & \text{if} \quad r \le s\\
    I & \text{if} \quad r>s.
  \end{cases} 
\end{equation*}
We note that $|A_{r,s}^{\ell,j}| \in \cap_{q\ge2} L^q(\Omega)$ for all index values. 
\begin{corollary} \label{corol:UpdPred}
For any $n\ge2$, $k\le n-1$ and $p\ge2$ it holds that
\begin{equation*}
\begin{split}
 &\big\| E_{P_{\ell}}\big[D \varphi( \vBarHat_{n}^{\ell,\fc})(I-K_{n}^{\ell,\ff}H) A^{\ell}_{k+1,n-1}D \Psi^{N_\ell}_k(\vBarHat_{k}^{\ell,\fc})(\vHat_{k}^{\ell,\ff}-\vBarHat_{k}^{\ell,\ff}-\vHat_{k}^{\ell,\fc}+\vBarHat_{k}^{\ell,\fc}) \big]\big\|_{p}\\
 & \lesssim \big\|E_{P_{\ell}}\big[ D \varphi( \vBarHat_{n}^{\ell,\fc}) (I-K_{n}^{\ell,\ff}H)A^{\ell}_{k,n-1} ( v_{k}^{\ell,\ff}-v_{k}^{\ell,\fc}-\bar{v}_{k}^{\ell,\ff}+\bar{v}_{k}^{\ell,\fc}) \big]\big\|_{p}\\
&\quad +\big\|E_{P_\ell}\big[(v_{k}^{\ell,\ff}-v_{k}^{\ell,\fc}-\bar{v}_{k}^{\ell,\ff}+\bar{v}_{k}^{\ell,\fc})( \bar{v}_{k}^{\ell,\ff})^\bT\big]\big\|_{2p}
  +\big\|E_{P_\ell}[ v_{k}^{\ell,\ff}-v_{k}^{\ell,\fc}-\bar{v}_{k}^{\ell,\ff}+\bar{v}_{k}^{\ell,\fc}]\big\|_{2p}\\
  &\quad +P_{\ell}^{-1/2}N_{\ell}^{-\beta/2}+P_{\ell}^{-1}.
     \end{split}
\end{equation*}
\end{corollary}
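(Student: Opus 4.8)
The plan is to follow closely the proof of Lemma~\ref{lemma:phiDoubleUpdate}, treating the compound random matrix
\[
\Phi_{k} := D\varphi(\vBarHat_{n}^{\ell,\fc})\,(I-K_{n}^{\ell,\ff}H)\,A^{\ell}_{k+1,n-1}\,D\Psi^{N_\ell}_k(\vBarHat_{k}^{\ell,\fc})
\]
as a generalized ``test functional'' in place of $D\varphi(\vBarHat_{n}^{\ell,\fc})$, and to apply the update-to-prediction double-difference identity at the intermediate time $k$ rather than at the terminal time $n$. First I would invoke the representation~\eqref{dbUpdate} from the proof of Lemma~\ref{lemma:phiDoubleUpdate}, but with $n$ replaced by $k$, to rewrite the per-particle update double-difference $\vHat_{k}^{\ell,\ff_j}-\vHat_{k}^{\ell,\fc_j}-\vBarHat_{k}^{\ell,\ff_j}+\vBarHat_{k}^{\ell,\fc_j}$ as $(I-K_{k}^{\ell,\ff}H)$ acting on the prediction double-difference $v_{k}^{\ell,\ff_j}-v_{k}^{\ell,\fc_j}-\bar v_{k}^{\ell,\ff_j}+\bar v_{k}^{\ell,\fc_j}$, plus the four Kalman-gain error terms appearing in~\eqref{dbUpdate}.

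The decisive algebraic step is the identification of the leading term. After substituting this representation and using the splitting $E_{P_\ell}[\,\cdot\,]=E_{P_{\ell-1}}[\sum_{j=1}^2\,\cdot\,]$ with $A^{\ell}$ specialized to $A^{\ell,j}$ on each coarse sub-ensemble, the term carrying $(I-K_{k}^{\ell,\ff}H)$ reads
\[
E_{P_{\ell}}\Big[ D\varphi(\vBarHat_{n}^{\ell,\fc})(I-K_{n}^{\ell,\ff}H)\,A^{\ell}_{k+1,n-1}\,D\Psi^{N_\ell}_k(\vBarHat_{k}^{\ell,\fc})(I-K_{k}^{\ell,\ff}H)\big( v_{k}^{\ell,\ff}-v_{k}^{\ell,\fc}-\bar{v}_{k}^{\ell,\ff}+\bar{v}_{k}^{\ell,\fc}\big)\Big].
\]
By the very definition $A^{\ell}_{r,s}=\prod_{i=r}^{s}D\Psi^{N_\ell}_i(\vBarHat_{i}^{\ell,\fc})(I-K_{i}^{\ell,\ff}H)$, appending the $i=k$ factor on the right telescopes $A^{\ell}_{k+1,n-1}\,D\Psi^{N_\ell}_k(\vBarHat_{k}^{\ell,\fc})(I-K_{k}^{\ell,\ff}H)=A^{\ell}_{k,n-1}$, so this leading term reassembles under $E_{P_\ell}$ into exactly the first term on the right-hand side of the claim, and is simply retained.

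It then remains to bound the four residual terms inherited from~\eqref{dbUpdate}. I would first record that $\Phi_k\in\cap_{q\ge2}L^q(\Omega)$ with norm bounded uniformly in the relevant parameters: $D\varphi(\vBarHat_{n}^{\ell,\fc})$ is controlled since $\varphi\in\bF\subset C^2_P(\bR^d,\bR)$ and $\vBarHat_{n}^{\ell,\fc}\in\cap_{r\ge2}L^r$, the factor $D\Psi^{N_\ell}_k(\vBarHat_{k}^{\ell,\fc})$ is controlled by Assumption~\ref{ass:psi2}(i), and $|A^{\ell,j}_{k+1,n-1}|\in\cap_{q\ge2}L^q(\Omega)$ as noted before the statement. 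With $\Phi_k$ playing the role of $D\varphi$, the three error terms not involving the observations are handled exactly as $\cJ_{2,\ell}$ in Lemma~\ref{lemma:phiDoubleUpdate}: H\"older's inequality together with Lemmas~\ref{lemma:MFGainCont},~\ref{lemma:MFcovErr},~\ref{lemma:distMFens} and Corollaries~\ref{covErr},~\ref{distEns} yields the contribution $P_\ell^{-1/2}N_\ell^{-\beta/2}+P_\ell^{-1}$. The observation-dependent term, involving $H\bar v_{k}^{\ell,\fc_j}-\tilde y_{k}^{\ell,j}$, is treated as $\cJ_{3,\ell}$: I would split the coarse gain as $K_{k}^{\ell,\fc_j}=\tfrac12(K_{k}^{\ell,\fc_1}+K_{k}^{\ell,\fc_2})+(-1)^{j}\tfrac12(K_{k}^{\ell,\fc_2}-K_{k}^{\ell,\fc_1})$, use the Frobenius scalar product $\Frob{\cdot}$ to move the mean-zero sample average onto the right factor and apply the M-Z inequality there, and finally invoke Lemmas~\ref{DoubleGainCont} and~\ref{lemma:dbCovErr} together with~\eqref{eq:kalmanGainCoarseDiff}. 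This produces precisely the two sample-average terms $\|E_{P_\ell}[(v_{k}^{\ell,\ff}-v_{k}^{\ell,\fc}-\bar v_{k}^{\ell,\ff}+\bar v_{k}^{\ell,\fc})(\bar v_{k}^{\ell,\ff})^\bT]\|_{2p}$ and $\|E_{P_\ell}[v_{k}^{\ell,\ff}-v_{k}^{\ell,\fc}-\bar v_{k}^{\ell,\ff}+\bar v_{k}^{\ell,\fc}]\|_{2p}$ plus the $P_\ell^{-1/2}N_\ell^{-\beta/2}+P_\ell^{-1}$ remainder, as required.

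The main obstacle I anticipate is twofold, and both parts are bookkeeping rather than conceptual: first, verifying that the long product $\Phi_k$ of $n-k$ random Jacobian-and-gain factors genuinely stays in every $L^q$ with constants independent of $\{N_\ell\},\{P_\ell\},L$ — each factor has only polynomially growing moments, so one must chain H\"older's inequality carefully (or argue inductively in the number of factors) to keep all moments finite and uniformly bounded; and second, matching the per-particle $\fc_1/\fc_2$ structure so that the telescoped factor $A^{\ell,j}_{k+1,n-1}D\Psi^{N_\ell}_k(\vBarHat_k^{\ell,\fc_j})(I-K_k^{\ell,\ff}H)$ really coincides with $A^{\ell,j}_{k,n-1}$ on each sub-ensemble before reassembling into $A^{\ell}_{k,n-1}$ under $E_{P_\ell}$. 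Once these are in place, the estimate is a direct transcription of the Lemma~\ref{lemma:phiDoubleUpdate} argument with $n\mapsto k$ and $D\varphi\mapsto\Phi_k$.
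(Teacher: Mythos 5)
Your proposal is correct and follows essentially the same route as the paper's own (sketched) proof: apply the update-to-prediction representation~\eqref{dbUpdate} at time $k$ with the compound prefactor $D\varphi(\vBarHat_{n}^{\ell,\fc_j})(I-K_{n}^{\ell,\ff}H)A^{\ell,j}_{k+1,n-1}D\Psi^{N_\ell}_k(\vBarHat_{k}^{\ell,\fc_j})$ in place of $D\varphi$, absorb the leading term via the telescoping identity $A_{k+1,n-1}^{\ell,j}D\Psi^{N_\ell}_k(\vBarHat_{k}^{\ell,\fc_j})(I-K_{k}^{\ell,\ff}H)=A_{k,n-1}^{\ell,j}$, and bound the $\cJ_{2,\ell}$- and $\cJ_{3,\ell}$-type remainders exactly as in Lemma~\ref{lemma:phiDoubleUpdate}, using the $L^q$-boundedness of the prefactor. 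The bookkeeping concerns you raise (uniform moment bounds on the product and the $\fc_1/\fc_2$ matching) are precisely the points the paper handles implicitly, so nothing essential is missing or different.
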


\begin{proof}[Sketch of proof]
  Proceeding as in the proof of Lemma~\ref{lemma:phiDoubleUpdate}, we obtain three terms that respectively
  are similar to $\cJ_{1,\ell}, \cJ_{2,\ell}$ and $\cJ_{3,\ell}$ in~\eqref{dbEnswithVarphi}, but now with the prefactor $D \varphi(\vBarHat_{n}^{\ell,\fc_j})$
  replaced by $D \varphi( \vBarHat_{n}^{\ell,\fc_j})(I-K_{n}^{\ell,\ff}H) A^{\ell,j}_{k+1,n-1}D \Psi^{N_\ell}_k(\vBarHat_{k}^{\ell,\fc_j})$. The proof is obtained through the equality
  \[
  A_{k+1,n-1}^{\ell,j}D \Psi^{N_\ell}_k(\vBarHat_{k}^{\ell,\fc_j})(I-K_{k}^{\ell,\ff} H) =A_{k,n-1}^{\ell,j},
  \]
  the boundedness of $A_{k,n-1}^{\ell,j}$ and $D \varphi( \vBarHat_{n}^{\ell,\fc_j})(I-K_{n}^{\ell,\ff}H)$,
  and by bounding the terms corresponding to $\cJ_{2,\ell}$ and $\cJ_{3,\ell}$ in this corollary
  similarly as in said lemma.
\end{proof}

\begin{corollary} \label{corol:AUpdPred}
For any $n\ge1$, $k\le s \le n-1$ and $p\ge2$
it holds that
\begin{equation}\label{eq:AUpdPred}
  \begin{split}
    &\big\| E_{P_{\ell}}\big[A_{k+1,s}^{\ell} D\Psi^{N_\ell}_k(\vBarHat_{k}^{\ell,\fc})(\vHat_{k}^{\ell,\ff}-\vBarHat_{k}^{\ell,\ff}-\vHat_{k}^{\ell,\fc}+\vBarHat_{k}^{\ell,\fc}) \big]\big\|_{p}\\
    & \lesssim \big\|E_{P_{\ell}}\big[A_{k,s}^{\ell} ( v_{k}^{\ell,\ff}-v_{k}^{\ell,\fc}-\bar{v}_{k}^{\ell,\ff}+\bar{v}_{k}^{\ell,\fc}) \big]\big\|_{p}
    +\big\|E_{P_\ell}\big[(v_{k}^{\ell,\ff}-v_{k}^{\ell,\fc}-\bar{v}_{k}^{\ell,\ff}+\bar{v}_{k}^{\ell,\fc})(\bar{v}_{k}^{\ell,\ff})^\bT\big] \big\|_{2p}\\
    &\quad +\big\|E_{P_\ell}[v_{k}^{\ell,\ff}-v_{k}^{\ell,\fc}-\bar{v}_{k}^{\ell,\ff}+\bar{v}_{k}^{\ell,\fc}\big]\big\|_{2p}+P_{\ell}^{-1/2}N_{\ell}^{-\beta/2}+P_{\ell}^{-1}.
  \end{split}
\end{equation}	
\end{corollary}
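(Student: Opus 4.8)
The plan is to run the proof of Lemma~\ref{lemma:phiDoubleUpdate} essentially verbatim, the only structural change being that the fixed prefactor $D\varphi(\vBarHat_n^{\ell,\fc_j})$ there is replaced by the random matrix $A_{k+1,s}^{\ell,j}D\Psi_k^{N_\ell}(\vBarHat_k^{\ell,\fc_j})$ here. First I would split the full $P_\ell$-ensemble average on the left-hand side of~\eqref{eq:AUpdPred} into its two coarse halves, rewriting it as a sum over $j\in\{1,2\}$ of $E_{P_{\ell-1}}$-averages whose $j$-th summand carries the prefactor $A_{k+1,s}^{\ell,j}D\Psi_k^{N_\ell}(\vBarHat_k^{\ell,\fc_j})$, exactly as in the first line of~\eqref{dbEnswithVarphi}. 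Into each summand I would then insert the update representation~\eqref{dbUpdate}, read at time $k$, for the double difference $\vHat_k^{\ell,\ff_j}-\vHat_k^{\ell,\fc_j}-\vBarHat_k^{\ell,\ff_j}+\vBarHat_k^{\ell,\fc_j}$.

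The leading term of~\eqref{dbUpdate} is $(I-K_k^{\ell,\ff}H)(v_k^{\ell,\ff_j}-v_k^{\ell,\fc_j}-\bar v_k^{\ell,\ff_j}+\bar v_k^{\ell,\fc_j})$, and here the telescoping identity
\[
A_{k+1,s}^{\ell,j}\,D\Psi_k^{N_\ell}(\vBarHat_k^{\ell,\fc_j})\,(I-K_k^{\ell,\ff}H)=A_{k,s}^{\ell,j}
\]
(valid also in the edge case $k=s$, where $A_{k+1,s}^{\ell,j}=I$) collapses the prefactor times the leading term into $A_{k,s}^{\ell,j}(v_k^{\ell,\ff_j}-v_k^{\ell,\fc_j}-\bar v_k^{\ell,\ff_j}+\bar v_k^{\ell,\fc_j})$; recombining the two halves recovers precisely the first right-hand term $\|E_{P_\ell}[A_{k,s}^\ell(v_k^{\ell,\ff}-v_k^{\ell,\fc}-\bar v_k^{\ell,\ff}+\bar v_k^{\ell,\fc})]\|_p$ of~\eqref{eq:AUpdPred}. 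This term is left intact, since it is the quantity to be iterated on, and all remaining error is pushed onto the other four terms of~\eqref{dbUpdate}.

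Those four terms I would bound exactly as $\cJ_{2,\ell}$ and $\cJ_{3,\ell}$ in Lemma~\ref{lemma:phiDoubleUpdate}, the one new input being that the prefactor $A_{k+1,s}^{\ell,j}D\Psi_k^{N_\ell}(\vBarHat_k^{\ell,\fc_j})$ lies in $\cap_{q\ge2}L^q(\Omega)$. This follows from the noted membership $|A_{r,s}^{\ell,j}|\in\cap_{q\ge2}L^q(\Omega)$, from Assumption~\ref{ass:psi2}(i) (which gives $\|D\Psi_k^{N_\ell}(\vBarHat_k^{\ell,\fc_j})\|_q\lesssim 1+\|\vBarHat_k^{\ell,\fc_j}\|_q<\infty$), and from H\"older's inequality. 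With this integrability, the two single-gain-difference terms and the $(\bar K_k^{\ell,\ff}-\bar K_k^{\ell,\fc})$-term are controlled by H\"older together with Lemmas~\ref{lemma:MFGainCont},~\ref{lemma:MFcovErr},~\ref{lemma:distMFens} and Corollaries~\ref{covErr},~\ref{distEns}, yielding $P_\ell^{-1/2}N_\ell^{-\beta/2}+P_\ell^{-1}$. The remaining observation term is split, as in $\cJ_{3,\ell}$, into a symmetric Kalman-gain double difference bounded in norm by Lemmas~\ref{DoubleGainCont} and~\ref{lemma:dbCovErr} (the source of the two $\|E_{P_\ell}[\,\cdots\,]\|_{2p}$ terms in~\eqref{eq:AUpdPred}), and an antisymmetric part $(K_k^{\ell,\fc_2}-K_k^{\ell,\fc_1})/2$ of order $P_\ell^{-1/2}$, against which I would apply the Marcinkiewicz--Zygmund (M-Z) inequality via the Frobenius scalar product to gain a further $P_\ell^{-1/2}$.

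The hard part will be precisely this last M-Z step. Unlike in Lemma~\ref{lemma:phiDoubleUpdate}, where the averaged summands $D\varphi(\vBarHat_{n}^{\ell,\fc_j})^\bT(H\bar v_{n}^{\ell,\fc_j}-\tilde y_n^{\ell,j})^\bT$ are genuinely iid and mean-zero across particles, the new prefactor $A_{k+1,s}^{\ell,j}$ carries the ensemble-wide fine gains $K_{k+1}^{\ell,\ff},\ldots,K_s^{\ell,\ff}$, so the summands are no longer independent and the clean M-Z gain is not immediate. I would circumvent this by replacing, inside the antisymmetric term only, each random gain $K_i^{\ell,\ff}$ in $A_{k+1,s}^{\ell,j}$ by its deterministic mean-field counterpart $\bar K_i^{\ell,\ff}$: the replacement error is $O(P_\ell^{-1/2})$ by Lemma~\ref{lem:EnKFGainCont} together with Corollaries~\ref{covErr} and~\ref{distEns}, and against the already $O(P_\ell^{-1/2})$ gain factor it contributes only $O(P_\ell^{-1})$, while the resulting deterministic-gain matrix depends solely on particle-specific mean-field trajectories and thus renders the summands iid and mean-zero, so that M-Z legitimately yields the extra $P_\ell^{-1/2}$. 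Collecting the symmetric and antisymmetric contributions then reproduces the two $\|E_{P_\ell}[\,\cdots\,]\|_{2p}$ terms together with $P_\ell^{-1/2}N_\ell^{-\beta/2}+P_\ell^{-1}$, completing~\eqref{eq:AUpdPred}.
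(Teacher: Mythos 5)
Your proposal is correct and follows the paper's own route: split the $E_{P_\ell}$-average into its two coarse halves, insert the update representation~\eqref{dbUpdate} at time $k$, collapse the leading term through the telescoping identity $A_{k+1,s}^{\ell,j}D\Psi^{N_\ell}_k(\vBarHat_{k}^{\ell,\fc_j})(I-K_{k}^{\ell,\ff}H)=A_{k,s}^{\ell,j}$, and bound the remaining terms of~\eqref{dbUpdate} as $\cJ_{2,\ell}$- and $\cJ_{3,\ell}$-analogues using $|A_{r,s}^{\ell,j}|\in\cap_{q\ge2}L^q(\Omega)$ and Assumption~\ref{ass:psi2}(i). The one genuine difference lies in the $\cJ_{3,\ell}$-step: the paper's sketch splits the $d\times d$ prefactor $A_{k+1,s}^{\ell,j}D\Psi^{N_\ell}_k(\vBarHat_{k}^{\ell,\fc_j})$ into its $d$ rows and declares the resulting terms ``similar to the $\cJ_{3,\ell}$-term'' of Corollary~\ref{corol:UpdPred}, passing silently over exactly the point you flag --- those rows contain the ensemble-wide gains $K_{k+1}^{\ell,\ff},\ldots,K_{s}^{\ell,\ff}$, so the right argument of the Frobenius scalar product is no longer a sample average of independent, mean-zero matrices, and the M-Z inequality does not apply verbatim as it does in Lemma~\ref{lemma:phiDoubleUpdate}. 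Your repair --- substituting $\bar K_{i}^{\ell,\ff}$ for $K_{i}^{\ell,\ff}$ inside the antisymmetric term only, at a cost of order $P_\ell^{-1/2}$ (Lemma~\ref{lem:EnKFGainCont} with Corollaries~\ref{covErr} and~\ref{distEns}), which against the $O(P_\ell^{-1/2})$ factor $\norm{(K_{k}^{\ell,\fc_2}-K_{k}^{\ell,\fc_1})/2}_{2p}$ contributes only $O(P_\ell^{-1})$, while the mean-field-gain prefactor depends solely on particle-specific iid quantities so that M-Z legitimately yields the extra $P_\ell^{-1/2}$ --- is sound, and it supplies precisely the detail needed to make the paper's ``similarly'' rigorous. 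In short: same decomposition and same supporting lemmas, but your write-up is more complete than the paper's sketch at the single delicate step.
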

\begin{proof}[Sketch of proof] The statement of this corollary is similar to Corollary~\ref{corol:UpdPred},
  but with the $1\times d$ prefactor $D \varphi(\vBarHat_{n}^{\ell,\fc_j})(I-K_{n}^{\ell,\ff}H)
  A^{\ell,j}_{k+1,n-1}$
  replaced by $A_{k+1,s}^{\ell, j} D\Psi^{N_\ell}_k(\vBarHat_{k}^{\ell,\fc_j})$.
  As the latter factor also is bounded (it is an element of $\cap_{\tilde r\ge 2} L^{r}(\Omega, \bR^{d\times d})$),
  the proof follows by a similar argument as in Corollary~\ref{corol:UpdPred}.
  However, since the prefactor in this case is a $d \times d$ matrix
  rather than a vector, the term in~\eqref{eq:AUpdPred} that corresponds to
  $\cJ_{3,\ell}$ in the proof of Lemma~\ref{lemma:phiDoubleUpdate} becomes
  \begin{equation*}
    \begin{split}
      &\Big\|E_{P_{\ell-1}}\Big[\sum_{j=1}^{2} A_{k+1,s}^{\ell,j}D \Psi^{N_\ell}_k(\vBarHat_{k}^{\ell,\fc_j}) (K_{k}^{\ell,\ff}-K_{k}^{\ell,\fc_j}-\bar{K}_{k}^{\ell,\ff}+\bar{K}_{k}^{\ell,\fc})(H\bar{v}_{k}^{\ell,\fc_j}-\tilde{y}_{k}^{\ell,j})\Big]\Big\|_p\\
      &\leq \sum_{i=1}^{d} \Big\| E_{P_{\ell-1}}\Big[\sum_{j=1}^{2} \big(A_{k+1,s}^{\ell,j}D \Psi^{N_\ell}_k(\vBarHat_{k}^{\ell,\fc_j})\big)_i (K_{k}^{\ell,\ff}-K_{k}^{\ell,\fc_j}-\bar{K}_{k}^{\ell,\ff}+\bar{K}_{k}^{\ell,\fc})(H\bar{v}_{k}^{\ell,\fc_j}-\tilde{y}_{k}^{\ell,j})\Big] \Big\|_p.
\end{split}
  \end{equation*}
  Here $\big(A_{k+1,s}^{\ell,j}D \Psi^{N_\ell}_k(\vBarHat_{k}^{\ell,\fc_j})\big)_i$ denotes the $i$-th row vector of the matrix. Hence, we have $d$ terms to bound
  that are similar to the $\cJ_{3,\ell}$-term in Corollary~\ref{corol:UpdPred}.
\end{proof}

\begin{corollary} \label{corol:UpdPredvTrans}
For any $n\ge1$, $k\le s \le n-1$, $r \le n$ and $p\ge2$
it holds that
\begin{equation}\label{eq:UpdPredvTrans}
  \begin{split}
    &\big\| E_{P_{\ell}}\big[A_{k+1,s}^{\ell} D\Psi^{N_\ell}_k(\vBarHat_{k}^{\ell,\fc})(\vHat_{k}^{\ell,\ff}-\vBarHat_{k}^{\ell,\ff}-\vHat_{k}^{\ell,\fc}+\vBarHat_{k}^{\ell,\fc}) (\vBar_{r}^{\ell,\ff})^\bT \big]\big\|_{p}\\
    &\lesssim \big\|E_{P_{\ell}}\big[A_{k,s}^{\ell} ( v_{k}^{\ell,\ff}-v_{k}^{\ell,\fc}-\bar{v}_{k}^{\ell,\ff}+\bar{v}_{k}^{\ell,\fc})(\vBar_{r}^{\ell,\ff})^\bT \big]\big\|_{p}
    +\big\|E_{P_\ell}[v_{k}^{\ell,\ff}-v_{k}^{\ell,\fc}-\bar{v}_{k}^{\ell,\ff}+\bar{v}_{k}^{\ell,\fc}\big]\big\|_{2p}\\
    &\quad +\big\|E_{P_\ell}\big[(v_{k}^{\ell,\ff}-v_{k}^{\ell,\fc}-\bar{v}_{k}^{\ell,\ff}+\bar{v}_{k}^{\ell,\fc})(\bar{v}_{k}^{\ell,\ff})^\bT\big] \big\|_{2p}
     +P_{\ell}^{-1/2}N_{\ell}^{-\beta/2}+P_{\ell}^{-1}.
  \end{split}
\end{equation}	
\end{corollary}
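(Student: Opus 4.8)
The plan is to follow the proof of Corollary~\ref{corol:AUpdPred} essentially verbatim; the sole new feature is the trailing factor $(\vBar_{r}^{\ell,\ff})^\bT$, and since $\vBar_{r}^{\ell,\ff}\in\cap_{q\ge2}L^q(\Omega,\bR^d)$ uniformly in $N_\ell$ (cf.~\eqref{eq:inductionVBarBound}), this factor will be retained inside the single recursive term on the right-hand side of~\eqref{eq:UpdPredvTrans} and peeled off by H{\"o}lder's inequality in every error term, so that the rates are unaffected.

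First I would rewrite the $E_{P_\ell}$-average as a sum over $j=1,2$ of $E_{P_{\ell-1}}$-averages (with a harmless factor $\tfrac12$) via the $\fc_j\leftrightarrow\ff_j$ correspondence, and then substitute the update representation~\eqref{dbUpdate} for $\vHat_{k}^{\ell,\ff_j}-\vHat_{k}^{\ell,\fc_j}-\vBarHat_{k}^{\ell,\ff_j}+\vBarHat_{k}^{\ell,\fc_j}$. The leading summand of~\eqref{dbUpdate}, namely $(I-K_k^{\ell,\ff}H)(v_{k}^{\ell,\ff_j}-v_{k}^{\ell,\fc_j}-\bar{v}_{k}^{\ell,\ff_j}+\bar{v}_{k}^{\ell,\fc_j})$, merges with the prefactor $A_{k+1,s}^{\ell,j}D\Psi^{N_\ell}_k(\vBarHat_{k}^{\ell,\fc_j})$ through the identity $A_{k+1,s}^{\ell,j}D\Psi^{N_\ell}_k(\vBarHat_{k}^{\ell,\fc_j})(I-K_k^{\ell,\ff}H)=A_{k,s}^{\ell,j}$; after reattaching $(\vBar_{r}^{\ell,\ff})^\bT$ and recombining the $j$-sums into one $E_{P_\ell}$-average, this produces exactly the first right-hand-side term of~\eqref{eq:UpdPredvTrans}. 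The remaining summands of~\eqref{dbUpdate} generate analogues of the terms $\cJ_{2,\ell}$ and $\cJ_{3,\ell}$ of~\eqref{dbEnswithVarphi}, now carrying both the bounded matrix prefactor $A_{k+1,s}^{\ell,j}D\Psi^{N_\ell}_k(\vBarHat_{k}^{\ell,\fc_j})\in\cap_{q\ge2}L^q(\Omega,\bR^{d\times d})$ and the bounded factor $(\vBar_{r}^{\ell,\ff})^\bT$.

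For the $\cJ_{2,\ell}$-analogue I would bound the Kalman-gain differences by Lemmas~\ref{lemma:MFGainCont},~\ref{lemma:MFcovErr} and~\ref{lemma:distMFens} and Corollaries~\ref{covErr} and~\ref{distEns}, and peel off all the (uniformly $L^q$-bounded) remaining factors -- including $(\vBar_{r}^{\ell,\ff})^\bT$ -- by H{\"o}lder to obtain $P_\ell^{-1/2}N_\ell^{-\beta/2}+P_\ell^{-1}$. The hard part will be the $\cJ_{3,\ell}$-analogue,
\[
\Big\|E_{P_{\ell-1}}\Big[\sum_{j=1}^2 A_{k+1,s}^{\ell,j}D\Psi^{N_\ell}_k(\vBarHat_{k}^{\ell,\fc_j})\big(K_{k}^{\ell,\ff}-K_{k}^{\ell,\fc_j}-\bar{K}_{k}^{\ell,\ff}+\bar{K}_{k}^{\ell,\fc}\big)\big(H\bar{v}_{k}^{\ell,\fc_j}-\tilde{y}_{k}^{\ell,j}\big)(\vBar_{r}^{\ell,\ff})^\bT\Big]\Big\|_p,
\]
where the trailing object $\big(H\bar{v}_{k}^{\ell,\fc_j}-\tilde{y}_{k}^{\ell,j}\big)(\vBar_{r}^{\ell,\ff})^\bT$ is now a rank-one $\dimObs\times d$ matrix. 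Since the prefactor is a $d\times d$ matrix, I would decompose row by row as in Corollary~\ref{corol:AUpdPred}, reducing to $d$ terms each shaped like $\cJ_{3,\ell}$. In each such term I would split $K_{k}^{\ell,\fc_1}=\tfrac12(K_{k}^{\ell,\fc_1}+K_{k}^{\ell,\fc_2})+\tfrac12(K_{k}^{\ell,\fc_1}-K_{k}^{\ell,\fc_2})$: the symmetric part is the Kalman-gain double difference, bounded via Lemmas~\ref{DoubleGainCont} and~\ref{lemma:dbCovErr} to yield the second and third right-hand-side terms of~\eqref{eq:UpdPredvTrans} plus the residual rates, while the antisymmetric part is extracted with the Frobenius scalar product and controlled by the M-Z inequality applied to the mean-zero $E_{P_{\ell-1}}$-average of the $j$-differences; together with $\norm{K_{k}^{\ell,\fc_1}-K_{k}^{\ell,\fc_2}}_{2p}\lesssim P_\ell^{-1/2}$ from~\eqref{eq:kalmanGainCoarseDiff} this contributes $P_\ell^{-1}$. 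The only point requiring genuine care is checking that appending the per-particle factor $(\vBar_{r}^{\ell,\ff_j})^\bT$ leaves both the H{\"o}lder boundedness and, crucially, the mean-zero structure needed for the M-Z step intact, so that no additional surviving term appears and the convergence rate is unchanged.
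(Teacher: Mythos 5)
Your proposal is correct and follows essentially the same route as the paper's own sketch: both reduce to the argument of Corollaries~\ref{corol:UpdPred}/\ref{corol:AUpdPred}, identify the $\cJ_{3,\ell}$-type term as the only genuine new difficulty introduced by the postfactor $(\vBar_{r}^{\ell,\ff})^\bT$, and dispose of it by reducing to $d$ scalar-type terms to which the Kalman-gain splitting, Frobenius-pairing and M-Z argument apply unchanged, noting that $(\vBar_{r}^{\ell,\ff_j})^\bT$ is a per-particle, uniformly $L^q$-bounded mean-field quantity so that neither the H{\"o}lder bounds nor the mean-zero structure is disturbed. The only cosmetic difference is bookkeeping: the paper extracts the $d$ scalar components $(\vBar_{r}^{\ell,\ff_j})^\bT_i$ of the postfactor, while you decompose the $d\times d$ prefactor into rows; the two reductions are equivalent.
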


\begin{proof}[Sketch of proof]
  The statement of this corollary is similar to Corollary~\ref{corol:UpdPred},
  but here with the additional $1\times d$ postfactor $(\vBar_{r}^{\ell,\ff})^\bT$.
  The only technicality this introduces is in bounding the term in~\eqref{eq:UpdPredvTrans}
  that corresponds to $\cJ_{3,\ell}$ in the proof of Lemma~\ref{lemma:phiDoubleUpdate}. That is,
  \begin{equation*}
    \begin{split}
      &\Big\|E_{P_{\ell-1}}\Big[\sum_{j=1}^{2} A_{k+1,s}^{\ell,j}D \Psi^{N_\ell}_k(\vBarHat_{k}^{\ell,\fc_j}) (K_{k}^{\ell,\ff}-K_{k}^{\ell,\fc_j}-\bar{K}_{k}^{\ell,\ff}+\bar{K}_{k}^{\ell,\fc})(H\bar{v}_{k}^{\ell,\fc_j}-\tilde{y}_{k}^{\ell,j})(\vBar_{r}^{\ell,\ff})^\bT\Big]
      \Big\|_p\\
      &\leq \sum_{i=1}^{d} \Big\| E_{P_{\ell-1}}\Big[\sum_{j=1}^{2} A_{k+1,s}^{\ell,j}D \Psi^{N_\ell}_k(\vBarHat_{k}^{\ell,\fc_j}) (K_{k}^{\ell,\ff}-K_{k}^{\ell,\fc_j}-\bar{K}_{k}^{\ell,\ff}+\bar{K}_{k}^{\ell,\fc})(H\bar{v}_{k}^{\ell,\fc_j}-\tilde{y}_{k}^{\ell,j}) (\vBar_{r}^{\ell,\ff_j})^\bT_i\Big] \Big\|_p.
\end{split}
  \end{equation*}
  Here $(\vBar_{r}^{\ell,\ff_j})^\bT_i$ denotes the $i$-th component of the 
  row vector. We thus have $d$ terms to bound which
  are similar to the $\cJ_{3,\ell}$-term in
  Corollary~\ref{corol:UpdPred}.
\end{proof}

\begin{lemma}\label{lemma:Induction}
For any $n\ge1$, $k\le s \le n-1$, $r\le n$ and $p\ge2$ it holds that
\begin{equation}\label{eq:induction1}
\begin{split}
  &\big\| E_{P_{\ell}}\big[D \varphi( \vBarHat_{n}^{\ell,\fc})(I-K_{n}^{\ell,\ff}H) A_{k+1,n-1}^{\ell}( v_{k+1}^{\ell,\ff_j}-v_{k+1}^{\ell,\fc}-\bar{v}_{k+1}^{\ell,\ff}+\bar{v}_{k+1}^{\ell,\fc})  \big]\big\|_{p}\\
  & \lesssim
  \big\| E_{P_{\ell}}\big[ D \varphi( \vBarHat_{n}^{\ell,\fc})(I-K_{n}^{\ell,\ff}H) A_{k,n-1}^{\ell}( v_{k}^{\ell,\ff}-v_{k}^{\ell,\fc}-\bar{v}_{k}^{\ell,\ff}+\bar{v}_{k}^{\ell,\fc})  \big]\big\|_{2p}\\
  &\quad +\big\|E_{P_\ell}\big[(v_{k}^{\ell,\ff}-v_{k}^{\ell,\fc}-\bar{v}_{k}^{\ell,\ff}+\bar{v}_{k}^{\ell,\fc})( \bar{v}_{k}^{\ell,\ff})^\bT\big]\big\|_{2p}+\big\| E_{P_\ell}\big[ v_{k}^{\ell,\ff}-v_{k}^{\ell,\fc}-\bar{v}_{k}^{\ell,\ff}+\bar{v}_{k}^{\ell,\fc} \big] \big\|_{2p} \\
  &\quad  + N_{\ell}^{-\beta/2}P_\ell^{-1/2}+P_\ell^{-1},
\end{split}
\end{equation}
\begin{equation}\label{eq:induction2}
  \begin{split}
    &\big\|E_{P_\ell}\big[ A_{k+1,s}^{\ell}(v_{k+1}^{\ell,\ff}-v_{k+1}^{\ell,\fc}-\bar{v}_{k+1}^{\ell,\ff}+\bar{v}_{k+1}^{\ell,\fc}) \big]\big\|_{p}\\
    &
    \lesssim
    \big\|E_{P_\ell}\big[ A_{k,s}^{\ell}(v_{k}^{\ell,\ff}-v_{k}^{\ell,\fc}-\bar{v}_{k}^{\ell,\ff}+\bar{v}_{k}^{\ell,\fc}) \big]\big\|_{p}
    +\big\|E_{P_\ell}[v_{k}^{\ell,\ff}-v_{k}^{\ell,\fc}-\bar{v}_{k}^{\ell,\ff}+\bar{v}_{k}^{\ell,\fc}\big]\big\|_{2p}\\
    &\quad +\big\|E_{P_\ell}\big[(v_{k}^{\ell,\ff}-v_{k}^{\ell,\fc}-\bar{v}_{k}^{\ell,\ff}+\bar{v}_{k}^{\ell,\fc})(\bar{v}_{k}^{\ell,\ff})^\bT\big] \big\|_{2p}
    +P_{\ell}^{-1/2}N_{\ell}^{-\beta/2}+P_{\ell}^{-1},
\end{split}
\end{equation}
and
\begin{equation}\label{eq:induction3}
  \begin{split}
    &\big\|E_{P_\ell}\big[ A_{k+1,s}^{\ell}(v_{k+1}^{\ell,\ff}-v_{k+1}^{\ell,\fc}-\bar{v}_{k+1}^{\ell,\ff}+\bar{v}_{k+1}^{\ell,\fc})( \bar{v}_{r}^{\ell,\ff})^\bT \big]\big\|_{p}\\
    &
    \lesssim
    \big\|E_{P_\ell}\big[ A_{k,s}^{\ell}(v_{k}^{\ell,\ff}-v_{k}^{\ell,\fc}-\bar{v}_{k}^{\ell,\ff}+\bar{v}_{k}^{\ell,\fc})( \bar{v}_{r}^{\ell,\ff})^\bT \big]\big\|_{p}
    +\big\|E_{P_\ell}[v_{k}^{\ell,\ff}-v_{k}^{\ell,\fc}-\bar{v}_{k}^{\ell,\ff}+\bar{v}_{k}^{\ell,\fc}\big]\big\|_{2p}\\
    &\quad +\big\|E_{P_\ell}\big[(v_{k}^{\ell,\ff}-v_{k}^{\ell,\fc}-\bar{v}_{k}^{\ell,\ff}+\bar{v}_{k}^{\ell,\fc})(\bar{v}_{k}^{\ell,\ff})^\bT\big] \big\|_{2p}
    +P_{\ell}^{-1/2}N_{\ell}^{-\beta/2}+P_{\ell}^{-1}.
\end{split}
\end{equation}
\end{lemma}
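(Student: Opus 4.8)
All three estimates \eqref{eq:induction1}, \eqref{eq:induction2} and \eqref{eq:induction3} share the same architecture, differing only in the bounded pre-/post-factor multiplying the prediction double difference and in which of Corollaries~\ref{corol:UpdPred}, \ref{corol:AUpdPred} and \ref{corol:UpdPredvTrans} is invoked. The plan is to first convert the \emph{prediction} double difference at time $k+1$ into the \emph{update} double difference at time $k$ by a first-order Taylor expansion of the dynamics, and then to feed the resulting leading term into the matching corollary, which passes from the update double difference at time $k$ to the prediction double difference at time $k$.

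For the dynamics step I write $v_{k+1}^{\ell,\ff}=\Psi^{N_\ell}_k(\vHat_k^{\ell,\ff})$, $\vBar_{k+1}^{\ell,\ff}=\Psi^{N_\ell}_k(\vBarHat_k^{\ell,\ff})$, $v_{k+1}^{\ell,\fc}=\Psi^{N_{\ell-1}}_k(\vHat_k^{\ell,\fc})$ and $\vBar_{k+1}^{\ell,\fc}=\Psi^{N_{\ell-1}}_k(\vBarHat_k^{\ell,\fc})$, and expand $\Psi^{N_\ell}_k$ about $\vBarHat_k^{\ell,\ff}$ and $\Psi^{N_{\ell-1}}_k$ about $\vBarHat_k^{\ell,\fc}$ (particle-wise, so that the $\fc_1/\fc_2$ matching of the proof of Lemma~\ref{lemma:phiDoubleUpdate} is respected). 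Adding and subtracting $D\Psi^{N_\ell}_k(\vBarHat_k^{\ell,\fc})$ applied to the two update increments, this yields
\[
v_{k+1}^{\ell,\ff}-v_{k+1}^{\ell,\fc}-\vBar_{k+1}^{\ell,\ff}+\vBar_{k+1}^{\ell,\fc} = D\Psi^{N_\ell}_k(\vBarHat_k^{\ell,\fc})\big(\vHat_k^{\ell,\ff}-\vHat_k^{\ell,\fc}-\vBarHat_k^{\ell,\ff}+\vBarHat_k^{\ell,\fc}\big)+T_1+T_2+T_3,
\]
where $T_1=\big(D\Psi^{N_\ell}_k(\vBarHat_k^{\ell,\ff})-D\Psi^{N_\ell}_k(\vBarHat_k^{\ell,\fc})\big)(\vHat_k^{\ell,\ff}-\vBarHat_k^{\ell,\ff})$ is a Jacobian-argument mismatch, $T_2=\big(D\Psi^{N_\ell}_k(\vBarHat_k^{\ell,\fc})-D\Psi^{N_{\ell-1}}_k(\vBarHat_k^{\ell,\fc})\big)(\vHat_k^{\ell,\fc}-\vBarHat_k^{\ell,\fc})$ is a resolution mismatch, and $T_3$ collects the two second-order Taylor remainders.

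Each $T_i$ is of the advertised order $N_\ell^{-\beta/2}P_\ell^{-1/2}+P_\ell^{-1}$. Writing the Jacobian increment in $T_1$ as an integral of the Hessian and applying Assumption~\ref{ass:psi2}(ii) (whose moment bound is finite along the segment, since both mean-field particles have all moments) together with H\"older's inequality gives $\norm{T_1}_q\lesssim\norm{\vBarHat_k^{\ell,\ff}-\vBarHat_k^{\ell,\fc}}_{3q}\norm{\vHat_k^{\ell,\ff}-\vBarHat_k^{\ell,\ff}}_{3q}\lesssim N_\ell^{-\beta/2}P_\ell^{-1/2}$ by Lemma~\ref{lemma:distMFens} and Corollary~\ref{distEns}; Assumption~\ref{ass:psi2}(iii) for $|\kappa|_1=1$, the exponential growth $N_{\ell-1}^{-\beta/2}\eqsim N_\ell^{-\beta/2}$, and Corollary~\ref{distEns} give $\norm{T_2}_q\lesssim N_\ell^{-\beta/2}P_\ell^{-1/2}$; and the Hessian bound with Corollary~\ref{distEns} gives $\norm{T_3}_q\lesssim P_\ell^{-1}$. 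Since the prefactors $D\varphi(\vBarHat_n^{\ell,\fc})(I-K_n^{\ell,\ff}H)A_{k+1,n-1}^\ell$ and $A_{k+1,s}^\ell$ and the postfactor $(\vBar_r^{\ell,\ff})^\bT$ all lie in $\cap_{q\ge2}L^q(\Omega)$, H\"older's inequality and Jensen's inequality for the sample average $E_{P_\ell}$ transfer these bounds, so all $T_i$-contributions are $\lesssim N_\ell^{-\beta/2}P_\ell^{-1/2}+P_\ell^{-1}$.

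It then remains to bound the leading term, which, with the appropriate pre-/post-factor attached, is verbatim the left-hand side of the matching corollary: \eqref{eq:induction1} closes with Corollary~\ref{corol:UpdPred}, \eqref{eq:induction2} with Corollary~\ref{corol:AUpdPred}, and \eqref{eq:induction3} with Corollary~\ref{corol:UpdPredvTrans}, each of which returns the stated time-$k$ leading term (with $A_{k,\cdot}^\ell$ replacing $A_{k+1,\cdot}^\ell D\Psi^{N_\ell}_k(\vBarHat_k^{\ell,\fc})$ via the product identity internal to those corollaries) plus contributions already of order $N_\ell^{-\beta/2}P_\ell^{-1/2}+P_\ell^{-1}$; in \eqref{eq:induction1} one relaxes the corollary's $L^p$ bound to $L^{2p}$ using $\norm{\cdot}_p\le\norm{\cdot}_{2p}$. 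The main obstacle is the bookkeeping of the dynamics step: one must extract the leading term so that it coincides \emph{exactly} with the corollary's input, and manage the H\"older exponents in the Hessian-based remainder estimates (which is what forces the index $2p$ onto the right-hand sides and makes the norm budget double under the backward iteration over $k$ used downstream).
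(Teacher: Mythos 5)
Your proof is correct and follows essentially the same route as the paper: a mean-value/Taylor expansion of the coupled dynamics that isolates the leading term $D\Psi^{N_\ell}_k(\vBarHat_k^{\ell,\fc})(\vHat_k^{\ell,\ff}-\vHat_k^{\ell,\fc}-\vBarHat_k^{\ell,\ff}+\vBarHat_k^{\ell,\fc})$, remainder bounds of order $N_\ell^{-\beta/2}P_\ell^{-1/2}+P_\ell^{-1}$ via Assumption~\ref{ass:psi2}, Lemma~\ref{lemma:distMFens} and Corollary~\ref{distEns}, and closure of each estimate by Corollaries~\ref{corol:UpdPred}, \ref{corol:AUpdPred} and \ref{corol:UpdPredvTrans}, respectively. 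Your grouping of the remainder (Jacobian-argument mismatch written as a Jacobian increment, and the resolution mismatch attached to the coarse rather than the fine update difference) differs from the paper's expansion~\eqref{eq:meanValueTheorem} only cosmetically and yields the same orders.
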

\begin{proof}
The mean-value theorem yields the expansion
\begin{equation}\label{eq:meanValueTheorem}
    \begin{split}
        & v_{k+1}^{\ell,\ff}-v_{k+1}^{\ell,\fc}-\bar{v}_{k+1}^{\ell,\ff}+\bar{v}_{k+1}^{\ell,\fc}
      = \Psi^{N_\ell}_k(\vHat_{k}^{\ell,\ff})- \Psi^{N_{\ell-1}}_k(\vHat_{k}^{\ell,\fc})
      - \Psi^{N_\ell}_k(\vBarHat_{k}^{\ell,\ff})+ \Psi^{N_{\ell-1}}_k(\vBarHat_{k}^{\ell,\fc})\\
      &= D \Psi^{N_\ell}_k (\vBarHat_{k}^{\ell,\fc})(\vHat_{k}^{\ell,\ff}-\vBarHat_{k}^{\ell,\ff}-\vHat_{k}^{\ell,\fc}+\vBarHat_{k}^{\ell,\fc})
      +(\vHat_{k}^{\ell,\ff}-\vBarHat_{k}^{\ell,\ff})^\bT D^2 \Psi^{N_\ell}_k (\theta_k)(\vBarHat_{k}^{\ell,\ff}- \vBarHat_{k}^{\ell,\fc})\\
       &+\prt{D \Psi^{N_\ell}_k (\vBarHat_{k}^{\ell,\fc})-D \Psi^{N_{\ell-1}}_k (\vBarHat_{k}^{\ell,\fc})}(\vHat_{k}^{\ell,\ff}-\vBarHat_{k}^{\ell,\ff})
      +\frac{1}{2}(\vHat_{k}^{\ell,\ff}-\vBarHat_{k}^{\ell,\ff})^\bT D^2 \Psi^{N_\ell}_k (\tilde{\theta}_{k}) (\vHat_{k}^{\ell,\ff}-\vBarHat_{k}^{\ell,\ff})\\
      &-\frac{1}{2}(\vHat_{k}^{\ell,\fc}-\vBarHat_{k}^{\ell,\fc})^\bT D^2 \Psi^{N_\ell}_k (\check{\theta}_{k}) (\vHat_{k}^{\ell,\fc}-\vBarHat_{k}^{\ell,\fc}),
    \end{split}
\end{equation}
where $\theta_k \in \mathrm{Conv}(\vBarHat_{k}^{\ell,\ff}, \vBarHat_{k}^{\ell,\fc})$,
$\tilde \theta_k \in \mathrm{Conv}(\vHat_{k}^{\ell,\ff}, \vBarHat_{k}^{\ell,\ff})$,
$\check \theta_k \in \mathrm{Conv}(\vHat_{k}^{\ell,\fc}, \vHat_{k}^{\ell,\fc})$
and $\mathrm{Conv}(x, \check x):= \{ xt+ (1-t)\check x \mid t \in [0,1]\}$ for $x,\check x \in \bR^d$.
The expansion, Assumption~\ref{ass:psi2} and Corollary~\ref{corol:UpdPred}
yield~\eqref{eq:induction1}. And, similarly, Corollaries~\ref{corol:AUpdPred} and~\ref{corol:UpdPredvTrans}
respectively yield~\eqref{eq:induction2} and~\eqref{eq:induction3}.

\end{proof}

\begin{lemma} \label{lemma:dbDiffphi} For any
  $n\ge0$ and $p\ge2$, it holds that 
  \begin{equation*}
	\begin{split}
	& \big\|E_{P_{\ell}}\big[\varphi(\vHat_{n}^{\ell,\ff})-\varphi(\vHat_{n}^{\ell,\fc})
	    -\varphi(\vBarHat_{n}^{\ell,\ff})+\varphi(\vBarHat_{n}^{\ell,\fc})\big]\big\|_p
          \lesssim N_{\ell}^{-\beta/2}P_\ell^{-1/2}+P_\ell^{-1}.
	\end{split}
	\end{equation*}
\end{lemma}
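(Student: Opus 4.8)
The plan is to reduce the empirical average of the second-order (double) difference of $\varphi$ to an empirical average of the first-order object $D\varphi(\vBarHat_n^{\ell,\fc})$ acting on the particle double difference $\delta_n := \vHat_n^{\ell,\ff}-\vHat_n^{\ell,\fc}-\vBarHat_n^{\ell,\ff}+\vBarHat_n^{\ell,\fc}$, and then to propagate that first-order object backward in time through the dynamics down to the (vanishing) initial double difference, collecting only contributions of order $N_\ell^{-\beta/2}P_\ell^{-1/2}+P_\ell^{-1}$ along the way.

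First I would Taylor expand. Writing the double difference as $[\varphi(\vHat_n^{\ell,\ff})-\varphi(\vBarHat_n^{\ell,\ff})]-[\varphi(\vHat_n^{\ell,\fc})-\varphi(\vBarHat_n^{\ell,\fc})]$ and applying a second-order Taylor expansion to each bracket about $\vBarHat_n^{\ell,\ff}$ and $\vBarHat_n^{\ell,\fc}$, respectively, I would isolate the leading term $D\varphi(\vBarHat_n^{\ell,\fc})\delta_n$ and a remainder
\[
R_n = \big(D\varphi(\vBarHat_n^{\ell,\ff})-D\varphi(\vBarHat_n^{\ell,\fc})\big)(\vHat_n^{\ell,\ff}-\vBarHat_n^{\ell,\ff}) + \tfrac12\big(\text{quadratic in } \vHat_n^{\ell,\cdot}-\vBarHat_n^{\ell,\cdot}\big).
\]
Using $\varphi \in \bF \subset C^2_P$, the moment bounds $\vBarHat_n^{\ell,\cdot}\in\cap_{r\ge2}L^r$, Hölder's and Jensen's inequalities, Lemma~\ref{lemma:distMFens} (which gives $\|\vBarHat_n^{\ell,\ff}-\vBarHat_n^{\ell,\fc}\|_{r} \lesssim N_\ell^{-\beta/2}$) and Corollary~\ref{distEns} (which gives $\|\vHat_n^{\ell,\cdot}-\vBarHat_n^{\ell,\cdot}\|_{r}\lesssim P_\ell^{-1/2}$), I would bound $\|E_{P_\ell}[R_n]\|_p \lesssim N_\ell^{-\beta/2}P_\ell^{-1/2}+P_\ell^{-1}$, so that it suffices to bound $\|E_{P_\ell}[D\varphi(\vBarHat_n^{\ell,\fc})\delta_n]\|_p$.

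For that term I would invoke Lemma~\ref{lemma:phiDoubleUpdate}, which replaces the update double difference $\delta_n$ by the prediction double difference $v_n^{\ell,\ff}-v_n^{\ell,\fc}-\bar v_n^{\ell,\ff}+\bar v_n^{\ell,\fc}$ carrying the prefactor $D\varphi(\vBarHat_n^{\ell,\fc})(I-K_n^{\ell,\ff}H)=D\varphi(\vBarHat_n^{\ell,\fc})(I-K_n^{\ell,\ff}H)A_{n,n-1}^\ell$, modulo the two auxiliary empirical-average terms and $O(N_\ell^{-\beta/2}P_\ell^{-1/2}+P_\ell^{-1})$. This is precisely the left-hand side of~\eqref{eq:induction1} with $k=n-1$. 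I would then run the simultaneous backward-in-time recursion of Lemma~\ref{lemma:Induction}: the mean-value expansion~\eqref{eq:meanValueTheorem} and Corollary~\ref{corol:UpdPred} convert the prediction double difference at time $k+1$ (with matrix $A_{k+1,n-1}^\ell$) into the one at time $k$ (with $A_{k,n-1}^\ell$), while~\eqref{eq:induction2} and~\eqref{eq:induction3} control the accompanying empirical averages of the plain and $(\bar v_r^{\ell,\ff})^\bT$-postfactored prediction double differences by the same recursive mechanism. Each step lowers the time index by one and accumulates a term of order $N_\ell^{-\beta/2}P_\ell^{-1/2}+P_\ell^{-1}$; the base case is $k=0$, where all three families of quantities vanish because $\vHat_0^{\ell,\ff}=\vHat_0^{\ell,\fc}=\vBarHat_0^{\ell,\ff}=\vBarHat_0^{\ell,\fc}$ forces every double difference to be identically zero.

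Since the time horizon $n$ is fixed, the recursion terminates after finitely many steps, so only finitely many error contributions---each $\lesssim N_\ell^{-\beta/2}P_\ell^{-1/2}+P_\ell^{-1}$---are summed, and the hidden constant is allowed to depend on $n$, as in the statement. The one bookkeeping subtlety I would flag is that the $L^p$-exponent doubles at each application of~\eqref{eq:induction1} (its right-hand sides are measured in $\|\cdot\|_{2p}$); this is harmless because after $n$ steps one only needs $L^{2^n p}$ bounds, which are available since all mean-field particles lie in $\cap_{r\ge2}L^r$. The main obstacle is therefore not any single estimate but the orchestration of the three interlocking recursions~\eqref{eq:induction1}--\eqref{eq:induction3}, ensuring that none of the auxiliary terms (the plain averages and those with a $(\bar v_r^{\ell,\ff})^\bT$ factor) feeds back a slower rate than $N_\ell^{-\beta/2}P_\ell^{-1/2}+P_\ell^{-1}$ as they are telescoped down to the vanishing initial condition.
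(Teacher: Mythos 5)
Your proposal is correct and follows essentially the same route as the paper's proof: a mean-value/Taylor expansion (as in~\eqref{eq:meanValueTheorem}) to isolate $E_{P_\ell}[D\varphi(\vBarHat_n^{\ell,\fc})(\vHat_n^{\ell,\ff}-\vHat_n^{\ell,\fc}-\vBarHat_n^{\ell,\ff}+\vBarHat_n^{\ell,\fc})]$ with remainder $\lesssim N_\ell^{-\beta/2}P_\ell^{-1/2}+P_\ell^{-1}$, then Lemma~\ref{lemma:phiDoubleUpdate} followed by the iterated application of Lemma~\ref{lemma:Induction} down to the vanishing initial double difference. Your bookkeeping remarks (the $2^k p$-norm doubling being harmless for fixed $n$, and the finitely many accumulated error terms) match the structure of the paper's argument exactly.
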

\begin{proof}
By assumption~\ref{ass:psi}(ii) and a similar application of the mean-value theorem as in~\eqref{eq:meanValueTheorem},

\[
\begin{split}
\big\|E_{P_{\ell}}\big[&\varphi(\vHat_{n}^{\ell,\ff})-\varphi(\vHat_{n}^{\ell,\fc}) 
	    -\varphi(\vBarHat_{n}^{\ell,\ff})+\varphi(\vBarHat_{n}^{\ell,\fc})\big]\big\|_p \\
& \lesssim \big\|E_{P_{\ell}}\big[D \varphi( \vBarHat_{n}^{\ell,\fc})(\vHat_{n}^{\ell,\ff}-\vHat_{n}^{\ell,\fc}-\vBarHat_{n}^{\ell,\ff}+\vBarHat_{n}^{\ell,\fc})\big]\big\|_p +N_{\ell}^{-\beta/2}P_\ell^{-1/2}+P_\ell^{-1}.
\end{split}
\]

By Lemma~\ref{lemma:phiDoubleUpdate}, and thereafter using that $A^\ell_{n,n-1}=A^\ell_{n-1,n-2}=I$ and
applying Lemma~\ref{lemma:Induction}, we obtain
\begin{equation*} 
  \begin{split}
    &\cK_\ell:= \big\|E_{P_{\ell}}\big[D \varphi( \vBarHat_{n}^{\ell,\fc})(\vHat_{n}^{\ell,\ff}-\vHat_{n}^{\ell,\fc}-\vBarHat_{n}^{\ell,\ff}+\vBarHat_{n}^{\ell,\fc})\big]\big\|_p\\
    & \lesssim \Big\|E_{P_{\ell}}\Big[ D\varphi(\vBarHat_{n}^{\ell,\fc}) (I-K_n^{\ell,\ff} H)A^\ell_{n,n-1}( v_{n}^{\ell,\ff}-v_{n}^{\ell,\fc}-\bar{v}_{n}^{\ell,\ff}+\bar{v}_{n}^{\ell,\fc}) \Big]\Big\|_{p}\\
    &+\big\| E_{P_\ell}[A^\ell_{n,n-1}(v_{n}^{\ell,\ff}-v_{n}^{\ell,\fc}-\bar{v}_{n}^{\ell,\ff}+\bar{v}_{n}^{\ell,\fc})( \bar{v}_{n}^{\ell,\ff})^\bT]\big\|_{2p}
    +\big\|E_{P_\ell}[A^\ell_{n,n-1}(v_{n}^{\ell,\ff}-v_{n}^{\ell,\fc}-\bar{v}_{n}^{\ell,\ff}+\bar{v}_{n}^{\ell,\fc})]\big\|_{2p}\\
&+P_{\ell}^{-1/2}N_{\ell}^{-\beta/2}+P_{\ell}^{-1}\\
    & \lesssim \big\|E_{P_{\ell}}\big[ D\varphi(\vBarHat_{n}^{\ell,\fc}) (I-K_n^{\ell,\ff} H) A_{n-1,n-1}^{\ell}( v_{n-1}^{\ell,\ff}-v_{n-1}^{\ell,\fc}-\bar{v}_{n-1}^{\ell,\ff}+\bar{v}_{n-1}^{\ell,\fc}) \big]\big\|_{p}\\
    &+ \big\| E_{P_\ell}[A^\ell_{n-1,n-1}(v_{n-1}^{\ell,\ff}-v_{n-1}^{\ell,\fc}-\bar{v}_{n-1}^{\ell,\ff}+\bar{v}_{n-1}^{\ell,\fc})( \bar{v}_{n}^{\ell,\ff})^\bT]\big\|_{2p}\\
    &+ \big\|E_{P_\ell}[A^\ell_{n-1,n-1}(v_{n-1}^{\ell,\ff}-v_{n-1}^{\ell,\fc}-\bar{v}_{n-1}^{\ell,\ff}+\bar{v}_{n-1}^{\ell,\fc})]\big\|_{2p}\\
    &+ \big\|E_{P_\ell}\big[A^\ell_{n-1,n-2}(v_{n-1}^{\ell,\ff}-v_{n-1}^{\ell,\fc}-\bar{v}_{n-1}^{\ell,\ff}+\bar{v}_{n-1}^{\ell,\fc})( \bar{v}_{n-1}^{\ell,\ff})^\bT\big]\big\|_{4p}\\
    &+ \big\| E_{P_\ell}\big[ A^\ell_{n-1,n-2}(v_{n-1}^{\ell,\ff}-v_{n-1}^{\ell,\fc}-\bar{v}_{n-1}^{\ell,\ff}+\bar{v}_{n-1}^{\ell,\fc}) \big] \big\|_{4p}
    +P_{\ell}^{-1/2}N_{\ell}^{-\beta/2}+P_{\ell}^{-1}.
  \end{split}
\end{equation*}
Recalling that $\vHat_{0}^{\ell,\ff}-\vHat_{0}^{\ell,\fc}-\vBarHat_{0}^{\ell,\ff}+\vBarHat_{0}^{\ell,\fc}=0$
and applying Lemma~\ref{lemma:Induction} iteratively $n-1$ times, we obtain that
\begin{equation*} 
  \begin{split}
    \cK_\ell &\lesssim \big\|E_{P_{\ell}}\big[ D\varphi(\vBarHat_{n}^{\ell,\fc}) (I-K_n^{\ell,\ff} H) A_{1,n-1}^{\ell}D \Psi^{N_\ell}_0(\vBarHat_{0}^{\ell,\fc}) ( \vHat_{0}^{\ell,\ff}-\vHat_{0}^{\ell,\fc}-\vBarHat_{0}^{\ell,\ff}+\vBarHat_{0}^{\ell,\fc}) \big]\big\|_{p}\\
    &+ \sum_{k=1}^{n-1}\Big\{ \big\| E_{P_\ell}[A^\ell_{1,n-k} D \Psi^{N_\ell}_0(\vBarHat_{0}^{\ell,\fc}) ( \vHat_{0}^{\ell,\ff}-\vHat_{0}^{\ell,\fc}-\vBarHat_{0}^{\ell,\ff}+\vBarHat_{0}^{\ell,\fc})( \bar{v}_{n}^{\ell,\ff})^\bT]\big\|_{2^kp}\\
    &    +  \big\|E_{P_\ell}[A^\ell_{1,n-k}D \Psi^{N_\ell}_0(\vBarHat_{0}^{\ell,\fc}) ( \vHat_{0}^{\ell,\ff}-\vHat_{0}^{\ell,\fc}-\vBarHat_{0}^{\ell,\ff}+\vBarHat_{0}^{\ell,\fc})]\big\|_{2^kp}\Big\} +N_{\ell}^{-\beta/2}P_\ell^{-1/2}+P_\ell^{-1}\\
    &\lesssim N_{\ell}^{-\beta/2}P_\ell^{-1/2}+P_\ell^{-1}.
  \end{split}
\end{equation*}

\end{proof}

\subsection{Proof of Corollary~\ref{cor:mlenkf}}
\label{sec:mlenkfCost}

By Theorem~\ref{thm:mlenkfConv}, we recall that 
\[
\|\mu_n^{\ML}[\varphi] - \overline \mu_n[\varphi]\|_p \lesssim
N_L^{-\beta/2}P_L^{-1/2} + P_L^{-1} +N_L^{-\alpha}+\sum_{\ell=0}^L M_\ell^{-1/2}(N_\ell^{-\beta/2}P_\ell^{-1/2}+P_\ell^{-1}),
\]
with $P_\ell \eqsim 2^{\ell}$, $N_\ell \eqsim 2^{s \ell}$ for some $s>0$.
Our objective is to prove that the parameter choices for $s$, $L$, $\{M_\ell\}$
stated in Corollary~\ref{cor:mlenkf} ensure that the goal
\begin{equation*}\label{eq:approxAccuracy}
\|\mu_n^{\ML}[\varphi] - \overline \mu_n[\varphi]\|_p \lesssim \epsilon
\end{equation*}
is reached at the asymptotic computational cost~\eqref{eq:mlenkfCosts}, where
\[
\text{Cost(MLEnKF)} := \sum_{\ell=0}^L M_\ell N_\ell P_\ell.
\]

Fix the value of $s>0$. It then
follows straightforwardly that to control the ``bias''
\[
N_L^{-\beta/2}P_L^{-1/2} + P_L^{-1} +N_L^{-\alpha} \lesssim \epsilon,
\]
one must have $L \eqsim \log_2(\epsilon^{-1})/\min(1, (1+\beta s)/2, \alpha s) + 1$.
It remains to minimize $\sum_{\ell=0}^L M_\ell N_\ell P_\ell$
subject to the constraint
\begin{equation}\label{eq:costDef}
\sum_{\ell=0}^L M_\ell^{-1/2}(N_\ell^{-\beta/2}P_\ell^{-1/2}+P_\ell^{-1}) \lesssim \epsilon,
\end{equation}
and also having in mind that $M_\ell$ must be a natural number for all $\ell \le L$.

The method of Lagrange multipliers applied to 
\[
\mathcal{L}(\{M_\ell \}, \lambda) := \sum_{\ell =0}^{L} M_\ell N_\ell P_\ell + \lambda (\sum_{\ell=0}^{L} M_\ell^{-1/2} D_\ell - \epsilon)
\]
with $D_\ell :=(N_\ell^{-\beta/2}P_\ell^{-1/2}+P_\ell^{-1})$ yields
\begin{equation}\label{eq:asymptoticMl}
M_\ell \eqsim \lambda^{2/3} (N_\ell P_\ell)^{-2/3}D_\ell^{2/3} +1 \quad \text{and} \quad
\lambda = \epsilon^{-3} \prt{\sum_{\ell=0}^{L} (N_\ell P_\ell)^{1/3}D_\ell^{2/3}}^3.
\end{equation}
Since
\[
P_\ell N_\ell \eqsim 2^{(1+s)\ell} \text{ and }  D_\ell \eqsim 2^{-(\min({\beta s,1})+1)\ell/2},
\]
we have
\[
\sum_{\ell=0}^{L} (N_\ell P_\ell)^{1/3}D_\ell^{2/3}
\eqsim \sum_{\ell=0}^{L}  2^{(s-\min (\beta s,1))\ell/3}
\eqsim \begin{cases}
  1 & \text{if } \min(\beta s,1)>s, \\
 L & \text{if }  \min(\beta s,1)=s,\\
2^{(s-\min(\beta s, 1))L/3} & \text{if } \min(\beta s,1)<s,
\end{cases}
\]
and the optimal formula for $\{M_\ell\}$ for a fixed $s>0$,
cf.~\eqref{eq:chooseMlr}, follows from the last equality and~\eqref{eq:asymptoticMl}.

By~\eqref{eq:costDef}, the choice for $\{M_\ell\}$ leads to
\begin{equation*}\label{eq:mlenkfCosts2}
\mathrm{Cost(MLEnKF)} \eqsim
\begin{cases}
\epsilon^{-2}+\epsilon^{-f(s)} & \text{if } \min(\beta s,1)>s,\\              
\epsilon^{-2} L^3+\epsilon^{ -f(s)} & \text{if } \min(\beta s,1)=s,\\
\epsilon^{-2\mathlarger{-\frac{s-\min(\beta s, 1)}{\min(1, (\beta s +1)/2, \alpha s)}}}+
\epsilon^{-f(s)} & \text{if } \min(\beta s,1)<s,
\end{cases}
\end{equation*}
where
\begin{equation*}\label{eq:fDef}
f(s):= \frac{1+s}{ \min(1, (\beta s +1)/2, \alpha s)} =
                    \begin{cases}
                      \vspace*{0.1cm}
                      \mathlarger{\frac{1+s}{\alpha s}} &  \text{if } s \le \frac{\min((1+\beta s)/2,1)}{\alpha},\\
                      \mathlarger{\frac{2(1+s)}{\beta s +1}} & \text{if } \frac{(1+\beta s)}{2\alpha}<  s < \alpha^{-1},\\
                      1+s & \text{if } \min((1+\beta s)/2, \alpha s) \ge 1.
                    \end{cases}
\end{equation*}
We next consider the problem of determining the value/inclusion set of $s$ which
minimizes the asymptotic growth rate of $\mathrm{Cost(MLEnKF)}$. We consider three cases separately:

{\bf 1.} If $\beta<1$, then $\min(\beta s,1)<s$ for all $s>0$, cf.~Figure~\ref{fig:AllCond}(a), and
\[
\min(1, (\beta s +1)/2, \alpha s) \leq \frac{1+\min\prt{1, \beta s}}{2},
\]
implies that
\[
2 + \frac{s-\min(\beta s, 1)}{\min(1, (\beta s +1)/2, \alpha s)} \le
f(s).
\]
\begin{figure}[hbt!]
	\makebox[0pt]{\includegraphics[width=1.3\textwidth ]{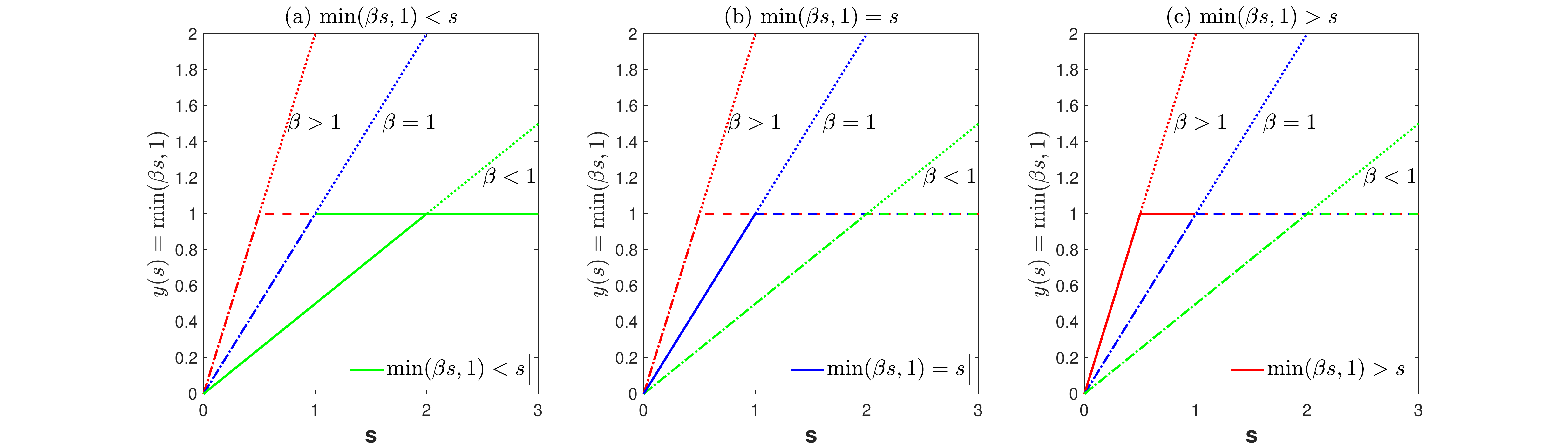}}
	\captionsetup{width=1.0\textwidth, font=footnotesize}
	\caption{ (a) The inequality $\min(\beta s,1)<s$ (green line). (b) The equality $\min(\beta s,1)=s$ (blue line). (c) The inequality $\min(\beta s,1)>s$ (red line).  The dash lines correspond to the function $y(s)=\min(\beta s, 1)$ and the dotted lines refer to the function $y(s)=\beta s$ varying by different cases of $\beta$ value.}
	\label{fig:AllCond}
\end{figure}
Consequently,
\[
\mathrm{Cost(s)} \eqsim \epsilon^{\mathlarger{-\frac{1+s}{ \min(1, (\beta s +1)/2, \alpha s)}}}.
\]
Observing that $f$ is strictly decreasing on the set $(s < \frac{\min((1+\beta s)/2,1)}{\alpha})$
and strictly increasing on $(s > \frac{\min((1+\beta s)/2,1)}{\alpha})$, we obtain the unique minimizer
\begin{equation}\label{eq:fArgMin}
\arg \min_{s >0} f(s) =
\begin{cases}
  \alpha^{-1} & \text{if } \alpha \le \beta,\\
  (2\alpha-\beta)^{-1} & \text{otherwise}.
\end{cases}
\end{equation}

{\bf 2.} If $\beta=1$, then $\min(\beta s,1)=s$ for all $s \in (0,1]$ and $\min(\beta s,1)<s$ for $s>1$,
cf.~Figure~\ref{fig:AllCond}(a-b). Consequently,
\[
\mathrm{Cost(s)} \eqsim
\begin{cases}
  \epsilon^{-2} L^3+\epsilon^{ -f(s)} & \text{if } s \in (0,1],\\
                    \epsilon^{-f(s)} & \text{if } s >1.
\end{cases}
\]
By~\eqref{eq:fArgMin}, now simplifying to $f(s)= \max(1+s,2,(1+s)/(\alpha s) )$,
we obtain that
\[
\mathrm{Cost(s)} \eqsim
\begin{cases}
\epsilon^{-2} L^3 & \text{if } \alpha \ge 1 \quad \& \quad s \in [\alpha^{-1},1]\\
\epsilon^{-(1+\alpha^{-1})} & \text{if } \alpha < 1 \quad \& \quad s = \alpha^{-1}.
\end{cases}
\]                      
                      
{\bf 3.} If $\beta>1$, then $\min(\beta s,1)>s$ for all $s \in (0,1)$ and $\min(\beta s,1)=s$ for $s=1$,
and $\min(\beta s,1)<s$ for $s>1$. 
Thus
\[
\mathrm{Cost(s)} \eqsim
\begin{cases}
  \epsilon^{-2} +\epsilon^{ -f(s)} & \text{if } s\in (0,1),\\
  \epsilon^{-2} L^3+\epsilon^{ -f(s)} & \text{if } s=1,\\
  \epsilon^{-f(s)} & \text{if } s >1.
\end{cases}
\]
If $\alpha >1$, then $f(s) \le 2 \iff s \in [\alpha^{-1},1]$.
If $\alpha =1$, then $f(s) \le 2 \iff s = 1$. And if $\alpha < 1$, then
\[
f(s) = \min_{s >0} f(s) = (1+\alpha^{-1}) \iff s = \alpha^{-1}.
\]
This yields
\[
\mathrm{Cost(s)} \eqsim
\begin{cases}
\epsilon^{-2} & \text{if } \alpha >1 \quad \& \quad s \in [\alpha^{-1},1)\\
  \epsilon^{-2} L^3 & \text{if } \alpha= 1 \quad \& \quad s =1\\
\epsilon^{-(1+\alpha^{-1})} & \text{if } \alpha < 1 \quad \& \quad s = \alpha^{-1}.
\end{cases}
\]         
             
\section{DMFEnKF algorithm}
\label{sec:dmfenkf}
This section describes the algorithm for the density-based
deterministic approximation of the MFEnKF, which iteratively computes
the prediction density $\rho_{\vBar_{n}}$ and updated density
$\rho_{\hat{\vBar}_{n}}$ for $n=1,2, \dots$. Each iteration cycle
consists of two steps: one transition from $\rho_{\hat{\vBar}_{n}}$ to
$\rho_{\vBar_{n+1}}$ governed by the Fokker-Planck equation (FPE) and
another from $\rho_{\vBar_{n+1}}$ to $\rho_{\hat{\vBar}_{n+1}}$ via an
affine transformation and a convolution with a Gaussian density. For
simplicity, we show the algorithm for the one-dimensional state-space
case, i.e, $d=1$.

Let $\mathcal{S}^t \rho$ denote a solution at time $t$ of the FPE
\begin{equation*}
\partial_t p(x,t) = \partial_x(V'(x) p(x,t))+\frac{\sigma^2}{2}\partial^2_x p(x,t), \qquad  (x,t) \in \bR \times (0,\infty),
\end{equation*}
with the initial condition $p(\cdot ,0)=\rho$. Note that for  
the mean-field dynamics~\eqref{mf:prediction} with $N= \infty$
that satisfies the SDE~\eqref{sde:genform}, it holds that $\Psi(\hat{\vBar}_n) \sim \mathcal{S}^{1}\rho_{\hat{\vBar}_n}$ for any $n\ge0$.
 

Since the updated mean-field ensembles can be viewed as the sum of 
the following independent random variables
 \[\hat \vBar_{n}=\underbrace{(I-\bar K_{n} H)\vBar_{n}+\bar K_{n}{y}_{n}}_\text{X}+\underbrace{\bar K_{n}\tilde \eta_{n}}_\text{Y},
 \] 
 the updated density can be written
\[
\rho_{\hat \vBar_{n}}(x)=\rho_{X+Y}(x)= \int_{\bR}\rho_X(z)\rho_Y(x-z)dz=\rho_X\ast \rho_Y (x),
\]
where $\ast$ denotes the convolution operator.

\begin{algorithm}[H]
	\DontPrintSemicolon
	
	\KwInput{The initial updated density $\rho_{\hat \vBar_{0}}=\rho_{u_0|Y_0}$, the number of time steps $N_t$, the number of spatial steps $N_x$, the discretization interval $[x_0, x_1]$, the simulation length $\mathcal{N}$.}
	\KwOutput{The prediction and updated density, $\rho_{\vBar_{n}}$ and $\rho_{\hat{\vBar}_{n}}$, respectively.}
    $\Delta t=\frac{1}{N_t}$, $\Delta x=\frac{x_1-x_0}{N_x}$.
	
	\For{n=1 : $\mathcal{N}$}
	{  
		Compute the prediction density $\rho_{\vBar_{n}}(x) = \mathcal{S}^1 \rho_{\hat \vBar_{n-1}}$ by Crank-Nicolson numerical method with the discretization steps ($\Delta t, \Delta x$).
		
		Compute the prediction covariance $\bar C_n=\int x^2\rho_{\vBar_{n}}(x) dx-(\int x\rho_{\vBar_{n}}(x) dx)^2$ using a quadrature rule.
		
		Compute the Kalman gain $\bar K_{n}=\bar C_n H^{\bT}(H\bar C_n H^{\bT} + \Gamma)^{-1}$.
		
		Compute the updated density $\rho_{\hat \vBar_{n}}=\rho_X\ast \rho_Y $ by 
		discrete convolution of the two functions represented on the spatial mesh.

	}
	\caption{DMFEnKF}
\end{algorithm}
\textit{Remark.} The discretization interval $[x_0, x_1]$ must be
chosen such that the truncation error pertaining to the integral on
the complement of said interval is close to zero, i.e.,
\[
\int_{[x_0,x_1]^c} \rho_{\hat \vBar_{n}}(x) dx \approx 0.
\]
For the problem in Section~\ref{ssec:dw}, we found by numerical
experiments using the Crank-Nicolson method that $[x_0,x_1]=[-5,5]$,
$\Delta t = 10^{-3}$ and $\Delta x = 10^{-5}$ were suitable resolution
parameters to obtain solutions with negligible approximation errors
relative to the statistical and bias error introduced by EnKF and
MLEnKF.

\section{Extension from MLEnKF to multi-index EnKF (MIEnKF)}\label{app:extension}
Following the construction of multi-index Monte Carlo mehods for mean-field
dynamics approximations~\cite{abdo2018}, we sketch an extension from
MLEnKF to MIEnKF. On the particle level, this extension may be viewed
as an extension from two-particle coupling to four-particle coupling.
Let for simplicity $N_{\ell_1}\eqsim 2^{\ell_1}$ and $P_{\ell_2}\eqsim 2^{\ell_2}$ and
introduce the following MIEnKF estimator 
\[
\mu^{MIEnKF}_n[\varphi] := \sum_{ (\ell_1, \ell_2)  \in \cI}
\sum_{m=1}^{M_{\ell_1,\ell_2}} \frac{\Delta_{\ell_1,\ell_2} \mu_n^m[\varphi]}{M_{\ell_1,\ell_2}},
\]
where the index-set $\cI \subset \bN_0^2$ that the estimator is summing over is a function of the accuracy $\epsilon>0$
and $\{M_{\ell_1,\ell_2}\}_{(\ell_1,\ell_2) \in \cI}$
denotes the number of four-coupled iid samples of EnKF estimators on level
$(\ell_1,\ell_2)$:
\[
\begin{split}
  \Delta_{\ell_1,\ell_2} \mu_n^m [\varphi] := &\Bigg(\mu_n^{N_{\ell_1}, P_{\ell_2},m} - \Big(\mu_n^{N_{\ell_1}, P_{\ell_2-1},1,m} + \mu_n^{N_{\ell_1}, P_{\ell_2-1},2,m}\Big)/2\\
  &\qquad - \mu_n^{N_{\ell_1-1}, P_{\ell_2},m} + \Big(\mu_n^{N_{\ell_1-1}, P_{\ell_2-1},1,m} + \mu_n^{N_{\ell_1-1}, P_{\ell_2-1},2,m}\Big)/2 \Bigg)[\varphi].
\end{split}
\]
Tentative numerical tests for both filtering problems considered in Section~\ref{sec:numerics}
yield that
\begin{equation*}
  \begin{split}
    \abs{\E[\mathbf{\Delta}_{(\ell_1, \ell_2)} \mu_n^m[\varphi]] } & \lesssim N_{\ell_1}^{-1} P_{\ell_2}^{-1},\\
      \norm{\mathbf{\Delta}_{(\ell_1, \ell_2)} \mu_n^m[\varphi]]}_p &\lesssim N_{\ell_1}^{-1}P_{\ell_2}^{-1}.
    \end{split}
  \end{equation*}
On the basis of these rates, one may optimally determine the index set
$\cI$ and $\{M_{\ell_1,\ell_2}\}_{(\ell_1,\ell_2) \in \cI}$,
cf.~\cite{abdo2016}. A detailed description and performance study
of the MIEnKF method is left as future work.

\medskip

{\bf Acknowledgements } This work was supported by the KAUST Office of
Sponsored Research (OSR) under Award No. URF/1/2584-01-01 and the
Alexander von Humboldt Foundation. G.Shaimerdenova and R. Tempone are
members of the KAUST SRI Center for Uncertainty Quantification in
Computational Science and Engineering.

\bibliography{manuscript}
\bibliographystyle{plain}
\end{document}